\newcommand{\comm}[1]{}
\numberwithin{equation}{section}
\theoremstyle{plain}
\newtheorem{thm}{Theorem}[section]
\newtheorem{lemma}[thm]{Lemma}
\newtheorem{prop}[thm]{Proposition}
\newtheorem{cor}[thm]{Corollary}
\newtheorem*{claims}{Claim}
\theoremstyle{definition}
\newtheorem{defi}[thm]{Definition}
\newtheorem{rem}[thm]{Remark}
\newtheorem{rems}[thm]{Remarks}
\newtheorem{exa}[thm]{Example}
\newtheorem{nota}[thm]{Notation}
\newtheorem{hypo}[thm]{Hypotheses}
\newtheorem{que}[thm]{Question}
\newcommand{\R}{\mathbb{R}}
\newcommand{\C}{\mathbb{C}}
\newcommand{\T}{\mathbb{T}}
\newcommand{\D}{\mathbb{D}}
\newcommand{\CC}{\mathcal{C}}
\newcommand{\CD}{\mathcal{D}}
\newcommand{\CE}{\mathcal{E}}
\newcommand{\CF}{\mathcal{F}}
\newcommand{\CJ}{\mathcal{J}}
\newcommand{\CH}{\mathcal{H}}
\newcommand{\CL}{\mathcal{L}}
\newcommand{\CM}{\mathcal{M}}
\newcommand{\CR}{\mathcal{R}}
\newcommand{\CW}{\mathcal{W}}
\newcommand{\FD}{\mathfrak{D}}
\newcommand{\al}{\alpha}
\newcommand{\be}{\beta}
\newcommand{\ga}{\gamma}
\newcommand{\de}{\delta}
\newcommand{\om}{\omega}
\newcommand{\la}{\lambda}
\newcommand{\ep}{\varepsilon}
\newcommand{\si}{\sigma}
\newcommand{\La}{\Lambda}
\newcommand{\ka}{\varkappa}
\newcommand{\vp}{\varphi}
\renewcommand{\epsilon}{\varepsilon}
\renewcommand{\ss}{\subset}
\newcommand{\tn}[1]{\textnormal{#1}}
\newcommand{\ol}[1]{\overline{#1}}
\newcommand{\Space}{\CL}
\newcommand{\abs}[1]{| #1 |}
\newcommand{\Abs}[1]{\left| #1 \right|}
\newcommand{\norm}[1]{\left\lVert #1 \right\rVert}
\newcommand{\weak}[1]{\operatorname{Adm}_{#1}^{w}}
\newcommand{\pweak}[1]{\mathcal{C}_{#1}^{w}}
\newcommand{\wt}[1]{\widetilde{#1}}
\newcommand{\wh}[1]{\widehat{#1}}
\newcommand{\Ran}{\operatorname{Ran}}
\date{\today}
\title{Operator Inequalities,  Functional Models and Ergodicity}
\author{Luciano Abadias}
\address{Luciano Abadias \newline
Departamento de Matem\'aticas, \newline
Instituto Universitario de
Matem\'aticas y Aplicaciones, \newline
Universidad de Zaragoza, \newline
50009 Zaragoza, Spain
}
\email{labadias@unizar.es}
\author{Glenier Bello}
\address{Glenier Bello\newline
Departamento de Matem\'aticas,\newline
Universidad Aut\'onoma de Madrid,\newline
Cantoblanco, 28049 Madrid, Spain,\newline
and Instituto de Ciencias Matem\'aticas (CSIC-UAM-UC3M-UCM)}
\email{glenier.bello@uam.es}
\author{Dmitry Yakubovich}
\address{D. V. Yakubovich\newline
Departamento de Matem\'aticas,\newline
Universidad Aut\'onoma de Madrid,\newline
Cantoblanco, 28049 Madrid, Spain\newline
and Instituto de Ciencias Matem\'aticas (CSIC-UAM-UC3M-UCM)}
\email {dmitry.yakubovich@uam.es}
\begin{document}

\begin{abstract}
We discuss when an operator, subject to
a rather general inequality in hereditary form,
admits a unitarily equivalent
functional model of Agler type in the reproducing kernel Hilbert space
associated to the inequality. To the contrary to the previous work,
the kernel need not be of Nevanlinna-Pick type. We derive some
consequences concerning the ergodic behavior of the operator.
\end{abstract}

\keywords{dilation; functional model; operator inequality; ergodic properties}

\maketitle

\section{Introduction}

\subsection{Motivation}
Let $\al(t)$ be a function representable by the power series
$\sum_{n=0}^{\infty} \al_n t^n$ in $\D := \{ \abs{t} < 1 \}$,
where  the coefficients $\al_n$ are real numbers,
and let $T \in L(H)$ be a bounded linear operator on a 
Hilbert\footnote{All Hilbert spaces will be assumed to be separable} 
space $H$.
Put
\begin{equation}
\label{ser-alpha}
\al(T^*,T) := \sum_{n=0}^{\infty} \al_n T^{*n} T^n,
\end{equation}
where the series is assumed to converge in the strong operator
topology SOT in $L(H)$.
When $\al$ is a polynomial, the series above is just a finite sum,
and there is no convergence problem.
In particular, when $\al(t) = 1-t$, the right hand side of
\eqref{ser-alpha} is $I-T^*T$, so $T \in L(H)$
is a contraction if and only if $(1-t)(T^*,T) \ge 0$.
In the 1960's Sz.-Nagy and Foias developed a beautiful spectral theory
of contractions (see \cite{NFBK10}) based on the construction of their
functional model.

In his landmark paper \cite{Agl82},
Agler showed that if $T$ has spectrum $\si(T)$ contained in the unit disc
$\D$ and $\al(T^*,T) \ge 0$, then it is natural to model
$T$ by parts of $B \otimes I_\CE$, where $B$ is a
suitable weighted
backward shift and $I_\CE$ is the
identity operator on some auxiliary  Hilbert space $\CE$.
(By \emph{a part of an operator} we mean its restriction to an invariant subspace.)
More generally, when $\si(T) \ss \ol{\D}$, it has been found
in various particular cases
that instead of $B\otimes I_\CE$
one should consider operators of the form $(B\otimes I_\CE)\oplus S$,
where $S$ is an isometry or a unitary operator.
This representation is called \textit{a coanalytic model}.
As Agler proved in \cite{Agl85}, it holds,
in particular, for
\emph{$m$-hypercontractions}, i.e., operators $T \in L(H)$ such that
$(1-t)^j (T^*,T) \ge 0$ for $j=1, \ldots , m$.
Agler's theorem was generalized in~\cite{MV93} by
M\"uller and Vasilescu to tuples of operators.
The first results on Agler model techniques are exposed in the
book~\cite{bookAglMcC2002} by Agler and McCarthy.
In \cite{Olo05}, Olofsson obtained operator formulas for wandering subspaces,
relevant in the models of $m$-hypercontractions. 
His results were generalized by Eschmeier in \cite{Esc18} to tuples of commuting operators, 
and by Ball and Bolotnikov in \cite{BB13} 
to what they call \emph{$\be$-hypercontractions}. 

M\"{u}ller studied the case where $\al = p$ is a polynomial in \cite{Mul88}.
He considers the class $\CC(p)$ of operators $T \in L(H)$ such that $p(T^*,T) \ge 0$.
He proves that any contraction $T\in\CC(p)$ has a coanalytic model
whenever $p(1) = 0, 1/p(t)$ is analytic in $\D$,
and $1/p(\bar{w}z)$ is a reproducing kernel. This last condition is equivalent to the fact
that all Taylor coefficients of $1/p(t)$ at the origin are positive.
M\"{u}ller also considers some operator inequalities for $T$ with infinitely many terms,
with the same property of positivity. This permits him to show that any operator $T$ is
unitarily equivalent to a part of a backward weighted shift with the same spectral radius
(see \cite[Corollary~2.3]{Mul88}).

In \cite{Olo15}, Olofsson deals with the case where $\al$ is not a polynomial.
His assumptions are that $\al$ is analytic on $\D$, does not vanish on $\D$, and $1/\al$ has
positive Taylor coefficients at the origin.
Under this setting, he studies contractions $T$ on $H$
such that $\al(rT^*,rT) \ge 0$ for every $r \in [0,1)$. With more assumptions,
he obtains the coanalytic model for this class of operators.

In \cite{BY18}, the last two authors considered functions $\al$ in the Wiener algebra $A_W$
of analytic functions in the unit disc with summable sequence of Taylor coefficients,
subject to certain conditions. It was assumed that
the series $\sum \al_n T^{*n}T^n$ converges in norm.
The operators studied there turn out to be similar to contractions
(see \cite[Theorem~I]{BY18}).
This will no longer be true in the setting of the present paper
(see Example~\ref{exa B_s a-contr not similar contr}).

In \cite{BY18}, an explicit  
model in the spirit of Sz.-Nagy and Foias model 
was constructed for 
the class of operators considered there.
The roles of the defect operator 
and the defect space were played by
\begin{equation}\label{eq defi D}
D := (\al(T^*,T))^{1/2}, \quad \FD := \ol{DH},
\end{equation}
where the non-negative square root is taken. 

\subsection{Our setting}
Here the operator $D$ and the space $\FD$, 
defined by \eqref{eq defi D} whenever $\al(T^*,T)\ge 0$, 
will also play an important role. 
Recall that now we consider the convergence of \eqref{ser-alpha} in SOT.
As it will be seen from Example~\ref{exa B_s a-contr not similar contr}, 
this is the appropriate convergence in this context.

Our assumptions are the following.

\begin{hypo}
\label{hypo-al}
Suppose $\al$ is a function in $A_W$ which
does not vanish on $\D$. We put
\[
k(t) = 1/ \al(t) = \sum_{n=0}^{\infty} k_n t^n
\qquad t \in \D,
\]
with $\al_0 = k_0 = 1$, and assume that $k_n>0$ for every $n \ge 1$.
\end{hypo}

Under Hypotheses~\ref{hypo-al}, we denote by $\CH_k$
the weighted Hilbert space of power series
$f(t)=\sum_{n=0}^\infty f_n t^n$
with finite norm
\[
\|f\|_{\CH_k} := \bigg(\sum_{n=0}^\infty  |f_n|^2 k_n \bigg)^{1/2}.
\]
Let $B_k$ be the
\emph{backward shift} on $\CH_k$, defined by
\begin{equation}
\label{def-B-k}
B_k f(t)= \dfrac{f(t) - f(0)}{t}.
\end{equation}

\begin{defi}\label{defi Agler-modelable}
Fix a function $\al$ satisfying Hypotheses~\ref{hypo-al},
and let $T$ be an operator in $L(H)$.
We say that $T$ is \emph{$\al$-modelable}
if $T$ is unitarily equivalent 
to a part of an operator of the
form $(B_k\otimes I_\CE)\oplus S$, where $S$ is an isometry.
\end{defi}

We remark that $B_k\otimes I_\CE$ acts on the Hilbert space
$\CH_k\otimes \CE$, which can be identified with the weighted
Hilbert space of $\CE$-valued power series
$f(t)=\sum_{n=0}^\infty f_n t^n$
with norm given by
\[
\|f\|_{\CH_k\otimes \CE} = \bigg(\sum_{n=0}^\infty  \|f_n\|_{\CE}^2 \, k_n \bigg)^{1/2}.
\]
It acts according to the same formula \eqref{def-B-k}.

It is natural to pose the following question.

\begin{que}\label{question}
Given a function $\al$ satisfying Hypotheses~\ref{hypo-al},
give a good sufficient condition for
an operator $T \in L(H)$ to be $\al$-modelable.
\end{que}

One of the strongest results in this direction is contained in the recent papers by 
Bickel, Hartz and McCarthy \cite{BHM18} 
and by Clou\^{a}tre and Hartz \cite{CH18}.
It is stated for spherically symmetric tuples
of operators. 
For the case of a single operator, their result can be formulated as follows.

\begin{thm}
[{\cite[Theorem 1.3]{CH18}}]
\label{thmBCHMc}
Let $\al$ be a function with $\al_0 = 1$ and  $\al_n \le 0$ for all $n\ge 1$.
Suppose that $k = 1/\al$
has radius of convergence $1$, $k_n >0$ for every $n \ge 0$ and
\begin{equation}\label{eq kn / kn+1}
\lim_{n \to \infty}  \frac {k_n}{k_{n+1}}=1.
\end{equation}
Then $B_k$ is bounded, 
and a Hilbert space operator $T$ is $\al$-modelable if and only if 
$\al(T^*,T)\ge 0$.      
\end{thm}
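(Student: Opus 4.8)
The plan is to prove the three assertions separately; the first two are quick, and the third is the heart of the matter. Since $\{t^n\}_{n\ge 0}$ is orthogonal in $\CH_k$ with $\norm{t^n}_{\CH_k}^2=k_n$, the operator $B_k$ is the weighted backward shift $t^n\mapsto t^{n-1}$, so $\norm{B_k t^n}^2/\norm{t^n}^2=k_{n-1}/k_n$; by \eqref{eq kn / kn+1} these ratios tend to $1$, hence are bounded, and $B_k$ is bounded. In the same basis, using $\sum_{n=0}^m\al_n k_{m-n}=\de_{m,0}$ (which is $\al k=1$ together with $\al_0=k_0=1$), one finds $\al(B_k^*,B_k)\,t^m=\big(k_m^{-1}\sum_{n=0}^m\al_n k_{m-n}\big)t^m$, so $\al(B_k^*,B_k)=P$, the orthogonal projection of $\CH_k$ onto the constants. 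Thus $\al\big((B_k\otimes I_\CE)^*,B_k\otimes I_\CE\big)=P\otimes I_\CE\ge 0$; and since $S^{*n}S^n=I$ for an isometry $S$, we get $\al(S^*,S)=\al(1)I$, with $\al(1)=1/k(1)\ge 0$ because $\al=1/k>0$ on $[0,1)$. Hence $\al(A^*,A)\ge 0$ for $A:=(B_k\otimes I_\CE)\oplus S$. If $T=A|_M$ is a part of $A$, then $\norm{T^n x}=\norm{A^n x}$ for $x\in M$; as $\al_n\le 0$ for $n\ge 1$, the partial sums $\sum_{n=1}^N\al_n T^{*n}T^n$ decrease, and their forms $\sum_{n=1}^N\al_n\norm{A^n x}^2$ are bounded below by $\ip{(\al(A^*,A)-I)x}{x}$, so by the monotone convergence theorem for self-adjoint operators $\al(T^*,T)$ converges in SOT and, in the quadratic form, equals $\ip{\al(A^*,A)x}{x}\ge 0$. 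This proves necessity.

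For sufficiency I would assume $\al(T^*,T)\ge 0$ and set $D:=\al(T^*,T)^{1/2}$, $\FD:=\ol{DH}$, and define $W\colon H\to\CH_k\otimes\FD$ by $(Wx)(t)=\sum_{n\ge 0}(DT^n x)t^n$. To see that $W$ is a well-defined contraction, fix $x$ and $N$ and note that the finite sum $\Delta_N:=\sum_{n=0}^N k_n T^{*n}\al(T^*,T)T^n$ is a bounded positive operator with $\ip{\Delta_N x}{x}=\sum_{n=0}^N k_n\norm{DT^n x}^2$. Writing each inner product as $\lim_{I\to\infty}\sum_{i=0}^I\al_i\norm{T^{n+i}x}^2$ and, for $I\ge N$, grouping the finite double sum according to $l=n+i$, the coefficient of $\norm{T^l x}^2$ is $\de_{l,0}$ when $l\le N$ and $\sum_{n=0}^N k_n\al_{l-n}$ when $l>N$; since every index $l-n$ with $l>N$ and $0\le n\le N$ lies in $\{1,\dots,l\}$, these coefficients are $\le 0$. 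Hence $\sum_{n=0}^N k_n\norm{DT^n x}^2\le\norm{x}^2$ for every $N$, so $Wx\in\CH_k\otimes\FD$ and $\norm{W}\le 1$. A term-by-term check with \eqref{def-B-k} gives the intertwining $(B_k\otimes I_\FD)W=WT$.

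Put $\vp(x):=\norm{Wx}^2=\sum_{n\ge 0}k_n\norm{DT^n x}^2$ and $Q:=I-W^*W\ge 0$. The decisive point is the defect identity $T^*QT=Q$, equivalently $\vp(x)-\vp(Tx)=\norm{x}^2-\norm{Tx}^2$. Using the intertwining, $\vp(x)-\vp(Tx)=\ip{\big((I-B_k^*B_k)\otimes I_\FD\big)Wx}{Wx}=\sum_{m\ge 0}(k_m-k_{m-1})\norm{DT^m x}^2$ (with $k_{-1}:=0$); expanding each $\norm{DT^m x}^2$ and using the two convolution relations $\sum_{m=0}^l k_m\al_{l-m}=\de_{l,0}$ and $\sum_{m=0}^{l-1}k_m\al_{l-1-m}=\de_{l,1}$ makes the coefficient of $\norm{T^l x}^2$ collapse to $\de_{l,0}-\de_{l,1}$, giving $\norm{x}^2-\norm{Tx}^2$. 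Granting $T^*QT=Q$, the assignment $Q^{1/2}x\mapsto Q^{1/2}Tx$ extends to an isometry $S$ of $\ol{Q^{1/2}H}$ into itself, and $U\colon H\to(\CH_k\otimes\FD)\oplus\ol{Q^{1/2}H}$, $Ux:=(Wx,Q^{1/2}x)$, is an isometry because $W^*W+Q=I$. Since $U$ intertwines $T$ with $(B_k\otimes I_\FD)\oplus S$, its range is invariant and $T$ is unitarily equivalent to the restriction of $(B_k\otimes I_\FD)\oplus S$ to $\Ran U$; that is, $T$ is $\al$-modelable.

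I expect the main obstacle to be precisely the justification of the defect identity $T^*QT=Q$. The contraction bound can be made rigorous from the sign of the coefficients (all relevant $\al_{l-n}$ are $\le 0$), but the defect identity rearranges an SOT-convergent series $\sum_{m,i}(k_m-k_{m-1})\al_i\norm{T^{m+i}x}^2$ whose weights $k_m-k_{m-1}$ have no definite sign, so the reordering is not licensed by the convergence alone. Equivalently, Abel summation only yields the rigorous identity $\sum_{m\ge 0}(k_m-k_{m-1})T^{*m}\al(T^*,T)T^m=W^*W-T^*W^*WT$, and identifying this with $I-T^*T$ amounts to showing that a boundary term at infinity vanishes. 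The route I would take is to first establish suitable control on the growth of the powers of $T$ (power-boundedness, or $r(T)\le 1$) from $\al(T^*,T)\ge 0$ together with \eqref{eq kn / kn+1}—the same hypothesis that, via the boundedness of $B_k$, makes $B_k\otimes I_\FD$ a legitimate model operator—and then use the summability $\al\in A_W$ to render the inner series absolutely convergent, thereby legitimizing the rearrangement. Once the defect identity is secured, the residual isometry and the assembly of the model follow formally as above.
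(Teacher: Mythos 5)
You should first be aware that the paper does not prove this theorem: it is imported verbatim from \cite{CH18}, so the only internal material to compare against is the auxiliary machinery of Sections~2 and~3 --- and your proposal reconstructs that machinery faithfully. Your necessity direction is correct and matches the paper's own tools: the identity $\al(B_k^*,B_k)=P$ (projection onto the constants) is Corollary~\ref{cor-Bk-in-pweak}\,(ii); to upgrade your diagonal computation to SOT convergence on all of $\CH_k$ you should add the uniform bound $\sum_{n=0}^{m}|\al_n|\,k_{m-n}=2k_m$ for $m\ge 1$ (from $\sum_{n=1}^{m}(-\al_n)k_{m-n}=k_m$), which is Hypotheses~\ref{hypo-adm} with $\ga_m\le 2k_m$ combined with Lemma~\ref{lemma T in Cal for backward and forward}; and note that $\sum_{n\ge 1}|\al_n|\le 1$ (let $t\to 1^-$ in $\al(t)>0$), which both legitimizes $\al(S^*,S)=\al(1)I$ and puts $\al$ in $A_W$ automatically. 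Closure of $\pweak{\al}$ under parts, direct sums and tensoring is Proposition~\ref{prop_immediate_properties_Calpha}. On the sufficiency side, your contraction bound for $V_D$ is, almost word for word, the paper's proof of Theorem~\ref{thm V_D is a contraction}, and your assembly of the model from (i) contractivity of $V_D$ and (ii) the defect identity is exactly Propositions~\ref{prop-gen-form-intertwining-op} and~\ref{prop-charzn-intertwining}.

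The genuine gap is the one you flag yourself --- the defect identity $\norm{x}^2-\norm{V_Dx}^2=\norm{Tx}^2-\norm{V_DTx}^2$ --- and your proposed repair cannot work. First, power-boundedness is unavailable: $\al(t)=(1-t)^a$ with $0<a<1$ satisfies every hypothesis of the theorem, yet $B_a\in\pweak{\al}$ with $\norm{B_a^m}^2\asymp (m+1)^{1-a}\to\infty$ (see \eqref{eq asymp Bs and Fs} and Example~\ref{exa B_s a-contr not similar contr}); the paper stresses such operators are not even similar to contractions. Second, absolute convergence of your rearranged double series requires $\sum_m |k_m-k_{m-1}|\sum_i|\al_i|\,\norm{T^{m+i}x}^2<\infty$; for an isometry in the critical case the inner sum equals $2\norm{x}^2$, so you would need $\{k_n\}$ to be of bounded variation, a condition nowhere implied by \eqref{eq kn / kn+1}. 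Third, and most tellingly, Abel summation here costs nothing and yields nothing: with $Q_N:=I-\sum_{n=0}^{N}k_nT^{*n}D^2T^n$ (which converges in SOT by your Step~A and Lemma~\ref{Halmos-lemma}, with limit $Q=I-V_D^*V_D$), one has the exact finite-$N$ identity
\begin{equation*}
\ip{Q_{N+1}x}{x}-\ip{Q_N Tx}{Tx}
=\norm{x}^2-\norm{Tx}^2-\norm{Dx}^2-\sum_{m=1}^{N+1}(k_m-k_{m-1})\norm{DT^mx}^2 ,
\end{equation*}
so the series you want to evaluate converges automatically and your collapsed-coefficient identity is \emph{precisely equivalent} to $T^*QT=Q$: no boundary term remains to be killed, and no formal rearrangement can decide it. (What \eqref{eq kn / kn+1} gives cheaply is only $k_N\norm{DT^{N+1}x}^2=\frac{k_N}{k_{N+1}}\,k_{N+1}\norm{DT^{N+1}x}^2\to 0$, since $\sum_n k_n\norm{DT^nx}^2<\infty$; as the display shows, this does not close the loop.) At this point \cite{CH18} uses a genuinely different argument, and without it your proposal establishes necessity, the boundedness of $B_k$, contractivity of $V_D$, and the correct reduction --- but not the theorem.
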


It is easy to see that the hypotheses of Theorem~\ref{thmBCHMc}
imply Hypotheses~\ref{hypo-al}.
This theorem concerns the \emph{Nevanlinna-Pick case}, that is, when $\al_0 = 1$ and $\al_n \le 0$ for $n \ge 1$.
Alternatively, we say that $k$ is a \emph{Nevanlinna-Pick kernel}.
In the recent work \cite{ClHartzSchillo2019}, Clou\^{a}tre, Hartz and Schillo
establish a Beurling--Lax--Halmos theorem for reproducing kernel Hilbert spaces
in the Nevanlinna-Pick context.
We refer the reader to
\cite{DougMisraSarkar2012, Olo08, Sar15-I, Sar15-II}
for more results in the Nevanlinna-Pick case.
In the recent preprint \cite{ET19}, Eschmeier and Toth extend 
previous results by Eschmeier \cite{Esc18} 
to all complete Nevanlinna-Pick kernels, in the context of operator tuples.

\subsection{Main results}
The following result gives 
a new answer to Question~\ref{question}.

\begin{thm}\label{Muller-type-thm}
Assume Hypotheses~\ref{hypo-al}.
If $k \in A_W$, and its Taylor coefficients $\{k_n\}$ satisfy
$k_n^{1/n} \to 1, \sup k_n / k_{n+1} < \infty$ and
\begin{equation}\label{cond-Nik-Embry-fuerte}
\lim_{m\to\infty}\sup_{n\ge 2m} \sum_{m\le  j\le n/2}
\dfrac{k_j k_{n-j}}{k_n}  =0,
\end{equation}
then $B_k$ is bounded, and the operator $T \in L(H)$ is a part of 
$B_k \otimes I_\CE$ (for some Hilbert space $\CE$) 
if and only if both
$\sum \abs{\al_n}T^{*n}T^n$ and $\sum k_n T^{*n}T^n$ converge in SOT
and $\al(T^*,T) \ge 0$.
Moreover, in this case one can take $\CE = \FD$.
\end{thm}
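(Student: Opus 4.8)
The plan is to handle the model operator $B_k$ first, then the two implications separately, isolating the genuinely hard point (isometry of the model map) and the genuinely hard estimate (a uniform convolution bound) at the end.

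First I would record the diagonal structure of $B_k$. In the orthonormal basis $\hat e_m = t^m/k_m^{1/2}$ of $\CH_k$ one has $B_k\hat e_m = (k_{m-1}/k_m)^{1/2}\hat e_{m-1}$, so $B_k$ is a weighted backward shift and its boundedness is exactly $\sup_n k_n/k_{n+1}<\infty$. For any non-negative sequence $\ga=\{\ga_n\}$ the operator $\sum_n \ga_n B_k^{*n}B_k^n$ is then diagonal, acting on $\hat e_m$ by the scalar $k_m^{-1}\sum_{n=0}^m \ga_n k_{m-n} = (\ga*k)_m/k_m$; the same holds on $\CH_k\otimes\CE$ tensored with $I_\CE$. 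Taking $\ga=\al$ and using $\al(t)k(t)=1$, i.e. $(\al*k)_m=\delta_{m0}$, gives $\al(B_k^*,B_k)=P_0$, the orthogonal projection onto the constants, which is $\ge0$. The first lemma I need is the uniform bound
\[
\sup_m \frac{(\ga*k)_m}{k_m}<\infty\qquad\text{for }\ga\in\{|\al|,\,k\},
\]
since these are increasing sequences of positive operators with bounded quadratic forms, so $\sum_n|\al_n|B_k^{*n}B_k^n$ and $\sum_n k_nB_k^{*n}B_k^n$ converge in SOT. Splitting $\sum_{n=0}^m \ga_n k_{m-n}$ into the ranges $n<M$, $M\le n\le m-M$, $n>m-M$, the two outer ranges are controlled by $\sup_n k_n/k_{n+1}<\infty$ together with $k,\al\in A_W$, while the central range is precisely what \eqref{cond-Nik-Embry-fuerte} forces to be uniformly small; this is where the main hypothesis enters.

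For the easy direction, suppose $V\colon H\to\CH_k\otimes\CE$ is an isometry with $VH$ invariant under $B_k\otimes I_\CE$ and $VT=(B_k\otimes I_\CE)V$. Then $VT^n=(B_k\otimes I_\CE)^nV$, so $\norm{T^nx}=\norm{(B_k\otimes I_\CE)^nVx}$ for all $x,n$, whence $\ip{\sum_{n\le N}\ga_nT^{*n}T^nx}{x}=\ip{\sum_{n\le N}\ga_n(B_k\otimes I_\CE)^{*n}(B_k\otimes I_\CE)^nVx}{Vx}$ for $\ga\ge0$. By the previous paragraph the right-hand side is bounded in $N$, so Banach--Steinhaus and monotonicity give SOT convergence of $\sum|\al_n|T^{*n}T^n$ and $\sum k_nT^{*n}T^n$; passing to the limit with $\ga=\al$ yields $\al(T^*,T)=V^*(P_0\otimes I_\CE)V\ge0$.

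For the converse I set $D=\al(T^*,T)^{1/2}$, $\FD=\ol{DH}$, and define $W\colon H\to\CH_k\otimes\FD$ by $Wx=\sum_{n\ge0}(DT^nx)\,t^n$. Since $\norm{Wx}^2=\sum_n k_n\norm{DT^nx}^2\le\norm{D}^2\sum_n k_n\norm{T^nx}^2$ and $\sum_n k_nT^{*n}T^n$ converges, $W$ is bounded, and the intertwining $WT=(B_k\otimes I_\FD)W$ is immediate from $(B_kf)(t)=(f(t)-f(0))/t$. The hard part is that $W$ is isometric. Writing $a_m=\norm{T^mx}^2$ and $\norm{DT^nx}^2=\ip{\al(T^*,T)T^nx}{T^nx}=\sum_j\al_j a_{n+j}$, a reindexing of the finite sum over $n\le N$ gives the exact identity
\[
\sum_{n=0}^N k_n\norm{DT^nx}^2 = a_0 + E_N,\qquad E_N=-\sum_{m>N}a_m\sum_{j=0}^{m-N-1}k_{m-j}\al_j,
\]
where $a_0=\norm{x}^2$ arises from $(\al*k)_m=\delta_{m0}$. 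It remains to show $E_N\to0$: bounding $\abs{E_N}\le\sum_{m>N}(k_ma_m)\,k_m^{-1}(|\al|*k)_m$ and invoking the uniform bound of the first paragraph gives $\abs{E_N}\le C\sum_{m>N}k_ma_m\to0$, since $\sum_m k_ma_m<\infty$. Hence $W$ is an isometry, $\Ran W$ is invariant under $B_k\otimes I_\FD$, and $T$ is unitarily equivalent to $(B_k\otimes I_\FD)\big|_{\Ran W}$, a part with $\CE=\FD$.

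The main obstacle I anticipate is the uniform convolution bound itself, and specifically its form with $\ga=|\al|$. The self-convolution estimate for $k*k$ follows cleanly, the central terms being exactly those absorbed by \eqref{cond-Nik-Embry-fuerte}; but in $(|\al|*k)_m/k_m$ the terms whose $\al$-index is close to $m$ are of size $\approx\abs{\al_m}/k_m$, so one is forced to compare the decay of $\al_n$ with that of $k_n$ through the relation $\al=1/k$. Converting the self-convolution hypothesis on $k$ into a bound on $|\al|*k$ is the delicate step on which both implications ultimately rest, and it is where $k\in A_W$, $k_n^{1/n}\to1$ and $\sup_n k_n/k_{n+1}<\infty$ must be combined with \eqref{cond-Nik-Embry-fuerte}.
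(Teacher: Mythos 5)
Your architecture matches the paper's almost step for step: the paper also proves the converse by showing $V_D$ is an isometry via the identity $(\al\ast k)_m=\delta_{m0}$, and the forward direction by verifying $B_k\in\pweak{\al}\cap\weak{k}$ through exactly your two convolution bounds $\sup_m (k\ast k)_m/k_m<\infty$ and $\sup_m(\abs{\al}\ast k)_m/k_m<\infty$. But the step you defer to your final paragraph --- the comparison $\abs{\al_n}\lesssim k_n$, equivalently the invertibility of $k$ in the Banach algebra $\ell^\infty(\om)$ with $\om_n=1/k_n$ --- is the genuine core of the theorem, and your splitting argument cannot produce it, even in principle: in $(\abs{\al}\ast k)_m$ the terms with $\al$-index $n$ within $M$ of $m$ are of size $\asymp\abs{\al_m}/k_m$, and condition \eqref{cond-Nik-Embry-fuerte} controls only quotients of the form $k_jk_{n-j}/k_n$, so it can be brought to bear on either the central or the near-diagonal range only \emph{after} one already knows $\abs{\al_n}\lesssim k_n$ --- the argument is circular. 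The paper obtains this bound as Theorem~\ref{thm omega bdd by summable implies algebra}(ii), and the proof is not an elementary estimate but an application of the Banach-algebra machinery of El-Fallah, Nikolski and Zarrabi \cite{FNikZarr98}: a compact embedding of $\ell^\infty(\om)$ into a multiplier convolution algebra, the resolvent estimate $\de_1(A_0,\mathfrak{M}(A_0))=0$ yielding uniform bounds on $\norm{1/f}$, the identification of the maximal ideal space of $\ell^\infty_0(\om)$ with $\ol{\D}$ (this is precisely where the hypothesis $k_n^{1/n}\to1$, unused in your sketch, enters), and a dilation argument with $f(rt)$, $r\to 1^-$, to pass from the separable algebra $\ell^\infty_0(\om)$ to $\ell^\infty(\om)$. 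A self-contained Gelfand-theory proof is available only under the stronger summability hypothesis \eqref{eq condition on omega_n summable} (see Remark~\ref{rem4_3}). So as written, the forward implication of your proof --- that a part of $B_k\otimes I_\CE$ satisfies the two SOT-convergence conditions and $\al(T^*,T)\ge0$ --- has a genuine gap.

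A secondary structural point: you have routed the converse through the same unproven bound, which is unnecessary. Your remainder $E_N$ need not be estimated via $(\abs{\al}\ast k)_m/k_m$; since all terms of the double series $\sum_n k_n\sum_j\abs{\al_j}\norm{T^{n+j}x}^2$ are nonnegative, applying the uniform bound of Proposition~\ref{prop equivalences for Adm alpha weak}(ii) for $T\in\weak{\al}$ to the vectors $T^nx$ gives the iterated estimate $\lesssim\sum_n k_n\norm{T^nx}^2\lesssim\norm{x}^2$, and Tonelli then justifies the rearrangement $\sum_n k_n\norm{DT^nx}^2=\sum_m(\al\ast k)_m\norm{T^mx}^2=\norm{x}^2$ outright, which is how the paper argues. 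With that change, your converse direction is complete using only $T\in\pweak{\al}\cap\weak{k}$; the missing convolution lemma is needed solely for the forward direction.
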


As it will be seen later, the SOT-convergence of $\sum \abs{\al_n}T^{*n}T^n$
implies the the SOT-convergence of $\sum \al_nT^{*n}T^n$.

Notice that in Theorem~\ref{Muller-type-thm}, the isometric part $S$ is unnecessary
(see Theorem~\ref{thm can choose VD}~(ii) below for more information).
This theorem shows that $T$ is $\al$-modelable
in many cases when $k$ is not a Nevanlinna-Pick kernel, and so
Theorem~\ref{thmBCHMc} does not apply. 
Not much about these kernels has been known previously.
Given an integer $N \ge 2$, there are examples of functions $k$ satisfying the hypotheses of
Theorem~\ref{Muller-type-thm} with whatever
prescribed signs of the coefficients $\al_2, \dots, \al_N$
(see Example~\ref{exa alpha positive coeff}).
Note that  $\al_1 = -k_1$ is always negative.

\begin{rem}
Suppose that $k \in A_W$ and the sequence
\[
\left\{ \dfrac{k_n}{k_{n+1}} \left( 1+ \dfrac{1}{n+1} \right)^a \right\}
\]
is increasing for some $a>1$.
Then \eqref{cond-Nik-Embry-fuerte} holds.
This is close to \cite[Proposition~34]{Shields-review}.
Indeed, put $k_j^* := (j+1)^{-a}$, and define $\rho_j := k_j / k_j^*$.
Then our condition reduces to the condition
$\rho_{n+2}/\rho_{n+1} \ge \rho_{n+1}/\rho_{n}$, for all $n$,
which implies that
$\rho_j \rho_{n-j} / \rho_n \le C$, for $0 \le j \le n$.
Since
\[
\dfrac{k_j k_{n-j}}{k_n} = \dfrac{\rho_j \rho_{n-j}}{\rho_n} \, \,  \dfrac{k_j^* k_{n-j}^*}{k_n^*}
\]
and $\{ k_n^* \}$ satisfies \eqref{cond-Nik-Embry-fuerte},
it follows that $\{  k_n \}$ also satisfies \eqref{cond-Nik-Embry-fuerte}.

Hence, for sufficiently regular sequences $\{k_n\}$, 
the condition~\eqref{cond-Nik-Embry-fuerte} 
is rather close to the condition $\sum k_n<\infty$.
It can be added that, in fact, in Theorem~\ref{Muller-type-thm}
$\{k_n\}$ need not be regular; moreover, the quotients $k_n/k_{n+1}$ need not converge
(see Remark~\ref{exa alpha positive coeff 2}).
\end{rem}

The techniques employed in the proof are different from~\cite{CH18}. We use, basically,
a combination of M\"{u}ller's arguments in \cite{Mul88} and Banach algebras techniques.

The above theorems open the question of describing invariant subspaces
of $B_k\otimes I_\CE$ and of constructing a functional model of operators
under the study, which certainly would be interesting.
We do not address this question in this paper.

Given an operator $C:H\to \CE$, where
$\CE$ is an auxiliary Hilbert space, we define
\begin{equation}
\label{eq defi VC}
V_C x(z) = C(I_H - zT)^{-1} x,
\quad x \in H, \quad z \in \D.
\end{equation}
The next result shows that
whenever $T$ is $\al$-modelable, the operator $V_D : H \to \CH_k \otimes \mathfrak{D}$
is a contraction, and we can give an explicit model
for $T$ (that is, give explicitly $\CE$, $S$ and the transform
which sends the initial space into the model space).
First we need to state one more technical hypothesis, whose 
meaning will be clear later. 

\begin{hypo}
\label{hypo-adm}
Let $\al$ be a function satisfying Hypothesis~\ref{hypo-al}. 
Put 
\begin{equation}
\be(t)=\sum_{n\ge0}\be_n t^n, \quad \text{where } \be_n=|\al_n|, 
\end{equation}
and $\ga(t)=\be(t)k(t)$. We assume that $k_n/k_{n+1}\le C'$ 
and $\ga_n\le C''k_n$ for all $n\ge 0$.  
\end{hypo}

The condition $k_n/k_{n+1}\le C'$ is equivalent to boundedness of 
$B_k$. If it holds, then the second condition is satisfied whenever 
there is some $N$ such that either $\al_n\ge 0$ for $n\ge N$, 
or $\al_n\le 0$ for $n\ge N$. 

\begin{thm}[Explicit model]
\label{thm-addendum-to-B-Cl-H-Mc}
Assume Hypotheses~\ref{hypo-al} and~\ref{hypo-adm}. 
Let $T$ be $\al$-modelable. 
Then $\al(T^*,T)\ge 0$, 
$V_D$ is a contraction, and hence we can define
\[
W=(I_H-V_D^*V_D)^{1/2}, \quad \CW= \ol{W H}.
\]
Moreover, $S: \CW \to \CW$, given by $SWx := WTx$, is an isometry and the operator
\[
(V_D,W) : H \to (\CH_k \otimes \mathfrak{D}) \oplus \CW, \qquad (V_D,W)h = (V_D h, Wh)
\]
provides a model of $T$, in the sense that $(V_D,W)$ is isometric and
\[
((B_k \otimes I_\mathfrak{D}) \oplus S)\cdot(V_D,W) = (V_D,W)\cdot T.
\]
\end{thm}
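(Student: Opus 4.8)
The plan is to transport every computation to the model space. By Definition~\ref{defi Agler-modelable} I may assume that $H$ equals an invariant subspace $\CM$ of $M:=(B_k\otimes I_\CE)\oplus S$, acting on $\mathcal{K}:=(\CH_k\otimes\CE)\oplus\mathcal{N}$ with $S$ an isometry on $\mathcal{N}$, and that $T=M|_\CM$. Since $M$ is block diagonal, so is $\alpha(M^*,M)$. On the isometric summand $S^{*n}S^n=I$, whence $\alpha(S^*,S)=\alpha(1)I_{\mathcal{N}}$, where $\alpha(1)=\sum_n\alpha_n$ converges because $\alpha\in A_W$. On $\CH_k\otimes\CE$, the action \eqref{def-B-k} gives that $B_k^{*n}B_k^n$ is the diagonal operator $t^m\mapsto(k_{m-n}/k_m)\,t^m$ for $m\ge n$ (and $0$ otherwise), so the convolution identity $\sum_{j=0}^m\alpha_j k_{m-j}=\delta_{m,0}$ coming from $\alpha k=1$ collapses the partial sums of $\sum_n\alpha_n B_k^{*n}B_k^n$ to the rank-one projection $P_0$ onto the constants. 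Here Hypotheses~\ref{hypo-adm} enters: the bound $\gamma_n\le C''k_n$ makes the diagonal entries of the partial sums of $\sum_n|\alpha_n|B_k^{*n}B_k^n$ uniformly bounded by $C''$, and dominated convergence in the weighted $\ell^2$ then yields SOT-convergence of $\sum_n|\alpha_n|M^{*n}M^n$ and of $\sum_n\alpha_n M^{*n}M^n$. Thus $\Pi:=\alpha(M^*,M)=(P_0\otimes I_\CE)\oplus\alpha(1)I_{\mathcal{N}}\ge0$, and since $\CM$ is invariant one has $T^{*n}=P_\CM M^{*n}|_\CM$ and $\alpha(T^*,T)=P_\CM\Pi|_\CM\ge0$; this defines $D$ and $\FD$ through \eqref{eq defi D}.

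Next I would show that $V_D$ is a contraction. For $x\in\CM$ one has $T^nx=M^nx\in\CM$, so
\[
\|DT^nx\|^2=\langle\alpha(T^*,T)T^nx,T^nx\rangle=\langle\Pi M^nx,M^nx\rangle,
\]
and therefore, with $G_N:=\sum_{n=0}^N k_n M^{*n}\Pi M^n$, the partial sums satisfy $\sum_{n=0}^N k_n\|DT^nx\|^2=\langle G_Nx,x\rangle$. The decisive point is that $G_N$ is computable on $\mathcal{K}$ even though $\sum_n k_n M^{*n}M^n$ itself may diverge: on $\CH_k\otimes\CE$ one finds $k_n B_k^{*n}P_0 B_k^n$ to be the orthogonal projection onto $\C\,t^n$, so the shift part of $G_N$ is the projection onto $\operatorname{span}\{1,t,\dots,t^N\}\otimes\CE$, while on $\mathcal{N}$ it is the scalar $\alpha(1)\big(\sum_{n=0}^N k_n\big)I$, which does not exceed $I$ because $\alpha(1)k(1)\le1$ (if $k(1)<\infty$ then $\alpha(1)=1/k(1)$, and if $k(1)=\infty$ then continuity of $\alpha$ on $\ol\D$ forces $\alpha(1)=0$). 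Hence $G_N\le I_{\mathcal{K}}$ for all $N$, and letting $N\to\infty$ gives $\|V_Dx\|^2=\sum_n k_n\|DT^nx\|^2\le\|x\|^2$. In particular $V_D$ is a well-defined contraction into $\CH_k\otimes\FD$, so $W$ and $\CW$ make sense and $(V_D,W)$ is isometric directly from $\|V_Dx\|^2+\|Wx\|^2=\|x\|^2$.

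It then remains to establish the two intertwining relations. The identity $(B_k\otimes I_\FD)V_D=V_DT$ is immediate from \eqref{def-B-k} and \eqref{eq defi VC}: applying $B_k$ deletes the constant term $Dx$ and shifts, so the $m$-th coefficient of $B_kV_Dx$ is $DT^{m+1}x$, which is the $m$-th coefficient of $V_D(Tx)$. For the isometry $S$, let $\Theta$ denote the SOT-limit of $G_N$, equal to the identity on the shift summand and to $\alpha(1)k(1)I$ on $\mathcal{N}$. Comparing blocks gives the operator identity $I_{\mathcal{K}}-M^*M=\Theta-M^*\Theta M$ (on $\CH_k\otimes\CE$ both sides equal $I-(B_k\otimes I)^*(B_k\otimes I)$, and on $\mathcal{N}$ both sides vanish). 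Compressing this to $\CM$ and using $Tx=Mx$ yields
\[
\|x\|^2-\|Tx\|^2=\langle\Theta x,x\rangle-\langle\Theta Tx,Tx\rangle=\|V_Dx\|^2-\|V_DTx\|^2,
\]
which is precisely $\|Wx\|^2=\|WTx\|^2$. Hence $SWx:=WTx$ is well defined and isometric on $\CW$, and combining the two relations gives $((B_k\otimes I_\FD)\oplus S)(V_D,W)=(V_D,W)T$.

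The main obstacle is the correct handling of the strong operator topology: one must verify that $\alpha(M^*,M)$ converges and identify the \emph{finite} sums $G_N$—rather than the possibly divergent $\sum_n k_n M^{*n}M^n$—as the objects to be estimated against $I$. This is exactly where Hypotheses~\ref{hypo-adm} is needed, via $k_n/k_{n+1}\le C'$ (boundedness of $B_k$) and $\gamma_n\le C''k_n$ (uniform control of the truncations); the remaining work is the bookkeeping with the convolution identity $\alpha k=1$ and the separate treatment of the cases $k(1)<\infty$ and $k(1)=\infty$ for the isometric summand.
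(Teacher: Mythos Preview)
Your argument is correct, and it takes a route genuinely different from the paper's. The paper does not prove Theorem~\ref{thm-addendum-to-B-Cl-H-Mc} by direct computation on the model space; instead it first establishes the abstract tools Propositions~\ref{prop-gen-form-intertwining-op}--\ref{prop-rel-D-C-W} and proves Theorem~\ref{thm can choose VD}, then derives Theorem~\ref{thm-addendum-to-B-Cl-H-Mc} as a corollary. The key ingredient there is Proposition~\ref{prop-rel-D-C-W}, which relates the defect $D$ to \emph{any} modeling pair $(V_C,W)$ via $\|Dx\|^2=\|Cx\|^2+\alpha(1)\|Wx\|^2$; in the critical case this forces $C=vD$ for a unitary $v$, and in the subcritical case a further computation shows $V_D$ is actually isometric. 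Condition~(ii) of Proposition~\ref{prop-charzn-intertwining} is then inherited from the given model.

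You bypass all of this by computing $\Pi=\alpha(M^*,M)$ and the partial sums $G_N=\sum_{n\le N}k_nM^{*n}\Pi M^n$ block-wise, observing that $k_nB_k^{*n}P_0B_k^n$ is exactly the orthogonal projection onto $\C t^n$ so that $G_N\le I$ directly, and extracting the isometry relation for $S$ from the operator identity $I-M^*M=\Theta-M^*\Theta M$ (checked separately on each summand, with the dichotomy $k(1)<\infty$ versus $k(1)=\infty$). This is more elementary and self-contained: it never needs the series-rearrangement argument of Proposition~\ref{prop-rel-D-C-W}, and it makes transparent exactly where Hypotheses~\ref{hypo-adm} enter (the bound $\gamma_m/k_m\le C''$ controls the diagonal of the truncations). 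The paper's approach, on the other hand, is more modular---Propositions~\ref{prop-charzn-intertwining} and~\ref{prop-rel-D-C-W} are reusable characterizations that also drive the uniqueness statement in Theorem~\ref{thm can choose VD}---whereas your computation is tailored to this one theorem.
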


\begin{rem}
Suppose $\al$ satisfies the above two hypotheses, and suppose 
that $T$ is an $\al$-modelable operator, which is given already 
by its model without the isometric part. That is, there is an invariant subspace $L$ of 
an operator $B_k\otimes I_\CE$, acting on $\CH_k\otimes \CE$, 
such that $T$ is the restriction of this operator to $L$.  
Then $\FD=\CE$ (identified with the constant functions in $\CH_k\otimes \CE$), and 
$V_D$ is the identity operator on $L$. This follows from Corollary~\ref{cor-Bk-in-pweak} below. 

Similarly, in the general case, 
if $T$ is a part of an operator $(B_k\otimes I_\CE)\oplus S$, where 
$S$ is an isometry, there is a unitary operator $u$ such that 
the transform $(V_D, uW)$ is just the identity. 
\end{rem}

If it is known that $T$ is $\al$-modelable,
one can ask about the uniqueness of the model. For answering this question,
we need the following definitions.

\begin{defi}
\label{defi minimal model}
Let $\mathcal{L}$ be an invariant subspace of $(B_k\otimes I_\CE)\oplus S$, 
where $S : \CW \to \CW$ is an isometry.
We will say that the corresponding model operator
\[
\big((B_k\otimes I_\CE)\oplus S\big) | \mathcal{L}
\]
is \emph{minimal} if the following two conditions hold.
\begin{enumerate}[\rm (i)]
\item
$\mathcal{L}$ is not contained in
$(\CH_k \otimes \CE') \oplus \mathcal{W}$ for any $\CE' \subsetneqq \CE$.
\item
$\mathcal{L}$ is not contained in $(\CH_k \otimes \CE) \oplus \mathcal{W'}$ 
for any $\CW' \subsetneqq \CW$ invariant by $S$.
\end{enumerate}
\end{defi}

In Remark~\ref{rem to minimal model} we show that the explicit model obtained
in Theorem~\ref{thm-addendum-to-B-Cl-H-Mc} is indeed minimal.

Note that under Hypotheses~\ref{hypo-al}, 
$\al$ is defined on the closed unit disc $\ol\D$
and does not vanish on the interval $[0,1)$. 
Since $\al(0) = \al_0 = 1$, we obtain that $\al(1) \ge 0$.
We distinguish the following two cases. This distinction appears already
in \cite[Subsection 2.3]{CH18} for the Nevanlinna-Pick case.

\begin{defi}
Suppose that $\al$ meets Hypotheses~\ref{hypo-al}.
We will say that
$\al$ is of \emph{critical type}
(or, alternatively, that we have the \emph{critical case})
if $\al(1) = 0$.
If $\al(1) > 0$, we will say that $\al$ is of \emph{subcritical type}
(or, alternatively, that we have the \emph{subcritical case}).
\end{defi}

\begin{thm}[Uniqueness of the minimal model]
\label{thm can choose VD}
Suppose that $\al$ meets Hypotheses~\ref{hypo-al} and 
~\ref{hypo-adm}. 
Let $T$ be an $\al$-modelable operator.  
\begin{itemize}
\item[\tn{(i)}]
In the critical case, the minimal model of $T$ is unique. 
More precisely, the pair of transforms $(V_D,W_0)$, where $W_0=(I-V_D^*V_D):H\to \CW_0$
and $\CW_0:=\overline{\Ran} (I-V_D^*V_D)$, gives rise to a minimal model,
and any minimal model is provided by $(V_C, W)$, where
$C=vD$, $W=wW_0:H\to \CW$ and $v, w$ are unitary isomorphisms.

\item[\tn{(ii)}] 
In the subcritical case,
the minimal model of $T$ is not unique, in general. 
However, there always exists a minimal model given by $V=V_D$, 
in the sense that $V_D : H \to \CH_k \otimes I_\FD$ is an isometry 
such that $(B_k \otimes I_D) V_D = V_D T$. 
Note that in this case the isometry $S$ is absent. 
\end{itemize}
\end{thm}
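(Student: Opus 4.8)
The plan is to analyse an \emph{arbitrary} minimal model of $T$ through its observation operator and to reduce the whole statement to a single operator identity. Suppose $\big((B_k\otimes I_\CE)\oplus S\big)\mid\CL$ is a minimal model, unitarily equivalent to $T$ via $U\colon H\to\CL$, and write $Ux=(f,g)\in(\CH_k\otimes\CE)\oplus\CW$. Setting $Cx:=f_0\in\CE$ (the zeroth Taylor coefficient of the first component) and using $U T^n=\big((B_k\otimes I_\CE)\oplus S\big)^nU$ together with $(B_k^m f)(0)=f_m$, I would show $CT^nx=f_n$, so that $V_Cx=f$ is exactly the first component of $U$. Hence $V_C$ is a contraction, $(B_k\otimes I_\CE)V_C=V_CT$, and expanding the norm in $\CH_k\otimes\CE$ gives $V_C^*V_C=\sum_{n\ge0}k_nT^{*n}C^*CT^n$. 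Putting $\wt Wx:=g$ one gets $\wt W^*\wt W=I-V_C^*V_C$, and since $S$ is isometric with $S\wt W=\wt WT$ one obtains the Lyapunov fixed-point relation $T^{*m}\wt W^*\wt W\,T^m=\wt W^*\wt W$ for all $m$.

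The core of the argument is an algebraic identity. The Cauchy product $\sum_{j=0}^{n}k_j\al_{n-j}=\delta_{n0}$ shows that the maps $\Lambda_k(X)=\sum_n k_nT^{*n}XT^n$ and $\Lambda_\al(Y)=\sum_m\al_mT^{*m}YT^m$ are mutually inverse. Applying $\Lambda_\al$ to $V_C^*V_C=I-\wt W^*\wt W=\Lambda_k(C^*C)$, and using $\Lambda_\al(I)=\al(T^*,T)=D^2$ together with $T^{*m}\wt W^*\wt W\,T^m=\wt W^*\wt W$, I would arrive at
\[
C^*C=D^2-\al(1)\,\wt W^*\wt W .
\]
The main obstacle is justifying the interchange of the two nested SOT-series hidden in $\Lambda_\al\Lambda_k=\mathrm{id}$; this is precisely what Hypotheses~\ref{hypo-adm} (boundedness of $B_k$ via $k_n/k_{n+1}\le C'$ and $\ga_n\le C''k_n$ with $\be_n=|\al_n|$) are designed for, and I would handle it with the same absolute-convergence and Banach-algebra estimates already used for Theorem~\ref{Muller-type-thm}. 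In particular $C^*C\le D^2$ and $\al(T^*,T)=D^2\ge0$ come out for free.

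In the critical case $\al(1)=0$ the identity collapses to $C^*C=D^2$ for \emph{every} minimal model. Thus $\|Cx\|=\|Dx\|$, so there is a unitary $v\colon\FD\to\ol{CH}$ with $C=vD$; since $CT^nx=v\,DT^nx\in v\FD$, the closed span of all Taylor coefficients is $\ol{CH}\cong\FD$, and minimality condition~(i) forces $\CE=\ol{CH}$, so $v\colon\FD\to\CE$ is onto. As $V_C^*V_C=\Lambda_k(C^*C)$ depends only on $C^*C$, we get $V_C^*V_C=V_D^*V_D$ and hence $\wt W^*\wt W=I-V_D^*V_D=W_0^*W_0$, giving a unitary $w\colon\CW_0\to\ol{\wt WH}$ with $\wt W=wW_0$; minimality~(ii) forces $\CW=\ol{\wt WH}$. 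Together with the minimality of $(V_D,W_0)$ (Theorem~\ref{thm-addendum-to-B-Cl-H-Mc} and Remark~\ref{rem to minimal model}), this is exactly the asserted form $(V_C,\wt W)=(V_{vD},wW_0)$, proving uniqueness.

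In the subcritical case $\al(1)>0$ I would instead produce the pure model directly. Starting from any model $(V_C,\wt W)$, the fixed-point relation and $\sum_n k_n=k(1)=1/\al(1)<\infty$ give $\Lambda_k(\wt W^*\wt W)=k(1)\wt W^*\wt W$. Combining $V_C^*V_C=\Lambda_k(C^*C)$, the identity $C^*C=D^2-\al(1)\wt W^*\wt W$, the relation $\al(1)k(1)=1$, and $V_C^*V_C=I-\wt W^*\wt W$ yields $\Lambda_k(D^2)=I$, i.e. $V_D^*V_D=I$. Hence $V_D\colon H\to\CH_k\otimes\FD$ is an \emph{isometry} intertwining $T$ with $B_k\otimes I_\FD$, its range is invariant, and the coefficients $DT^nx$ (already $Dx$ at $n=0$) span $\FD$, so $V_D$ is a minimal model with no isometric part. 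The non-uniqueness ``in general'' reflects that, once $\al(1)>0$, the identity admits nonzero fixed points $\wt W^*\wt W$ compatible with $C^*C\ge0$; a concrete instance is recorded separately.
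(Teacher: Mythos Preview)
Your argument is correct and follows essentially the same route as the paper: your operator identity $C^*C=D^2-\al(1)\,\wt W^*\wt W$ is exactly Proposition~\ref{prop-rel-D-C-W} (stated there in scalar form $\|Dx\|^2=\|Cx\|^2+\al(1)\|Wx\|^2$), and both the critical and subcritical conclusions are drawn from it just as you do. Two small wrinkles worth cleaning up: your appeal to Theorem~\ref{thm-addendum-to-B-Cl-H-Mc} for the minimality of $(V_D,W_0)$ is circular in the paper's logical order (that theorem is deduced \emph{from} the present one), but everything you need follows directly from Proposition~\ref{prop-charzn-intertwining} once you have $V_D^*V_D=V_C^*V_C$; and the ``Banach-algebra estimates from Theorem~\ref{Muller-type-thm}'' are subcritical-only, whereas the rearrangement $\Lambda_\al\Lambda_k(C^*C)=C^*C$ is justified (as in the paper's proof of Proposition~\ref{prop-rel-D-C-W}) simply by $\sum_m|\al_m|\,\|V_C T^mx\|^2\le\sum_m|\al_m|\,\|T^mx\|^2<\infty$, using only $T\in\weak{\al}$, which Hypotheses~\ref{hypo-adm} guarantee via Corollary~\ref{cor-Bk-in-pweak}.
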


We remark that there are other works that give answers to 
the above Question \ref{question}. In particular, 
Pott \cite{Pott99} gave a model for operators satisfying two inequalities 
$(1-p)(T^*, T)\ge 0$ and $(1-p)^m(T^*, T)\ge 0$, where 
$p$ is a polynomial with nonnegative coefficients, $m\ge 1$ and 
$p(0)=0$ (this class is a generalization of $m$-hypercontractions). 
In fact, she treats tuples of operators. In \cite{BB13}, 
Ball and Bolotnikov consider a function $\al(t)$ 
in the Wiener algebra such that $k=1/\al$ has positive 
coefficients satisfying $0<\epsilon\le k_n/k_{n+1}\le 1$ for all $n$ 
(so that $B_k$ is a contraction). They show that an operator $T$ 
is $\al$-modelable, with absent isometric part, if and only if 
if $\al(T^*, T)\ge 0$ as well as infinitely many additional inequalities 
hold ($T$ is $\beta$-hypercontractive, where $\beta_n=1/k_n$), and 
$T$ is what they call $\beta$-stable. See \cite{BB13}, Theorem 4.3. 
In \cite{BB13}, Theorem 7.2, Ball and Bolotnikov give 
a model of $T$ in terms of their generalization of the 
characteristic function, which is an infinite family of operator-valued 
functions. 

Whereas these authors treat both subcritical and critical cases, 
Theorem~\ref{Muller-type-thm} only concerns the subcritical case 
(because of the condition $k\in A_W$).

\subsection{Consequences of the model}
\label{subsec conseq}
If an operator $T$ is $\al$-modelable, it is natural to study what consequences
can be derived from the model. Here we obtain two types of consequences: 
\begin{enumerate}
\item 
when the defect operator $D$ has finite rank (that is, $\dim \FD < \infty$), and 
\item 
ergodic consequences when $\al(t) = (1-t)^a$ with $0<a<1$. 
\end{enumerate}

We will use the space $\CR_k=\CH_{\tilde k}$, where $\tilde k_n= 1/k_n$. It is easy to see that
it is the reproducing kernel Hilbert space, corresponding to
the positive definite kernel $k(z,w) := k(\bar{w} z)$.
The pairing $\langle f,g\rangle=\sum f_n\bar g_n$
$(f\in \CH_k, g\in \CR_k)$  makes $\CR_k$ naturally dual to $\CH_\ka$.
In this interpretation, the adjoint operator to $B_k$ is the operator
$g(z)\mapsto zg(z)$, acting on $\CR_k$.

If $\al$ is of subcritical type,
we have the following result related to the Carleson condition.

\begin{thm}
\label{thm-conseq-Carleson}
Let $T$ be an operator similar to a part of $B_k\otimes I_\FD$,
acting on the space $\CH_k\otimes\FD$,
where $\CR_k$ is a Banach algebra and $\FD$ is finite dimensional. 
Suppose that 
\[
\lim_{n \to \infty} \left( \inf_{j\ge 0} \,  \dfrac{k_j}{k_{n+j}} \right)^{1/n} 
= \lim_{n \to \infty} k_n^{1/n} = 1, 
\]
and also that
\begin{equation}\label{eq sum kn bound C epsilon}
\sum_{n=N}^{\infty} k_n \le C N^{-\ep} \qquad \forall N \ge 0,
\end{equation}
for some positive constants $C$ and $\ep$ which do not depend on $N$.
Suppose that the spectrum $\si(T)$ does not cover $\D$. Put
\[
E:= (\ol{\si(T) \cap \D}) \cap \T
\]
and let $\{ l_\nu \}$
denote the lengths of the finite complementary intervals of $E$ (in $\T$).
Then the Lebesgue measure of $E$ is $0$, and the Carleson condition holds:
\[
\sum_{\nu} l_\nu \log \dfrac{2\pi}{l_\nu} < \infty.
\]
\end{thm}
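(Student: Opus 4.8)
The plan is to connect the spectral geometry of $T$ to boundary behavior of functions in the dual space $\CR_k$, via the similarity to a part of $B_k\otimes I_\FD$. The key principle is that an eigenvalue of $(B_k\otimes I_\FD)^*$ at a point $w\in\D$ is produced by the reproducing kernel $k_w(z)=k(\bar w z)$, and the adjoint on $\CR_k$ acts as multiplication by $z$. Since $T$ is similar to a part of $B_k\otimes I_\FD$, the adjoint $T^*$ is (after the similarity) a compression/restriction linked to multiplication by $z$ on a finite-rank-valued version of $\CR_k$. The hypothesis $\dim\FD<\infty$ is crucial: it forces the relevant family of vector-valued reproducing kernels to lie in a $\FD$-copies version of $\CR_k$, so that the eigenvector structure of $T^*$ over $\si(T)\cap\D$ gives a map into $\CR_k\otimes\FD$ that is, up to the similarity and finite multiplicity, an embedding.

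\medskip

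First I would record the growth conditions' consequences. The assumption $\lim_n k_n^{1/n}=1$ guarantees the kernel $k(z,w)$ is analytic precisely on $\D$, so $\si(B_k)\subset\ol\D$ and the spectral radius matches. The condition $\lim_n(\inf_{j}k_j/k_{n+j})^{1/n}=1$ controls the ``shift'' decay and ensures that the reproducing kernels $k_w$ depend analytically and non-degenerately on $w\in\D$ with uniformly comparable norms on compact subsets, which is what lets boundary clustering of $\si(T)$ translate into boundary behavior of genuine $\CR_k$-functions. The summability bound \eqref{eq sum kn bound C epsilon}, namely $\sum_{n\ge N}k_n\le CN^{-\ep}$, is the decisive quantitative input: it controls the tail of the kernel and thereby the modulus of continuity of functions in $\CR_k$ near $\T$. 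The hypothesis that $\CR_k$ is a Banach algebra lets me multiply boundary-value functions and take powers, so that I can pass from a single function vanishing on $E$ to a quantitative decay estimate.

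\medskip

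The core of the argument is to build, from the set $E=(\ol{\si(T)\cap\D})\cap\T$, a nonzero function $g\in\CR_k$ (or a finite vector of such, using $\dim\FD<\infty$) whose boundary values vanish on $E$ but which is not identically zero, and then to read off the Carleson condition from the smoothness of $g$ encoded by \eqref{eq sum kn bound C epsilon}. Concretely: points of $\si(T)\cap\D$ are (via the similarity) eigenvalues of the model's adjoint, hence give reproducing kernels $k_w$; letting $w$ approach a boundary point of $E$ and using the uniform norm comparability, I obtain functions in $\CR_k$ peaking at $E$. A standard outer-function / inner-outer type argument, or a direct use of the Banach algebra structure to form a single $g$ that is small exactly on $E$, yields a function in $\CR_k$ which is continuous on $\ol\D$ (by \eqref{eq sum kn bound C epsilon} one has uniform absolute convergence and a Lipschitz-type modulus) and vanishes on $E$. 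The decay bound $\sum_{n\ge N}k_n\le CN^{-\ep}$ gives that $\CR_k\hookrightarrow$ Lipschitz-$\delta$ for some $\delta>0$, and a classical theorem on zero sets of smooth/Lipschitz functions on $\T$ (the Carleson--Beurling type characterization: zero sets of functions in such analytic Lipschitz classes satisfy exactly $\sum_\nu l_\nu\log(2\pi/l_\nu)<\infty$ and have measure zero) delivers both conclusions at once.

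\medskip

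\textbf{The main obstacle} I expect is the passage from the abstract spectral datum ``$\si(T)\cap\D$ accumulates on $E$'' to the existence of a single concrete nonzero function in $\CR_k$ vanishing exactly on $E$ with controlled smoothness. The eigenvalues of the model's adjoint give reproducing kernels, but one must (i) handle the similarity, which distorts norms but not the algebra/function-class membership, (ii) deal with the finite multiplicity coming from $\FD$ so that a scalar function is produced rather than a vector, and (iii) ensure the limit function is nonzero, i.e.\ that $E\ne\T$ (guaranteed since $\si(T)$ does not cover $\D$, so some boundary arc is free and the peak functions do not degenerate). Once a nontrivial $g\in\CR_k$ with $g|_E=0$ and $g$ in the analytic Lipschitz class is in hand, the Carleson condition and the measure-zero assertion are the classical Carleson--Beurling theorem applied to $g$, so the genuinely new work is confined to this construction step.
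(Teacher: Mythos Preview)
Your overall architecture matches the paper's exactly: produce a nonzero $g\in\CR_k$ vanishing on $\si(T)\cap\D$, show functions in $\CR_k$ are H\"older continuous on $\ol\D$ via the tail estimate \eqref{eq sum kn bound C epsilon}, then invoke Carleson's theorem on zero sets of analytic H\"older functions. The H\"older step and the final Carleson step are correctly identified.

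The gap is in the construction of $g$. Your description (``reproducing kernels $k_w$ as $w\to E$ give peak functions, then an outer/inner-outer argument or Banach-algebra manipulation produces a function small on $E$'') does not amount to a workable construction. Peak functions at $E$ point the wrong way---you need a function \emph{vanishing} on $\si(T)\cap\D$, not peaking there---and there is no inner-outer factorization available in $\CR_k$ in general. Nor is it clear how ``the Banach algebra structure'' alone would manufacture a single nonzero function vanishing on a prescribed spectral set.

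The paper proceeds differently and quite concretely. One passes to the dual picture: $T^*$ is similar to multiplication by $z$ on the quotient $\CR_k^d/\CJ$, where $d=\dim\FD$ and $\CJ=\Space^\perp\subset\CR_k^d$ is the annihilator of the model subspace. One then proves (Theorem~\ref{thm-si-CL-si-Mz}) that $\si(T)\cap\D$ coincides with the set of $\la\in\D$ where the fiber $\CF_\CJ(\la)=\{g(\la):g\in\CJ\}\subsetneq\C^d$. Now use the hypothesis that $\si(T)$ does not cover $\D$: at some $\la_0\in\D\setminus\si(T)$ the fiber is all of $\C^d$, so choose $\vp_1,\dots,\vp_d\in\CJ$ with $\vp_j(\la_0)=e_j$, and set $\vp=\det(\vp_1\mid\cdots\mid\vp_d)\in\CR_k$. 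Then $\vp(\la_0)=1$, so $\vp\not\equiv 0$; at every spectral point $\la$ the columns $\vp_j(\la)$ lie in the proper subspace $\CF_\CJ(\la)$ and are therefore dependent, forcing $\vp(\la)=0$. This determinant trick---reducing from the $\FD$-valued setting to a scalar function---is the missing ingredient in your proposal, and it is exactly where the finite-dimensionality of $\FD$ is used.
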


Some of the arguments employed in the proof of this theorem are related with 
the so-called index of an invariant subspace of $\CR_k\otimes\CE$; see 
Section~\ref{section finite defect} for more details. 

In the critical case, an important family of functions $\al$ are those of the form
$\al(t) := (1-t)^a$, for $a>0$. Note that they satisfy Hypotheses~\ref{hypo-al}.
When $a=m$ is a positive integer, it is said that $T \in L(H)$ is an
\emph{$m$-contraction} if $(1-t)^m(T^*,T) \ge 0$, and that $T$ is an
\emph{$m$-isometry} if $(1-t)^m(T^*,T) = 0$.
The papers \cite{BBMP19,BMN10,BSZ18,Gu14,Ryd19} (among
others) study $m$-isometries.
The paper \cite{MajMbSuciu16} is dedicated to a profound study of $2$-isometries.
In~\cite{Gu15}, Gu treats a more general class of $(m,p)$-isometries on Banach spaces, and in 
~\cite{Gu18Korean}, he discusses $m$-isometric tuples of operators on a Hilbert space.
In \cite{CS17JMAA}, Chavan and Sholapurkar study another
interesting class of operators: $T$ is a
\emph{joint complete hyperexpansion of order} $m$ if
$(1-t)^n(T^*,T) \le 0$ for every integer $n \ge m$.
That work, in fact, is devoted to tuples of commuting operators.

Here we introduce the case when the exponent $a$ is not an integer. 
The definitions of $a$-contraction and $a$-isometries are the natural ones:
we say that
$T$ is an \emph{$a$-contraction} if $(1-t)^a(T^*,T) \ge 0$,
and $T$ is an \emph{$a$-isometry} if $(1-t)^a(T^*,T) = 0$.

Note that $\al(t) := (1-t)^a$ is of Nevanlinna-Pick type
when $0<a<1$. In this case, with the help of the model given by Theorem~\ref{thmBCHMc},
we will get the following two ergodic results.

\begin{thm}\label{thm Ca to Cesaro bounded}
If $T$ is an $a$-contraction, with $0<a<1$,
then $T$ is quadratically $(C,b)$-bounded for any $b>1-a$.
\end{thm}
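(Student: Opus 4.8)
The plan is to exploit the Nevanlinna--Pick model provided by Theorem~\ref{thmBCHMc}. Since $0<a<1$, the function $\al(t)=(1-t)^a$ has $\al_0=1$ and $\al_n<0$ for all $n\ge1$ (the sign of $\binom{a}{n}(-1)^n$ is $-1$, because $a>0$ while $a-1,\dots,a-n+1<0$), so $\al$ is of Nevanlinna--Pick type; moreover $k(t)=1/\al(t)=(1-t)^{-a}=\sum_n k_n t^n$ has $k_n=\binom{n+a-1}{n}=\Gamma(n+a)/(\Gamma(a)\,\Gamma(n+1))>0$, radius of convergence $1$, and $k_n/k_{n+1}=(n+1)/(n+a)\to1$. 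Thus all hypotheses of Theorem~\ref{thmBCHMc} hold, and the assumption that $T$ is an $a$-contraction, i.e.\ $\al(T^*,T)\ge0$, guarantees that $T$ is $\al$-modelable. We may therefore assume that $T$ is the restriction of $(B_k\otimes I_\CE)\oplus S$ to an invariant subspace, with $S$ an isometry on some Hilbert space $\CF$; in particular $\norm{T^n x}=\norm{\bigl((B_k\otimes I_\CE)\oplus S\bigr)^n x}$ for every $x$ in that subspace.

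Recall the Cesàro numbers $A_n^{b}=\binom{n+b}{n}\asymp (1+n)^{b}$ together with the identity $\sum_{n=0}^N A_{N-n}^{b-1}=A_N^{b}$. The quantity controlling quadratic $(C,b)$-boundedness is the $(C,b)$-average of $\norm{T^n x}^2$, namely $\tfrac{1}{A_N^{b}}\sum_{n=0}^N A_{N-n}^{b-1}\norm{T^n x}^2$, and the goal is to bound it by $M\norm{x}^2$ uniformly in $N$. Write $x=f\oplus y$ with $f=\sum_n x_n t^n\in\CH_k\otimes\CE$ and $y\in\CF$. Since $S$ is an isometry, $\norm{S^n y}^2=\norm{y}^2$, and the identity above shows that the isometric part contributes exactly $\norm{y}^2\le\norm{x}^2$ to the average; hence it suffices to treat $T=B_k\otimes I_\CE$. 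A direct computation from \eqref{def-B-k} gives $\norm{(B_k\otimes I_\CE)^n f}^2=\sum_{i\ge0}k_i\,\norm{x_{n+i}}_\CE^2$.

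Substituting this expression, interchanging the nonnegative sums, and setting $c_m=\norm{x_m}_\CE^2$, the average becomes $\tfrac{1}{A_N^{b}}\sum_m c_m\,K_N(m)$ with $K_N(m)=\sum_{p=0}^{\min(m,N)}A_{N-p}^{b-1}\,k_{m-p}$. As $\norm{x}^2=\sum_m c_m k_m$, the desired bound reduces to the pointwise estimate $K_N(m)\le M\,A_N^{b}\,k_m$, uniformly in $m$ and $N$. Using the asymptotics $k_j\asymp (1+j)^{a-1}$ (Stirling) and $A_j^{b-1}\asymp(1+j)^{b-1}$, this becomes
\[
\sum_{p=0}^{\min(m,N)}(1+N-p)^{b-1}\,(1+m-p)^{a-1}\;\lesssim\;N^{b}\,(1+m)^{a-1}.
\]
The three regimes $m\le N/2$, $m\ge 2N$, and $m\asymp N$ are treated by comparison with the corresponding integral. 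In the first two regimes one factor is essentially constant over the range of summation, and the bound follows from an elementary non-singular integral comparison.

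The decisive, and only genuinely delicate, regime is $m\approx N$. There, near $p=\min(m,N)$ both factors $(1+N-p)^{b-1}$ and $(1+m-p)^{a-1}$ become singular simultaneously, and (after the substitution $q=\min(m,N)-p$) the sum behaves like $\int_{0}q^{(a-1)+(b-1)}\,dq=\int_{0}q^{a+b-2}\,dq$. This integral converges near $0$ precisely when $a+b-2>-1$, that is, when $b>1-a$, in which case it is of order $N^{a+b-1}\asymp N^{b}(1+m)^{a-1}$, exactly matching the right-hand side. Thus the hypothesis $0<a<1$ and the threshold $b>1-a$ enter in exactly this spot, and controlling this confluence of singularities uniformly in $m\asymp N$ is the main obstacle; once it is resolved, collecting the three regimes yields the uniform bound $K_N(m)\le M\,A_N^{b}\,k_m$ and hence the quadratic $(C,b)$-boundedness of $T$.
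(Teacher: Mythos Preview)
Your argument is correct and follows essentially the same route as the paper: the Nevanlinna--Pick model (Theorem~\ref{thmBCHMc}) reduces matters to showing that $B_a\otimes I_\CE$ and the isometry $S$ are each quadratically $(C,b)$-bounded, and for the shift this amounts to the pointwise estimate $K_N(m)\lesssim (1+N)^b(1+m)^{a-1}$, which the paper establishes (packaged as Lemma~\ref{quadraticShift}) via the case split $m\le n$, $n<m\le 2n$, $m>2n$. The only cosmetic differences are that in the middle regime the paper uses the monotonicity $(1+N-p)^{b-1}\le(1+m-p)^{b-1}$ (valid for $b\le1$; the case $b>1$ is handled separately) rather than your integral-confluence heuristic, and that the paper also proves the stronger statement that the Ces\`aro averages tend to~$0$.
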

That $T$ is \emph{quadratically $(C,b)$-bounded} 
(where the letter C stands for Ces\`{a}ro) 
means that there exists a constant $c>0$ such that
\[
\sup_{n \ge 0} \, \frac{1}{k^{b+1}(n)} \, \sum _{j=0}^n  k^{b}(n-j) \|T^jx\|^2  \le c \|x\|^2
\qquad (\forall x\in H),
\]
where the numbers $k^{-s}(n)$, called \emph{Ces\`{a}ro numbers}, are defined by
\[
(1-t)^{s} =: \sum_{n=0}^{\infty} k^{-s}(n) t^n.
\]

As we will show (see Example~\ref{exa B_s a-contr not similar contr}), 
for any $a\in (0,1)$, 
the class of $a$-contractions on $H$ is strictly 
larger than the class of contractions. It is obvious that any 
contraction is quadratically $(C,b)$-bounded
(due to the equality $\sum _{j=0}^n  k^{b}(n-j)=k^{b+1}(n)$ for any $b>0$). 
The meaning of the above fact is that some ergodic properties of 
contractions still hold true for $a$-contractions. 

\begin{thm}\label{thm S appears iff}
Let $T$ be an $a$-contraction with $0<a<1$ and let $b>1-a$.
Then the following statements are equivalent.
\begin{enumerate}[\rm (i)]
\item
The isometry $S$ does not appear in the $(1-t)^a$-model of $T$.
\item
For every $x \in H$,
\begin{equation}\label{eq limit C-means are 0 general}
\exists \lim_{n \to\infty} \frac{1}{k^{b+1}(n)}\sum _{j=0}^n  k^{b}(n-j) \|T^jx     \|^2 = 0.
\end{equation}
\item
For every $x \in H$,
\[
\liminf_{n \to \infty} \norm{T^n x} = 0.
\]
\end{enumerate}
\end{thm}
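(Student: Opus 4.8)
The plan is to reduce the whole statement to a single ergodic identity for the Cesàro means and then read off the three equivalences from it. Since $0<a<1$, the function $\al(t)=(1-t)^a$ is of Nevanlinna--Pick type and of critical type ($\al(1)=0$), with $k(t)=(1-t)^{-a}$, so that $k_n=k^a(n)$ and $\al_n=k^{-a}(n)$. Because $T$ is an $a$-contraction, Theorem~\ref{thmBCHMc} guarantees that $T$ is $\al$-modelable, and I would work throughout with the explicit minimal model given by Theorems~\ref{thm-addendum-to-B-Cl-H-Mc} and~\ref{thm can choose VD}\,(i): identify $H$ with the invariant subspace $(V_D,W)H$ of $(B_k\otimes I_\FD)\oplus S$, where $W=(I_H-V_D^*V_D)^{1/2}$, $\CW=\ol{WH}$ and $S$ is the isometry $SWx=WTx$. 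As we are in the critical case, this minimal model is unique, so statement (i) --- that $S$ does not appear --- is exactly the condition $\CW=\{0\}$, equivalently that $V_D$ be \emph{isometric}. Writing
\[
M_n(x):=\frac{1}{k^{b+1}(n)}\sum_{j=0}^{n}k^{b}(n-j)\,\norm{T^j x}^2,
\]
I would then prove the cycle (i)$\Rightarrow$(ii)$\Rightarrow$(iii)$\Rightarrow$(i) on the basis of the identity $\lim_{n\to\infty}M_n(x)=\norm{x}^2-\norm{V_D x}^2$ for every $x\in H$.

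To obtain this identity I would exploit the orthogonality of the model decomposition. Under the identification $x\leftrightarrow(V_D x,\,Wx)$ we have $T^j x\leftrightarrow\big((B_k\otimes I_\FD)^j V_D x,\,S^j Wx\big)$, so that, $S$ being isometric,
\[
\norm{T^j x}^2=\norm{(B_k\otimes I_\FD)^j V_D x}^2+\norm{Wx}^2 .
\]
The contribution of the constant second summand to $M_n(x)$ is exactly $\norm{Wx}^2$ for every $n$, since $\sum_{i=0}^{n}k^{b}(i)=k^{b+1}(n)$, and $\norm{Wx}^2=\norm{x}^2-\norm{V_D x}^2$. Thus the identity reduces to showing that the shifted Cesàro means of the backward shift vanish, i.e.\ that $\frac{1}{k^{b+1}(n)}\sum_{j=0}^{n}k^{b}(n-j)\,\norm{(B_k\otimes I_\FD)^j f}^2\to0$ for every $f\in\CH_k\otimes\FD$.

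This last limit is the technical heart of the argument. First I would observe that $B_k\otimes I_\FD$ is itself an $a$-contraction --- indeed $\al(B_k^*,B_k)\ge0$ by Theorem~\ref{thmBCHMc}, and tensoring with $I_\FD$ preserves the inequality --- so that Theorem~\ref{thm Ca to Cesaro bounded} provides, for $b>1-a$, the uniform bound $\sup_n\frac{1}{k^{b+1}(n)}\sum_{j=0}^{n}k^{b}(n-j)\,\norm{(B_k\otimes I_\FD)^j f}^2\le c\,\norm{f}^2$. For a polynomial $f$ the sum has only finitely many nonzero terms, and the standard asymptotics $k^{s}(n)\sim n^{s-1}/\Gamma(s)$ make each weight $k^{b}(n-j)/k^{b+1}(n)$ tend to $0$, so the expression vanishes in the limit. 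Since the square root of the expression is a seminorm in $f$, a routine $3\varepsilon$-argument combining this with the uniform bound and the density of polynomials in $\CH_k\otimes\FD$ extends the vanishing to all $f$, completing the identity. I expect this passage --- the uniform control of the shifted Cesàro means together with the polynomial approximation --- to be the main obstacle.

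Finally I would close the cycle. For (i)$\Rightarrow$(ii): if $V_D$ is isometric the right-hand side of the identity is $0$, which is precisely the assertion. For (ii)$\Rightarrow$(iii): as $M_n(x)$ is a weighted Cesàro average of the nonnegative numbers $\norm{T^j x}^2$, assuming $\liminf_j\norm{T^j x}^2=2L>0$ would force $\norm{T^j x}^2\ge L$ for all large $j$ and hence $M_n(x)\ge L\,k^{b+1}(n-J)/k^{b+1}(n)\to L>0$, contradicting (ii); therefore $\liminf_n\norm{T^n x}=0$. For (iii)$\Rightarrow$(i) I would argue by contraposition: if $V_D$ is not isometric there is some $x$ with $\norm{Wx}>0$, and then $\norm{T^n x}^2\ge\norm{S^n Wx}^2=\norm{Wx}^2>0$ for all $n$, so $\liminf_n\norm{T^n x}\ge\norm{Wx}>0$, which violates (iii).
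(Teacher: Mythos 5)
Your proposal is correct, and its skeleton coincides with the paper's: both pass to the model supplied by Theorem~\ref{thmBCHMc} (minimal and unique in the critical case by Theorem~\ref{thm can choose VD}\,(i)), split $\norm{T^jx}^2$ orthogonally into the $B_a\otimes I_\FD$ part and the constant isometric part, and reduce everything to the vanishing of the weighted Ces\`aro means of the backward shift. Where you genuinely diverge is in how that vanishing is proved. The paper establishes \eqref{eq lim Bs Ca bdd is 0} directly for $B_s$ by the hard-analysis estimate splitting into the three sums $(I)$, $(II)$, $(III)$ in Lemma~\ref{quadraticShift}, and then transfers it to tensor products and parts via Lemma~\ref{lemma inherit by a part a tensor IR 2}; you instead deduce it softly from the uniform quadratic $(C,b)$-bound plus density of polynomials and a $3\varepsilon$ argument (your two supporting claims check out: the square root of the weighted mean is a Hilbert seminorm, and for a polynomial only finitely many terms survive while each fixed weight satisfies $k^b(n-j)/k^{b+1}(n)\asymp 1/n\to 0$). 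This is legitimate and not circular, since Theorem~\ref{thm Ca to Cesaro bounded} is logically prior to the statement under review; but note that in the paper that theorem is itself deduced from Lemma~\ref{quadraticShift}, which already contains the limit you re-derive, so your route does not remove the hard estimate from the paper's development --- its real content is the structural observation that only the \emph{boundedness} half of that lemma is needed, the limit then coming for free by density. Two further points in your favour: organizing the whole cycle around the single identity $\lim_n M_n(x)=\norm{x}^2-\norm{V_Dx}^2=\norm{Wx}^2$ is tidier than the paper's case-by-case treatment; and your closing step (iii)$\Rightarrow$(i), via the pointwise bound $\norm{T^nx}^2\ge\norm{S^nWx}^2=\norm{Wx}^2$, is in fact sharper than the paper's, which at that point computes the Ces\`aro limit to be $\norm{\ell_2}^2\ne0$ --- an argument that as written refutes (ii) rather than (iii) --- whereas your pointwise lower bound on $\liminf_n\norm{T^nx}$ is exactly what the contraposition of (iii) requires.
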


\begin{rem}
For any $a\in (0,1)$, there are $a$-contractions which are not contractions.
This follows from Theorem~\ref{thm B_s a-contraction} below. 
The same holds for $a>1$. Indeed, if $m<a\le m+1$, where $m$ is an integer, then
it is easy to get (see our forthcoming paper~\cite{ABY2})
that any $(m+1)$-isometry $T$ is also an $a$-isometry,
which means that $(1-t)^a(T^*,T)=0$. 
There are $(m+1)$-isometries that are not contractions, and each of them is an
example of this type.
\end{rem}

\subsection{Contents}
The paper is organized as follows. 
In Section \ref{section Preliminaries on classes defined by operator inequalities} 
we introduce two families of operators in $L(H)$ depending on a fixed function 
$\al(t) = \sum_{n\ge 0} \al_n t^n$: $\weak{\al}$ and $\pweak{\al}$.
Essentially, $\weak{\al}$ is the family of operators $T$ for which we can
define $\al(T^*,T)$, and its subfamily $\pweak{\al}$ consists of those $T$ 
for which $\al(T^*,T) \ge 0$.
We use the superscript notation ``$w$'' in $\weak{\al}$ and $\pweak{\al}$
to make it easier to compare the results from~\cite{BY18} and from the present paper.
Notice that in~\cite{BY18}, only the convergence of

We obtain some
interesting properties of these families and characterize the membership of
backward and forward weighted shifts to them.
In Section~\ref{section On the extensions}, we prove
Theorems~\ref{thm-addendum-to-B-Cl-H-Mc} and~\ref{thm can choose VD}.
The proof of Theorem \ref{Muller-type-thm} is given in
Section~\ref{section Proof of Theorem Muller-type-thm}.
In Section~\ref{Analysis of condition Muller type} we study the scope of
Theorem~\ref{Muller-type-thm}. There
we present examples satisfying the hypothesis of Theorem \ref{Muller-type-thm},
where Theorem~\ref{thmBCHMc} does not apply.
In Section~\ref{section finite defect} we prove
Theorem~\ref{thm-conseq-Carleson}.
The proofs of
Theorems~\ref{thm Ca to Cesaro bounded} and \ref{thm S appears iff}
are given in Section \ref{section Consequences for ergodic theory}.

In our forthcoming paper~\cite{ABY2}, we will study
models up to similarity (instead of unitary equivalence).
There we will consider functions $\al$
that may have zeroes in $\D$. We will prove that under certain hypotheses,
any operator in $\pweak{\al}$ is similar to an $a$-contraction
if $\al(t)$ ``behaves like'' $(1-t)^a$ in a neighborhood of $1$.
We will also study  $a$-contractions in more detail.

\section{Preliminaries on classes defined by operator inequalities}
\label{section Preliminaries on classes defined by operator inequalities}

In this section we introduce the operator classes $\weak{\al}$ and $\pweak{\al}$ associated to
a function $\al(t)= \sum_{n\ge 0} \al_n t^n$,
with $\al_n \in \R$.
After studying them, we analyze why
Hypotheses~\ref{hypo-al}
are natural. Finally, at the end of the section we discuss the membership of weighted shifts in
the classes $\weak{\al}$ and $\pweak{\al}$.

\subsection{The classes $\weak{\al}$ and $\pweak{\al}$}
Before entering into the definitions and basic properties of these classes, let us mention
the following well known result that will be used repeatedly.
\begin{lemma}[see {\cite[Problem 120]{Hal82}}]
\label{Halmos-lemma}
If an increasing sequence $\{A_n\}$ of selfadjoint Hilbert space operators satisfies $A_n\le CI$
for all $n$, where $C$ is a constant, then $\{ A_n \}$ converges in the strong operator topology.
\end{lemma}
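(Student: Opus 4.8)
The plan is to reduce to the nonnegative case, extract from the monotonicity a convergent numerical quantity, and then upgrade this scalar convergence to norm convergence of the vectors by a standard positivity estimate.

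First I would normalize. Since $\{A_n\}$ is increasing, the operators $A_n - A_1$ are nonnegative, still increasing, and bounded above by $(C + \norm{A_1})I$. Because $A_n = (A_n - A_1) + A_1$, it suffices to prove SOT-convergence for the shifted sequence, so I may assume $0 \le A_n \le C'I$ for all $n$, where $C' = C + \norm{A_1}$. Next, I fix $x \in H$ and observe that monotonicity makes $\ip{A_n x}{x}$ a nondecreasing sequence of reals, bounded above by $C'\norm{x}^2$; hence it converges, and in particular it is Cauchy.

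The decisive step is to control $\norm{A_m x - A_n x}$ by this scalar quantity. For $m > n$ I set $B = A_m - A_n$, so that $0 \le B \le A_m \le C'I$. For a nonnegative operator with $B \le C'I$ one has $B^2 \le C'B$, since $B^{1/2}(C'I - B)B^{1/2} = C'B - B^2 \ge 0$. Therefore
\[
\norm{A_m x - A_n x}^2 = \ip{B^2 x}{x} \le C'\ip{Bx}{x} = C'\big(\ip{A_m x}{x} - \ip{A_n x}{x}\big).
\]
As the right-hand side tends to $0$ when $m, n \to \infty$, the sequence $\{A_n x\}$ is Cauchy in $H$ and thus converges. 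Since $x$ is arbitrary, $\{A_n\}$ converges in the strong operator topology (and the uniform bound $\norm{A_n}\le C'$ shows the limit is a bounded self-adjoint operator).

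I expect the only genuinely delicate point to be this passage from convergence of the quadratic form $\ip{A_n x}{x}$ to convergence of the vectors $A_n x$; the normalization and the monotone-convergence observation are routine. The positivity inequality $B^2 \le C'B$ is what powers this passage, and it is the heart of the argument.
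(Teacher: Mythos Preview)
Your proof is correct and is exactly the standard argument for this well-known fact. The paper does not supply its own proof of this lemma; it simply cites \cite[Problem~120]{Hal82} and moves on, so there is nothing to compare against beyond noting that the argument you give is essentially the one found in Halmos.
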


\begin{defi}\label{defi C alpha weak}
Given a function $\al(t) = \sum_{n\ge 0} \al_n t^n$ with $\al_n \in \R$, we put
\begin{equation}\label{eq. defi of C alpha weak}
\weak{\al} := \bigg\{ T \in L(H) \, : 
\sum_{n=0}^{\infty} |\al_n| \norm{T^n x}^2 <\infty \tn{ for every } x \in H \bigg\}.
\end{equation}
Note that
this class of operators is not affected if we change the signs of some coefficients $\al_n$'s.
\end{defi}

If $X$ and $Y$ are two quantities (typically non-negative),
then $X \lesssim Y$ (or $Y \gtrsim X$) will mean that $X \le CY$ 
for some absolute constant $C > 0$.
If the constant $C$ depends on some parameter $p$,
then we write $X \lesssim_{\, p} Y$. 
We write $X \asymp Y$ when both
$X \lesssim Y$ and $Y \lesssim X$.

\begin{prop}\label{prop equivalences for Adm alpha weak}
The following statements are equivalent.
\begin{enumerate}
\item[\tn{(i)}] 
$T\in \weak{\al}$.
\item[\tn{(ii)}] 
$\sum_{n=0}^{\infty} \abs{\al_n} \norm{T^n x}^2 \lesssim \norm{x}^2$ for every $x \in H$.
\item[\tn{(iii)}] 
The series $\sum_{n=0}^{\infty} \abs{\al_n} T^{*n} T^n$ converges 
in the strong operator topology in $L(H)$.
\end{enumerate}
\end{prop}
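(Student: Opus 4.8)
The plan is to establish the cyclic chain of implications $\tn{(i)} \Rightarrow \tn{(ii)} \Rightarrow \tn{(iii)} \Rightarrow \tn{(i)}$. Throughout I would work with the partial sums $A_N := \sum_{n=0}^N \abs{\al_n} T^{*n} T^n$, so that each $A_N$ is a non-negative selfadjoint operator, the sequence $\{A_N\}$ is increasing (we add only non-negative terms $\abs{\al_n} T^{*n}T^n \ge 0$), and $\ip{A_N x}{x} = \sum_{n=0}^N \abs{\al_n} \norm{T^n x}^2$ for every $x \in H$. With this notation, \tn{(i)} says that $\sup_N \ip{A_N x}{x} < \infty$ pointwise, \tn{(ii)} says that $\sup_N \ip{A_N x}{x} \lesssim \norm{x}^2$ uniformly, and \tn{(iii)} says that $\{A_N\}$ converges in SOT.

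The hard part will be $\tn{(i)} \Rightarrow \tn{(ii)}$, that is, upgrading the pointwise finiteness in the definition of $\weak{\al}$ to a uniform estimate; this is where a uniform boundedness argument must be invoked. Applying Banach--Steinhaus directly to the quadratic forms $x \mapsto \ip{A_N x}{x}$ is awkward, so instead I would pass to the square roots $A_N^{1/2}$. The key observation is that $\norm{A_N^{1/2} x}^2 = \ip{A_N x}{x} = \sum_{n=0}^N \abs{\al_n}\norm{T^n x}^2$, which by \tn{(i)} is bounded in $N$ for each fixed $x$. Thus the family of bounded operators $\{A_N^{1/2}\}$ is pointwise bounded, and the uniform boundedness principle yields $M := \sup_N \norm{A_N^{1/2}} < \infty$. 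Consequently $\ip{A_N x}{x} = \norm{A_N^{1/2} x}^2 \le M^2 \norm{x}^2$ for all $N$ and $x$, and letting $N \to \infty$ gives \tn{(ii)} with constant $C = M^2$.

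For $\tn{(ii)} \Rightarrow \tn{(iii)}$, I would simply read \tn{(ii)} as the assertion that $A_N \le C I$ for all $N$. Since $\{A_N\}$ is an increasing sequence of selfadjoint operators that is uniformly bounded above, Lemma~\ref{Halmos-lemma} shows that $\{A_N\}$ converges in the strong operator topology; its limit is exactly the sum of the series in \tn{(iii)}.

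Finally, $\tn{(iii)} \Rightarrow \tn{(i)}$ is immediate: if $A_N \to A$ in SOT, then $A_N x \to A x$ in norm for each $x$, whence $\ip{A_N x}{x} \to \ip{A x}{x}$. As the numbers $\ip{A_N x}{x} = \sum_{n=0}^N \abs{\al_n} \norm{T^n x}^2$ increase to the finite value $\ip{Ax}{x}$, the series $\sum_{n=0}^\infty \abs{\al_n}\norm{T^n x}^2$ converges, which is precisely \tn{(i)}. This closes the cycle and proves the equivalence.
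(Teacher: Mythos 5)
Your proof is correct, and it follows the same overall skeleton as the paper's (partial sums $A_N$, a uniform boundedness argument for $\tn{(i)} \Rightarrow \tn{(ii)}$, Lemma~\ref{Halmos-lemma} for $\tn{(ii)} \Rightarrow \tn{(iii)}$, and the trivial $\tn{(iii)} \Rightarrow \tn{(i)}$), but your execution of the key implication $\tn{(i)} \Rightarrow \tn{(ii)}$ is genuinely different and arguably cleaner. The paper first polarizes: it uses Cauchy--Schwarz and the inequality $ab \le \tfrac12(a^2+b^2)$ to show that $\sum_n \abs{\al_n} \ip{T^n x}{T^n y}$ converges for all $x,y \in H$, deduces that $\{A_N x\}$ is weakly convergent and hence norm-bounded for each $x$ (one hidden application of uniform boundedness), and then applies Banach--Steinhaus a second time to the pointwise bounded family $\{A_N\}$ to get $\sup_N \norm{A_N} < \infty$. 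Your square-root trick short-circuits this: since $\norm{A_N^{1/2}x}^2 = \ip{A_N x}{x}$, hypothesis \tn{(i)} directly gives pointwise boundedness of the operators $A_N^{1/2}$, and a single application of Banach--Steinhaus yields $\sup_N \norm{A_N^{1/2}} = M < \infty$, hence $A_N \le M^2 I$. What your route buys is economy: no bilinear estimate, no detour through weak convergence, one invocation of the uniform boundedness principle instead of two. What the paper's route buys is the auxiliary fact that $\sum_n \abs{\al_n}\ip{T^n x}{T^n y}$ converges absolutely for all pairs $x,y$, which identifies the SOT limit as a bounded sesquilinear form directly; but since this also follows a posteriori from \tn{(ii)} by Cauchy--Schwarz, nothing essential is lost in your version. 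All the remaining steps (monotonicity of $\{A_N\}$, positivity of each summand $\abs{\al_n} T^{*n}T^n$, the passage $N \to \infty$, and the closing implication) are carried out correctly.
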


\begin{proof}
Suppose that (i) is true. Note that for every $x,y \in H$ and $M > N$ we have
\[
\begin{split}
\Abs{ \sum_{n=N+1}^{M} \abs{\al_n} \langle T^n x, T^n y \rangle } 
&\leq 
\sum_{n=N+1}^{M} \abs{\al_n} \norm{T^n x} \norm{T^n y} \\
&\leq 
\frac{1}{2} \left\{ \sum_{n=N+1}^{M} \abs{\al_n} \norm{T^n x}^2 
+ \sum_{n=N+1}^{M} \abs{\al_n} \norm{T^n y}^2
\right\}
\to 0
\end{split}
\]
as $N$ and $M$ go to infinity.
Therefore
\begin{equation}\label{eq. convergence for x y}
\sum_{n=0}^{\infty} \abs{\al_n} \langle T^n x, T^n y \rangle 
\quad \quad \tn{converges (in } \C \tn{)},
\end{equation}
for every $x,y \in H$. Put
\begin{equation}\label{eq. definition AN}
A_N := \sum_{n=0}^{N} \abs{\al_n} T^{*n}T^n \in L(H)
\end{equation}
for every non-negative integer $N$. 
Fix $x \in H$. 
By \eqref{eq. convergence for x y} we know that 
$\langle A_N x, y \rangle$ converges for every $y \in H$. 
This means that the sequence $\{ A_N x \} \ss H$ is weakly convergent. 
Then $\sup_N \norm{A_N x} < \infty$ for any $x \in H$ 
and therefore $\sup_N \norm{A_N} < \infty$. 
Hence (ii) follows with absolute constant $\sup_N \norm{A_N}$.

Now suppose we have (ii). 
This means that the operators $A_N$ given by \eqref{eq. definition AN} 
are uniformly bounded from above. 
So we can apply Lemma \ref{Halmos-lemma} to obtain (iii).

Finally, it is immediate that (iii) implies (i). This completes the proof.
\end{proof}

\begin{cor}
If $T \in \weak{\al}$, then the series
\[
\al(T^*,T) := \sum_{n=0}^{\infty} \al_n T^{*n} T^n
\]
converges in the strong operator topology in $L(H)$.
\end{cor}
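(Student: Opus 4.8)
The plan is to reduce everything to the monotone convergence statement (Lemma~\ref{Halmos-lemma}) by splitting the coefficients into their positive and negative parts. Write $\al_n^+ := \max(\al_n,0)$ and $\al_n^- := \max(-\al_n,0)$, so that $\al_n = \al_n^+ - \al_n^-$, both $\al_n^\pm \ge 0$, and $\abs{\al_n} = \al_n^+ + \al_n^-$. Since $T \in \weak{\al}$, Proposition~\ref{prop equivalences for Adm alpha weak}~(ii) furnishes a constant $C > 0$ with $\sum_{n=0}^\infty \abs{\al_n} \norm{T^n x}^2 \le C \norm{x}^2$ for every $x \in H$.

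Next I would introduce the partial sums
\[
P_N := \sum_{n=0}^{N} \al_n^+ \, T^{*n} T^n, \qquad Q_N := \sum_{n=0}^{N} \al_n^- \, T^{*n} T^n.
\]
Because each summand $\al_n^\pm T^{*n} T^n$ is a non-negative operator, the sequences $\{P_N\}$ and $\{Q_N\}$ are increasing sequences of selfadjoint operators. They are uniformly bounded from above: for every $x \in H$,
\[
\ip{P_N x}{x} = \sum_{n=0}^{N} \al_n^+ \norm{T^n x}^2 \le \sum_{n=0}^{\infty} \abs{\al_n}\norm{T^n x}^2 \le C \norm{x}^2,
\]
so that $P_N \le C I$ for all $N$, and the same bound holds for $Q_N$. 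Lemma~\ref{Halmos-lemma} then yields that both $\{P_N\}$ and $\{Q_N\}$ converge in the strong operator topology, say to operators $P$ and $Q$ in $L(H)$.

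Finally, I would observe that the $N$-th partial sum of the series defining $\al(T^*,T)$ is exactly $P_N - Q_N$, which converges in SOT to $P - Q$. This gives the claimed SOT-convergence. There is no genuine obstacle here: the only point requiring care is that the positive and negative parts are handled \emph{separately} (each via monotonicity), since the full partial sums $\sum_{n=0}^N \al_n T^{*n}T^n$ need not be monotone and so Lemma~\ref{Halmos-lemma} cannot be applied to them directly. One could alternatively derive the result by applying Proposition~\ref{prop equivalences for Adm alpha weak}~(iii) to the two functions with coefficients $\al_n^\pm$, but the explicit decomposition above makes the argument self-contained.
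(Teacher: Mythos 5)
Your proposal is correct and follows essentially the same route as the paper: both decompose the coefficients into $\al_n^{\pm}$, observe that the two resulting partial-sum sequences are increasing and uniformly bounded (you via Proposition~\ref{prop equivalences for Adm alpha weak}~(ii), the paper via part~(iii) of the same proposition — an immaterial difference), and apply Lemma~\ref{Halmos-lemma} to each before taking the difference. Nothing is missing; your version merely spells out the uniform bound that the paper leaves implicit.
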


\begin{proof}
Let $T \in \weak{\al}$. By Proposition~\ref{prop equivalences for Adm alpha weak},
the series $\sum \abs{\al_n} T^{*n} T^n$ converges in SOT. Put
\[
\al_n^+
:=
\begin{cases}
\al_n & \text{if } \al_n \ge 0  \\
0 & \text{if } \al_n < 0
\end{cases},
\quad \quad \quad
\al_n^-
:=
\begin{cases}
0 & \text{if } \al_n \ge 0  \\
-\al_n & \text{if } \al_n < 0
\end{cases}.
\]
Hence
\begin{equation}\label{alpha n mas y menos}
\sum_{n=0}^{N} \al_n T^{*n}T^{n} = \sum_{n=0}^{N} \al_n^ + T^{*n}T^{n} 
- \sum_{n=0}^{N} \al_n^- T^{*n}T^{n}.
\end{equation}
It is immediate (using again Lemma \ref{Halmos-lemma}) that 
both sums on the right hand side of \eqref{alpha n mas y menos} have limits in
SOT as $N \to \infty$, and therefore the corollary follows.
\end{proof}

This corollary allows us to introduce the following class of operators in $L(H)$, 
also depending on $\al$.

\begin{defi} Let
\[
\pweak{\al} := \{ T \in \weak{\al} \, : \, \al(T^*,T) \ge 0 \}.
\]
Sometimes, by abuse of notation, 
we will simply write $\al(T^*,T) \ge 0$ instead of $T \in \pweak{\al}$. 
In particular, this means that $T \in \weak{\al}$.
\end{defi}

\begin{prop}\label{prop_immediate_properties_Calpha}
\quad
\begin{enumerate}[\rm (i)]
\item
If $T \in \pweak{\al}$, then any part of $T$ also belongs to $\pweak{\al}$.
\item
If $T_1, T_2 \in \pweak{\al}$,
then $T_1 \oplus T_2 \in \pweak{\al}$.
\item
If $T \in \pweak{\al}$, then $T \otimes I_\CE$
(where $I_\CE$ is the identity operator on some Hilbert space $\CE$)
also belongs to $\pweak{\al}$.
\end{enumerate}
\end{prop}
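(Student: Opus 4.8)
The plan is to reduce all three statements to the scalar identity
\[
\ip{\al(T^*,T)x}{x} = \sum_{n=0}^{\infty} \al_n \norm{T^n x}^2 ,
\]
valid for every $x\in H$ whenever $T\in\weak{\al}$; this follows by pairing $x$ against the SOT-convergent series defining $\al(T^*,T)$ and using that $\ip{T^{*n}T^n x}{x}=\norm{T^n x}^2$. Membership in $\weak{\al}$ will be verified through the quantitative bound of Proposition~\ref{prop equivalences for Adm alpha weak}\,(ii), namely $\sum_n|\al_n|\norm{T^n x}^2\lesssim\norm{x}^2$, and positivity will then be read off directly from the displayed identity. In each case the computation of $\norm{(\cdot)^n\xi}^2$ for the relevant operator is what makes both sides transparent.

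For (i), let $L$ be a $T$-invariant subspace and $T'=T|L$. Since $L$ is invariant, $T'^n x=T^n x$ for every $x\in L$, so $\sum_n|\al_n|\norm{T'^n x}^2=\sum_n|\al_n|\norm{T^n x}^2\lesssim\norm{x}^2$, which gives $T'\in\weak{\al}$; and the identity then yields $\ip{\al(T'^*,T')x}{x}=\sum_n\al_n\norm{T^n x}^2=\ip{\al(T^*,T)x}{x}\ge 0$. For (ii), writing $x=(x_1,x_2)$ and using $\norm{(T_1\oplus T_2)^n x}^2=\norm{T_1^n x_1}^2+\norm{T_2^n x_2}^2$, both the summability bound and the positivity split as a sum over the two coordinates, each controlled by the hypotheses on $T_1$ and $T_2$.

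For (iii), I would fix an orthonormal basis $\{e_i\}$ of $\CE$ and expand $\xi\in H\otimes\CE$ as $\xi=\sum_i x_i\otimes e_i$ with $\sum_i\norm{x_i}^2=\norm{\xi}^2$. Then $\norm{(T\otimes I_\CE)^n\xi}^2=\sum_i\norm{T^n x_i}^2$, and summing against $|\al_n|$ while invoking Proposition~\ref{prop equivalences for Adm alpha weak}\,(ii) coordinatewise gives
\[
\sum_n|\al_n|\,\norm{(T\otimes I_\CE)^n\xi}^2\le C\sum_i\norm{x_i}^2=C\norm{\xi}^2 ,
\]
so $T\otimes I_\CE\in\weak{\al}$.

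The main obstacle — the only genuinely delicate point — is the positivity in (iii): one wants $\ip{\al((T\otimes I_\CE)^*,T\otimes I_\CE)\xi}{\xi}=\sum_i\ip{\al(T^*,T)x_i}{x_i}\ge 0$, which requires interchanging the sum over $n$ with the sum over the basis index $i$. Because the $\al_n$ need not have a fixed sign, I would license this interchange by splitting $\al_n=\al_n^+-\al_n^-$ as in the corollary preceding this proposition, applying Tonelli to each of the two nonnegative double series (both finite by the $\weak{\al}$-bound just established), and recombining. Once the interchange is justified, each term $\ip{\al(T^*,T)x_i}{x_i}$ is nonnegative by hypothesis, and the conclusion follows.
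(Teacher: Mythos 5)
Your proof is correct and follows essentially the same route as the paper's: the paper reduces membership in $\pweak{\al}$ to the scalar conditions $\sum_n |\al_n|\norm{T^n x}^2<\infty$ and $\sum_n \al_n\norm{T^n x}^2\ge 0$, dispatches (i) and (ii) as immediate, and proves (iii) by viewing $T\otimes I_\CE$ as an orthogonal sum of $\dim\CE$ copies of $T$ and invoking the Pythagoras theorem. The only difference is that you carefully justify the interchange of the sums over $n$ and over the basis index via the $\al_n^{\pm}$ splitting and Tonelli, a point the paper leaves implicit (``clearly''), and your justification is valid.
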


\begin{proof}
Note that an operator $T\in L(H)$ belongs to $\pweak{\al}$ if and only if
\[
\sum_{n=0}^{\infty} \abs{\al_n} \norm{T^n x}^2 < \infty
\qquad \tn{ and } \qquad
\sum_{n=0}^{\infty} \al_n \norm{T^n x}^2 \ge 0
\]
for every $x \in H$. Then (i) and (ii) are immediate.
For (iii), observe that if $d:=\dim \CE \le \infty$, then the orthogonal sum of
$d$ copies of an operator in $\pweak{\al}$ is clearly in $\pweak{\al}$
(by the Pythagoras Theorem).
\end{proof}

The following proposition will serve us to discuss why our 
Hypotheses~\ref{hypo-al} are natural.

\begin{prop}\label{prop extra condition alpha in AW}
Let $T \in \pweak{\al}$. If $\al \not \in A_W$, then $\si(T) \ss \D$.
\end{prop}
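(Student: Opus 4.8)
The plan is to prove the contrapositive in spectral-radius form. Since $\si(T)\ss\D$ is equivalent to $r(T)<1$, where $r(T)$ denotes the spectral radius, I would assume $r(T)\ge 1$ and derive that $\al\in A_W$ (i.e.\ $\sum_n\abs{\al_n}<\infty$), contradicting the hypothesis $\al\notin A_W$. Note that the positivity $\al(T^*,T)\ge0$ plays no role here; only the membership $T\in\weak{\al}$ will be used, since $\pweak{\al}\ss\weak{\al}$.

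I would first invoke Proposition~\ref{prop equivalences for Adm alpha weak}, which provides a constant $C>0$ with
\[
\sum_{n=0}^{\infty}\abs{\al_n}\norm{T^n x}^2\le C\norm{x}^2\qquad(x\in H).
\]
Assuming $r(T)\ge1$, I would pick $\la\in\si(T)$ of maximal modulus, so that $\abs\la=r(T)\ge1$. A point of maximal modulus cannot be interior to the compact set $\si(T)$, hence $\la\in\partial\si(T)$; and since $\partial\si(T)$ is contained in the approximate point spectrum, there exist unit vectors $x_k$ with $\norm{(T-\la)x_k}\to0$.

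The key elementary step is then to control the powers along this sequence: for each fixed $n$, writing $T^n-\la^n=(T-\la)\sum_{j=0}^{n-1}\la^j T^{n-1-j}$ gives
\[
\norm{(T^n-\la^n)x_k}\le\Big\|\sum_{j=0}^{n-1}\la^j T^{n-1-j}\Big\|\,\norm{(T-\la)x_k}\xrightarrow[k\to\infty]{}0,
\]
so that $\norm{T^n x_k}\to\abs\la^{\,n}$ as $k\to\infty$. Finally I would fix $N$ and truncate: for every $k$ one has $\sum_{n=0}^{N}\abs{\al_n}\norm{T^n x_k}^2\le C$, and letting $k\to\infty$ in this finite sum yields $\sum_{n=0}^{N}\abs{\al_n}\abs\la^{\,2n}\le C$. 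Since $\abs\la\ge1$ this forces $\sum_{n=0}^{N}\abs{\al_n}\le C$ for all $N$, whence $\sum_n\abs{\al_n}\le C<\infty$ and $\al\in A_W$ — the sought contradiction, so $r(T)<1$.

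I expect the only delicate point to be the production of the approximate eigenvectors: one cannot hope for a genuine eigenvector at $\la$, so the argument must go through the fact that a spectral point of maximal modulus lies on $\partial\si(T)\ss\si_{\tn{ap}}(T)$. Everything else amounts to interchanging a limit with a finite sum, and the hypothesis $\al\notin A_W$ enters exactly to rule out the conclusion $\sum_n\abs{\al_n}<\infty$.
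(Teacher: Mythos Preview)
Your proof is correct and follows essentially the same approach as the paper: both use the uniform bound from Proposition~\ref{prop equivalences for Adm alpha weak}, pick a spectral point $\la$ of maximal modulus (hence in $\partial\si(T)\subset\si_{\tn{ap}}(T)$), and use approximate eigenvectors to force $\sum_{n\le N}\abs{\al_n}\abs{\la}^{2n}\le C$. The only cosmetic difference is that the paper fixes $N$ with $\sum_{n\le N}\abs{\al_n}>C+1$ and then chooses a single approximate eigenvector with sufficiently small defect, whereas you pass to the limit along a sequence $x_k$; your formulation via the contrapositive is arguably cleaner.
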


\begin{proof}
Let $T \in \pweak{\al}$, where $\al \not \in A_W$ (that is, $\sum \abs{\al_n} = \infty$). 
By Proposition \ref{prop equivalences for Adm alpha weak} 
we know that there exists a constant $C>0$ such that
\begin{equation}\label{eq unif bdd for norm x le 1}
\sum_{n=0}^{\infty} \abs{\al_n} \norm{T^n x}^2 \le C
\end{equation}
for every $x \in H$ with $\norm{x} = 1$.

Suppose that $T$ has spectral radius $\rho(T) \ge 1$. 
Let $\la$ be any point of $\si(T)$ such that $\abs{\la} = \rho(T)$. 
Then $\la$ belongs to the boundary of the spectrum of $T$ 
and therefore it belongs to the approximate point spectrum. 
Put $R := \abs{\la}^2 = \rho(T)^2 \ge 1$. 
Fix an integer $N$ sufficiently large so that
\[
\sum_{n=0}^{N} \abs{\al_n} > C+1.
\]
Now, choose a unit approximate eigenvector $h \in H$ corresponding to $\la$
such that $\norm{Th-\la h}$ is sufficiently small, so that
\[
\Abs{\norm{T^m h}^2 - \abs{\la}^{2m}} < \left( \sum_{n=0}^{N} \abs{\al_n} \right)^{-1},
\qquad m=0, 1, \ldots , N.
\]
Then
\[
\Abs{\sum_{n=0}^{N} \abs{\al_n} R^n - \sum_{n=0}^{N} \abs{\al_n} \norm{T^n h}^2} 
\le \sum_{n=0}^{N} \abs{\al_n} \Abs{R^n - \norm{T^n h}^2} < 1,
\]
and therefore
\[
\sum_{n=0}^{N} \abs{\al_n} \norm{T^n h}^2 
\ge \bigg(\sum_{n=0}^{N} \abs{\al_n} R^n\bigg) - 1 
\ge \bigg(\sum_{n=0}^{N}\abs{\al_n}\bigg) - 1 > C.
\]
But this contradicts \eqref{eq unif bdd for norm x le 1}. 
Hence $\rho(T)$ must be strictly less that $1$, 
that is, $\si(T) \ss \D$, as we wanted to prove.
\end{proof}

The next result follows immediately imitating the above proof. 
We denote by $r(\al)$ the radius of convergence of the series for $\al$.

\begin{prop}
If $T \in \pweak{\al}$, then $\rho(T)^2 \le r(\al)$.
\end{prop}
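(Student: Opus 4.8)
The plan is to argue by contradiction, closely following the proof of Proposition~\ref{prop extra condition alpha in AW}. Suppose that $\rho(T)^2 > r(\al)$ and set $R := \rho(T)^2$. Since $R$ exceeds the radius of convergence of $\sum \al_n t^n$, the series $\sum_{n=0}^{\infty} \abs{\al_n} R^n$ diverges. On the other hand, because $T \in \pweak{\al} \ss \weak{\al}$, Proposition~\ref{prop equivalences for Adm alpha weak}~(ii) furnishes a constant $C > 0$ with $\sum_{n=0}^{\infty} \abs{\al_n} \norm{T^n x}^2 \le C$ for every unit vector $x \in H$.

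Next I would exploit the spectral radius exactly as before. Choosing $\la \in \si(T)$ with $\abs{\la} = \rho(T)$, this point lies on the topological boundary of $\si(T)$ and hence in the approximate point spectrum, so it admits unit approximate eigenvectors; note that $\abs{\la}^2 = R$. Using the divergence of $\sum \abs{\al_n} R^n$, I fix an integer $N$ so large that $\sum_{n=0}^{N} \abs{\al_n} R^n > C+1$. Then I select a unit approximate eigenvector $h$ for which $\norm{Th - \la h}$ is small enough to guarantee the finitely many estimates $\Abs{\norm{T^m h}^2 - R^m} < \big(\sum_{n=0}^{N} \abs{\al_n}\big)^{-1}$ for $m = 0, 1, \ldots, N$.

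These estimates give $\Abs{\sum_{n=0}^{N} \abs{\al_n} R^n - \sum_{n=0}^{N} \abs{\al_n} \norm{T^n h}^2} < 1$, whence $\sum_{n=0}^{N} \abs{\al_n} \norm{T^n h}^2 > \big(\sum_{n=0}^{N} \abs{\al_n} R^n\big) - 1 > C$, contradicting the uniform bound of the previous paragraph. Therefore $\rho(T)^2 \le r(\al)$. The argument is essentially a verbatim repetition of the earlier one, the only change being that the role played there by $\sum \abs{\al_n} = \infty$ is now played by the divergence of $\sum \abs{\al_n} R^n$ for $R > r(\al)$; consequently I anticipate no genuine obstacle. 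The single point deserving care is the passage from ``$\la$ of maximal modulus in $\si(T)$'' to ``$\la$ is an approximate eigenvalue'', which rests on the standard fact that the boundary of the spectrum is contained in the approximate point spectrum, together with the possibility of making the finitely many polynomial-in-$T$ estimates hold simultaneously by taking $h$ sufficiently close to an eigenvector.
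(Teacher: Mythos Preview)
Your proof is correct and is exactly what the paper intends: the paper's ``proof'' is the single sentence that the result follows immediately by imitating the proof of Proposition~\ref{prop extra condition alpha in AW}, and your argument carries out precisely that imitation, replacing the divergence of $\sum_n |\al_n|$ by the divergence of $\sum_n |\al_n| R^n$ for $R=\rho(T)^2>r(\al)$.
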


One can compare the above two propositions with \cite[Corollary 22]{Stank2013},
which concerns the case when $T$ satisfies an equality $\al(T^*,T) = 0$.

\subsection{Analysis of the Hypotheses~\ref{hypo-al}}
\label{subsection Analysis of the hypothesis}
Observe that Hypotheses~\ref{hypo-al} do not restrict to the Nevanlinna-Pick case.
Let us explain briefly why these hypothesis are natural.

First of all, the assumption that $\al$ belongs to $A_W$
is natural due to Proposition~\ref{prop extra condition alpha in AW}.
To assure that $k = 1/ \al$ is analytic in $\D$,
we need that $\al$ do not vanish in $\D$.
In order to guarantee that we can obtain a reproducing kernel Hilbert space
$\CR_k$ of analytic functions, we need to assume that $k_n > 0$ for every $n \ge 0$.
(See Remark~\ref{rem duality Rk and Hk} below.)
The assumption $k_0 = 1$ is just a normalization of the coefficients.
Finally, note that in Theorem~\ref{Muller-type-thm}, 
which is our new source of examples when compared with 
Theorem~\ref{thmBCHMc}, we need that $k \in A_W$.
However, this assumption excludes automatically
the critical case (when $\al(1) = 0$).
Therefore, it is natural to just make the assumption that $k$ is analytic in $\D$, 
so we can still consider both cases: critical and subcritical.

As we already mentioned in the Introduction,
in~\cite{ABY2} we will drop
the assumption that $\al$ does not vanish on $\D$.

\subsection{The weighted shifts $B_\ka$ and $F_\ka$}
Given  a sequence of positive numbers $\{\ka_n: n\ge 0\}$,
we denote by $\CH_\ka$ the corresponding weighted Hilbert
space of power series $f(t)=\sum_{n=0}^\infty f_n t^n$ with the norm
\[
\|f\|_{\CH_\ka} := \bigg(\sum_{n=0}^\infty  |f_n|^2 \ka_n \bigg)^{1/2}.
\]
Obviously, the monomials $e_n(t) := t^n$, for $n \ge 0$, 
form an orthogonal basis on $\CH_\ka$, and
\begin{equation}\label{eq norm en}
\norm{e_n}_{\CH_\ka}^2 = \ka_n.
\end{equation}
The backward and forward shifts $B_\ka$ and $F_\ka$ on $\CH_\ka$ are defined by
\begin{equation}\label{eq defi B_la and F_la for functions}
B_\ka f(t) := \dfrac{f(t) - f(0)}{t}
\quad \tn{ and } \quad
F_\ka f(t) := t f(t)
\qquad \qquad (\forall f \in \CH_\ka),
\end{equation}
or equivalently
\begin{equation}\label{eq defi B_la and F_la for e_n}
B_\ka e_n :=
\begin{cases}
e_{n-1}, & \text{if}\ n \ge 1 \\
0, & \text{if}\ n =0
\end{cases}
\quad \text{and}
\quad F_\ka e_n := e_{n+1}
\qquad \qquad (\forall n \geq 0).
\end{equation}
It is immediate that
$\norm{B_\ka}^2 = \sup_{n \ge 0} \ka_n/\ka_{n+1}$.
Hence $B_\ka$ is bounded if and only if
\begin{equation}
\label{eq backward bdd}
\dfrac{\ka_n}{\ka_{n+1}} \le C
\quad \quad (\forall n \ge 0),
\end{equation}
for a constant $C>0$.
Analogously,
$\norm{F_\ka}^2 = \sup_{n \ge 0} \ka_{n+1} / \ka_n$, and therefore
$F_\ka$ is bounded if and only if

\begin{equation}
\label{eq forward bdd}
0<c \le \dfrac{\ka_n}{\ka_{n+1}}
\quad \quad (\forall n \ge 0), 
\end{equation}
for some constant $c$.

\begin{rem}
\label{rem duality Rk and Hk}
At the beginning of Subsection~\ref{subsec conseq} we discussed the 
duality of the spaces $\CH_k$ and $\CR_k$, where $k(t)$ was, as usual, 
the function $1/\al(t)$. 
Of course, if we replace $k$ with any other function
$\ka(t)=\sum_{n\ge 0} \ka_n t^n$ (where $\ka_n>0$), 
the same duality will hold for the spaces $\CH_\ka$ and $\CR_\ka$. 
\end{rem}

\begin{nota}\label{Notation Bs}
Let us mention here a convenient notation that will be used in
Section~\ref{section Consequences for ergodic theory}.
When $\{ \ka_n \}$ is precisely the sequence of Taylor coefficients of
the function $(1-t)^{-s}$ for some $s>0$, that is,
\[
\ka_0 = 1
\quad \tn{ and } \quad
\ka_n = \dfrac{s(s+1) \cdots (s+n-1)}{n!}
\quad \tn{ for } n \ge 1,
\]
we denote the space $\CH_\ka$ by $\CH_s$, emphasizing the exponent $s$.
In the same way we use $B_s$ and $F_s$.
\end{nota}

\begin{lemma}\label{lemma T in Cal for backward and forward}
Let $T$ be one of the operators $B_\ka$ or $F_\ka$,
for some $\ka(t)=\sum_{n\ge 0} \ka_n t^n$.
Suppose that $T$ is bounded 
(i.e., assume \eqref{eq backward bdd} or \eqref{eq forward bdd}, respectively).
Then:
\begin{enumerate}
\item[\tn{(i)}]
$T \in \weak{\al}$ if and only if
\begin{equation}\label{eq Adm alpha for basis}
\sup_{m\ge 0} \left\{ \sum_{n=0}^{\infty} \abs{\al_n}
\dfrac{\norm{T^n e_m}^2}{\norm{e_m}^2} \right\} < \infty.
\end{equation}
\item[\tn{(ii)}]
Suppose that $T \in \weak{\al}$. Then $T \in \pweak{\al}$ if and only if
\begin{equation}\label{eq C alpha for basis}
\sum_{n=0}^{\infty} \al_n \norm{T^n e_m}^2 \ge 0 \qquad (\forall m \ge 0).
\end{equation}
\end{enumerate}
\end{lemma}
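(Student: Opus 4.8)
The plan is to exploit the fact that, for a weighted shift, each power $T^n$ acts diagonally with respect to the orthogonal basis $\{e_m\}$. In both cases $T=B_\ka$ and $T=F_\ka$ one has $T^n e_m = e_{m-n}$ (understood as $0$ when $m<n$) for the backward shift and $T^n e_m = e_{m+n}$ for the forward shift; in particular, for fixed $n$ the nonzero vectors among $\{T^n e_m\}_{m\ge 0}$ are pairwise distinct basis vectors, so this family is orthogonal. Writing $x=\sum_m x_m e_m$, I would first record the resulting identity
\[
\norm{T^n x}^2 = \sum_{m=0}^\infty \abs{x_m}^2 \norm{T^n e_m}^2,
\]
which is the only special feature of weighted shifts needed below.

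For part (i), one implication is immediate: if $T \in \weak{\al}$, then by Proposition~\ref{prop equivalences for Adm alpha weak}(ii) there is a constant $C$ with $\sum_n \abs{\al_n}\norm{T^n x}^2 \le C\norm{x}^2$ for all $x$; taking $x=e_m$ and dividing by $\norm{e_m}^2$ yields \eqref{eq Adm alpha for basis} with the same constant. For the converse, let $M$ be the supremum in \eqref{eq Adm alpha for basis}. Substituting the diagonal identity and interchanging the two nonnegative sums (Tonelli), I would estimate, for $x=\sum_m x_m e_m$,
\[
\sum_{n=0}^\infty \abs{\al_n}\norm{T^n x}^2
= \sum_{m=0}^\infty \abs{x_m}^2 \sum_{n=0}^\infty \abs{\al_n}\norm{T^n e_m}^2
\le M \sum_{m=0}^\infty \abs{x_m}^2 \norm{e_m}^2 = M\norm{x}^2,
\]
which is precisely condition (ii) of Proposition~\ref{prop equivalences for Adm alpha weak}, hence $T\in\weak{\al}$.

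For part (ii), I would first invoke the standing hypothesis $T\in\weak{\al}$. By Proposition~\ref{prop equivalences for Adm alpha weak} the double series $\sum_{n,m}\abs{\al_n}\abs{x_m}^2\norm{T^n e_m}^2$ converges, since it equals $\sum_n \abs{\al_n}\norm{T^n x}^2<\infty$; thus Fubini's theorem justifies interchanging the order of summation even though the $\al_n$ may change sign, giving
\[
\sum_{n=0}^\infty \al_n \norm{T^n x}^2
= \sum_{m=0}^\infty \abs{x_m}^2 \sum_{n=0}^\infty \al_n \norm{T^n e_m}^2.
\]
If \eqref{eq C alpha for basis} holds, every inner sum is nonnegative, so the right-hand side is $\ge 0$ for all $x$, i.e.\ $\al(T^*,T)\ge 0$ and $T\in\pweak{\al}$. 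Conversely, specializing to $x=e_m$ recovers $\sum_n \al_n\norm{T^n e_m}^2\ge 0$, which is \eqref{eq C alpha for basis}.

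The computations are routine once the diagonal identity is in place; the only point that genuinely requires care is the interchange of summation in part (ii), where the signs of $\al_n$ obstruct a naive Tonelli argument. This is exactly where the hypothesis $T\in\weak{\al}$ is used — it supplies the absolute convergence needed to apply Fubini — and it explains why part (ii) is stated conditionally on membership in $\weak{\al}$ rather than for an arbitrary bounded shift.
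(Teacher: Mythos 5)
Your proposal is correct and follows essentially the same route as the paper: the diagonal identity $\norm{T^n x}^2 = \sum_m \abs{x_m}^2 \norm{T^n e_m}^2$ from orthogonality of the shifted basis vectors, reduction to Proposition~\ref{prop equivalences for Adm alpha weak}, and an interchange of summation justified by nonnegativity in part (i) and by the absolute convergence supplied by $T \in \weak{\al}$ in part (ii). Your explicit remark on why the Fubini step needs the hypothesis $T \in \weak{\al}$ makes precise what the paper's proof only indicates by reference to its earlier computation.
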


\begin{proof}
(i) Let $T \in \weak{\al}$. 
By Proposition \ref{prop equivalences for Adm alpha weak} (ii) we have
\[
\sum_{n=0}^{\infty} \abs{\al_n} \norm{T^n f}^2 \lesssim \norm{f}^2,
\]
for every function $f \in \CH_\ka$. 
Taking the vectors of the basis $f = e_m$ we obtain \eqref{eq Adm alpha for basis}.

Conversely,
let us assume now \eqref{eq Adm alpha for basis}. Fix a function $f\in \CH_\ka$. Then
\[
T^n f = \sum_{m=0}^{\infty} f_m T^n e_m
\qquad \qquad (\forall n \ge 0),
\]
where the series is orthogonal. Therefore
\begin{equation}\label{eq Back Forw change summation}
\begin{split}
\sum_{n=0}^{\infty} \abs{\al_n} \norm{T^n f}^2
&= 
\sum_{n=0}^{\infty} \abs{\al_n} \sum_{m=0}^{\infty} \abs{f_m}^2 \norm{T^n e_m}^2
= \sum_{n=0}^{\infty} \abs{\al_n} \sum_{m=0}^{\infty} \abs{f_m}^2 
\norm{e_m}^2 \frac{\norm{T^n e_m}^2}{\norm{e_m}^2} \\
&= 
\sum_{m=0}^{\infty} \abs{f_m}^2 \norm{e_m}^2 
\sum_{n=0}^{\infty} \abs{\al_n} \frac{\norm{T^n e_m}^2}{\norm{e_m}^2}
\lesssim 
\sum_{m=0}^{\infty} \abs{f_m}^2 \norm{e_m}^2 < \infty,
\end{split}
\end{equation}
where \eqref{eq Adm alpha for basis} allows us to justify the change of 
the summation indexes in the last equality. Hence $T \in \weak{\al}$.

(ii) Let $T \in \weak{\al}$. If $T \in \pweak{\al}$, then obviously 
\eqref{eq C alpha for basis} follows. 
For the converse implication, note that similarly to
\eqref{eq Back Forw change summation} we get
\[
\sum_{n=0}^{\infty} \al_n \norm{T^n f}^2
=
\sum_{m=0}^{\infty} \abs{f_m}^2 \norm{e_m}^2 \sum_{n=0}^{\infty} \al_n
\frac{\norm{T^n e_m}^2}{\norm{e_m}^2},
\]
so \eqref{eq C alpha for basis} implies that $T \in \pweak{\al}$.
\end{proof}

Writing down this lemma for $B_\ka$ and $F_\ka$ separately,
we immediately get the next two results.

\begin{thm}\label{thm characterization backward in Calpha}
Let $\ka(t)=\sum_{n\ge 0} \ka_n t^n$, such that the coefficients $\{ \ka_n \}$ satisfy
\eqref{eq backward bdd}.
Set $\be(t) = \sum_{n\ge 0} \be_n t^n$ with $\be_n = \abs{\al_n}$. 
Put $\ga(t) = \be(t) \ka(t)$. Then:
\begin{enumerate}
\item[\tn{(i)}] 
$B_\ka \in \weak{\al}$ if and only if
\[
\sup_{m\ge 0} \left\{ \dfrac{\ga_m}{\ka_m} \right\} < \infty.
\]
\item[\tn{(ii)}] 
Suppose that $B_\ka \in \weak{\al}$. 
Then $B_\ka \in \pweak{\al}$ if and only if all the Taylor coefficients of 
$\al(t) \ka(t)$ are non-negative.
\end{enumerate}
\end{thm}

The next statement explains the meaning of Hypotheses~\ref{hypo-adm}. 

\begin{cor}
\label{cor-Bk-in-pweak}
Suppose that Hypotheses~\ref{hypo-adm} hold, and let $T=B_k\otimes I_\CE$. Then 

\begin{enumerate}
\item[\tn{(i)}] $T\in\pweak{\al}$; 

\item[\tn{(ii)}] $\al(T^*, T)f =f_0$, $f=\sum f_n z^n\in \CH_k\otimes \CE$;  

\item[\tn{(iii)}] The corresponding operator 
$V_D:\CH_k\otimes \CE \to \CH_k\otimes \CE$ is the identity operator. 
\end{enumerate}
\end{cor}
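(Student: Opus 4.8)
The plan is to derive all three parts from Theorem~\ref{thm characterization backward in Calpha}, the key algebraic fact being that $\al(t)k(t)\equiv 1$, which is what makes the relevant series collapse. For part (i), I would first reduce to the scalar case: by Proposition~\ref{prop_immediate_properties_Calpha}(iii) it suffices to prove $B_k\in\pweak{\al}$, since then $B_k\otimes I_\CE\in\pweak{\al}$ automatically. I then apply Theorem~\ref{thm characterization backward in Calpha} with $\ka=k$. Boundedness of $B_k$ is exactly the hypothesis $k_n/k_{n+1}\le C'$ of Hypotheses~\ref{hypo-adm}; the membership $B_k\in\weak{\al}$ follows from part (i) of that theorem because $\sup_m \ga_m/k_m\le C''<\infty$ is the second condition of Hypotheses~\ref{hypo-adm}; and $B_k\in\pweak{\al}$ follows from part (ii) of that theorem, since the Taylor coefficients of $\al(t)k(t)=\al(t)/\al(t)=1$ are $1,0,0,\dots$, hence non-negative.

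For part (ii), since $T\in\weak{\al}$ the series $\al(T^*,T)$ converges in SOT, so for $f,g\in\CH_k\otimes\CE$ I may compute $\langle \al(T^*,T)f,g\rangle=\sum_n\al_n\langle T^nf,T^ng\rangle$, the rearrangements below being justified by the absolute convergence that $T\in\weak{\al}$ supplies (via Cauchy--Schwarz). Writing $f=\sum_m f_m w^m$, $g=\sum_m g_m w^m$ with $f_m,g_m\in\CE$, one has $T^nf(w)=\sum_m f_{m+n}w^m$, so $\langle T^nf,T^ng\rangle=\sum_m k_m\langle f_{m+n},g_{m+n}\rangle_\CE$. Reindexing by $j=m+n$ collects the coefficients of $\al k$:
\[
\langle \al(T^*,T)f,g\rangle=\sum_{j\ge0}\langle f_j,g_j\rangle_\CE\sum_{n=0}^j\al_n k_{j-n}=\sum_{j\ge0}[\al k]_j\,\langle f_j,g_j\rangle_\CE=\langle f_0,g_0\rangle_\CE,
\]
using $[\al k]_j=\delta_{j0}$ and $k_0=1$. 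Since the right-hand side equals $\langle f_0,g\rangle_{\CH_k\otimes\CE}$ for the constant function $f_0$, this yields $\al(T^*,T)f=f_0$. In particular $\langle\al(T^*,T)f,f\rangle=\|f_0\|^2\ge0$, which re-proves the positivity asserted in (i).

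For part (iii), part (ii) shows that $P:=\al(T^*,T)$ is the orthogonal projection onto the subspace of constant functions, which I identify with $\CE$; being a projection it is its own non-negative square root, so $D=P^{1/2}=P$ and $\FD=\overline{DH}=\CE$. It then remains to evaluate $V_D$. The $n$-th Taylor coefficient at $z=0$ of the $\FD$-valued function $z\mapsto V_Dx(z)=D(I_H-zT)^{-1}x$ is $DT^nx$, and since $DT^nx$ is the constant term of $T^nx(w)=\sum_m x_{m+n}w^m$, namely $x_n$, I obtain $V_Dx=\sum_n x_n z^n=x$. Hence $V_D$ is the identity operator.

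The one point I expect to require care is in part (iii): correctly identifying $\FD$ with $\CE$ through the projection $P$, and keeping the two variables distinct, namely the internal variable $w$ of $\CH_k$ on which $T$ acts versus the resolvent variable $z$, so that reading off $DT^nx=x_n$ as the $n$-th coefficient of $V_Dx$ is unambiguous. I would justify that reading by noting that $(I_H-zT)^{-1}x=\sum_m z^mT^mx$ converges in norm for $z$ near $0$, which already determines the Taylor coefficients and hence pins down $V_Dx$ as an element of $\CH_k\otimes\FD$. Beyond this, everything is bookkeeping around the identity $\al k=1$.
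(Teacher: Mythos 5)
Your proposal is correct and takes essentially the same route as the paper: reduce to the scalar shift $B_k$ via Proposition~\ref{prop_immediate_properties_Calpha}, read off $B_k\in\weak{\al}$ and $B_k\in\pweak{\al}$ from Theorem~\ref{thm characterization backward in Calpha} together with the identity $\al(t)k(t)=1$, identify $\al(T^*,T)$ as the orthogonal projection onto the constants so that $D=P$, $\FD=\CE$, and $V_D$ is the identity. The only cosmetic difference is that you verify (ii) by computing the bilinear form $\langle \al(T^*,T)f,g\rangle$ for general $f,g$, whereas the paper tests the basis vectors $e_m$ and uses diagonality; both rest on the same rearrangement, justified by $T\in\weak{\al}$.
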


Indeed, consider first the case when $T=B_k$. 
Hypotheses~\ref{hypo-adm} imply that 
$B_k\in\weak{\al}$. 
Equality 
$\al(t)k(t)=1$ gives that 
$\sum_{n=0}^{\infty} \al_n\|T^n e_0\|^2=1$ and 
$\sum_{n=0}^{\infty} \al_n\|T^n e_m\|^2=0$  
if $m\ge 1$. 
This implies (ii) for this case. 
In particular, $\al(T^*, T)\ge 0$ (that is, $B_k\in\pweak{\al}$),  
and (iii) follows. 
Finally, the operator $T=B_k\otimes I_\CE$ can be seen as an orthogonal sum 
of $\dim \CE$ copies of $B_k$, which gives the general case.

\medskip 

Before restating Lemma \ref{lemma T in Cal for backward and forward} 
for the forward shift $F_\ka$, we need to introduce some notation.

\begin{nota}
Given a sequence of real numbers
$\La = \{ \La_m \}_{m \ge 0}$, we denote by
$\nabla \La$ the
sequence whose $m$-th term, for $m\ge 0$, is given by
$(\nabla \La)_m = \La_{m+1}$.
In general, if $\be(t) = \sum \be_n t^n$ is an analytic function,
we denote by $\be(\nabla)\La$
the sequence whose $m$-th term is given by
\[
(\be(\nabla)\La)_m=\be(\nabla)\La_m := \sum_{n=0}^{\infty} \be_n \La_{m+n}, 
\]
whenever the series on the right hand side converges for every $m \ge 0$.
\end{nota}

\begin{thm}\label{thm characterization forward in Calpha}
Let $\ka(t)=\sum_{n\ge 0} \ka_n t^n$ be a function such that the coefficients 
$\{ \ka_n \}$ satisfy
\eqref{eq forward bdd}.
Set $\be(t) = \sum_{n\ge 0} \be_n t^n$ with $\be_n = \abs{\al_n}$.
Then:
\begin{enumerate}
\item[\tn{(i)}] 
$F_\ka \in \weak{\al}$ if and only if
\[
\sup_{m\ge 0} \left\{ \dfrac{\be(\nabla) \ka_m}{\ka_m} \right\} < \infty.
\]
\item[\tn{(ii)}] 
Suppose that $F_\ka \in \weak{\al}$. 
Then $F_\ka \in \pweak{\al}$ if and only if $\al(\nabla) \ka_m \ge 0$ 
for every $m \ge 0$.
\end{enumerate}
\end{thm}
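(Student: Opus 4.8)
The plan is to apply Lemma~\ref{lemma T in Cal for backward and forward} directly to the operator $T=F_\ka$, exactly as the preceding Theorem~\ref{thm characterization backward in Calpha} applies it to $B_\ka$. The whole content of the statement reduces to evaluating the quantities $\norm{F_\ka^n e_m}^2 / \norm{e_m}^2$ that appear in parts (i) and (ii) of that lemma, and recognizing them as the coefficients produced by the $\nabla$-operator.

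First I would recall from \eqref{eq defi B_la and F_la for e_n} that $F_\ka e_n = e_{n+1}$, so by iteration $F_\ka^n e_m = e_{m+n}$ for all $m,n\ge 0$. Combining this with the norm formula \eqref{eq norm en}, namely $\norm{e_j}^2 = \ka_j$, I obtain
\[
\frac{\norm{F_\ka^n e_m}^2}{\norm{e_m}^2} = \frac{\ka_{m+n}}{\ka_m}.
\]
This single identity is what drives both parts.

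For part (i), substituting this into the criterion \eqref{eq Adm alpha for basis} of Lemma~\ref{lemma T in Cal for backward and forward}(i), and using $\be_n=\abs{\al_n}$ together with the definition of $\be(\nabla)\ka_m$, I get
\[
\sum_{n=0}^{\infty} \abs{\al_n} \frac{\norm{F_\ka^n e_m}^2}{\norm{e_m}^2}
= \frac{1}{\ka_m}\sum_{n=0}^{\infty} \be_n \ka_{m+n}
= \frac{\be(\nabla)\ka_m}{\ka_m},
\]
so that the finiteness of the supremum over $m$ of the left-hand side is precisely the stated condition $\sup_{m\ge 0}\{\be(\nabla)\ka_m/\ka_m\}<\infty$. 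For part (ii), assuming $F_\ka\in\weak{\al}$, the same substitution in the criterion \eqref{eq C alpha for basis} of Lemma~\ref{lemma T in Cal for backward and forward}(ii) gives
\[
\sum_{n=0}^{\infty} \al_n \norm{F_\ka^n e_m}^2 = \sum_{n=0}^{\infty} \al_n \ka_{m+n} = \al(\nabla)\ka_m,
\]
so the positivity condition for all $m$ is exactly $\al(\nabla)\ka_m\ge 0$ for all $m$, giving $F_\ka\in\pweak{\al}$.

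There is essentially no obstacle here: the theorem is a transcription of Lemma~\ref{lemma T in Cal for backward and forward} through the explicit formula $F_\ka^n e_m = e_{m+n}$, just as Theorem~\ref{thm characterization backward in Calpha} was for $B_\ka$. The only point worth noting is convergence of the series $\be(\nabla)\ka_m$ and $\al(\nabla)\ka_m$, but under the hypothesis $F_\ka\in\weak{\al}$ (equivalently, once the supremum in (i) is finite) this is guaranteed, and the interchange of summations underlying Lemma~\ref{lemma T in Cal for backward and forward} has already been justified there via \eqref{eq Back Forw change summation}.
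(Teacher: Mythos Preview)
Your proposal is correct and follows exactly the approach indicated in the paper: the theorem is stated there as an immediate restatement of Lemma~\ref{lemma T in Cal for backward and forward} for $T=F_\ka$, and your computation of $\norm{F_\ka^n e_m}^2/\norm{e_m}^2=\ka_{m+n}/\ka_m$ together with the identification of the resulting sums as $\be(\nabla)\ka_m/\ka_m$ and $\al(\nabla)\ka_m$ is precisely what is meant.
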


\section{Explicit model and its uniqueness}
\label{section On the extensions}

In this section we prove
Theorems~\ref{thm-addendum-to-B-Cl-H-Mc} and~\ref{thm can choose VD}.
Let us start by proving that the operator $V_D$ is
a contraction in the Nevanlinna-Pick case. 
Notice that in the following theorem, we do not require that $\al$ belongs to $A_W$.

\begin{thm}\label{thm V_D is a contraction}
Let $\al(t) := \sum_{n\ge 0} \al_n t^n$, with
$\al_0 = 1$ and $\al_n \le 0$ for $n \ge 1$.
If $T \in L(H)$ satisfies $\al(T^*,T) \ge 0$,
then the operator $V_D$ is a contraction.
\end{thm}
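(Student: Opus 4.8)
The plan is to compute the $\CH_k\otimes\FD$-norm of $V_Dx$ directly from its Taylor coefficients and then control the resulting double series by a truncation argument that exploits the convolution identity $k\cdot\al=1$. As a power series in $z$, $V_Dx(z)=D(I_H-zT)^{-1}x=\sum_{n\ge0}z^nDT^nx$, so its $n$-th coefficient lies in $\FD=\ol{DH}$ and, using $D^2=\al(T^*,T)$,
\[
\norm{V_Dx}_{\CH_k\otimes\FD}^2=\sum_{n=0}^\infty k_n\norm{DT^nx}^2
=\sum_{n=0}^\infty k_n\ip{\al(T^*,T)T^nx}{T^nx}
=\sum_{n=0}^\infty k_n\sum_{m=0}^\infty\al_m\norm{T^{m+n}x}^2.
\]
A preliminary point to settle is absolute convergence of the inner series. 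Since $\al_0=1$ and $\al_m\le0$ for $m\ge1$,
\[
\sum_{m=1}^N\abs{\al_m}\norm{T^{m+n}x}^2=\norm{T^nx}^2-\sum_{m=0}^N\al_m\norm{T^{m+n}x}^2
\xrightarrow[N\to\infty]{}\norm{T^nx}^2-\ip{\al(T^*,T)T^nx}{T^nx}\le\norm{T^nx}^2,
\]
because $\al(T^*,T)\ge0$; hence $\sum_m\abs{\al_m}\norm{T^{m+n}x}^2<\infty$ for every $n$ (equivalently $T\in\weak{\al}$).

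The core step is a truncation. For $N\ge0$ set $P_N:=\sum_{n=0}^N k_nT^{*n}\al(T^*,T)T^n$, so that $\ip{P_Nx}{x}=\sum_{n=0}^N k_n\norm{DT^nx}^2$ is exactly the $N$-th partial sum above. Expanding $\al(T^*,T)=\sum_m\al_mT^{*m}T^m$ and using the absolute summability just established (finite over $n$, absolutely convergent over $m$), I would reindex by $p=m+n$:
\[
\ip{P_Nx}{x}=\sum_{p=0}^\infty\Bigl(\sum_{n=0}^{\min(N,p)}k_n\al_{p-n}\Bigr)\norm{T^px}^2.
\]
For $p\le N$ the coefficient is the full $p$-th Taylor coefficient of $k(t)\al(t)=1$, hence $\de_{p,0}$; for $p>N$ every index $p-n$ with $0\le n\le N$ satisfies $p-n\ge1$, so $\al_{p-n}\le0$ and, with $k_n\ge0$, the coefficient is $\le0$. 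Therefore
\[
\ip{P_Nx}{x}=\norm{x}^2+\sum_{p>N}\Bigl(\sum_{n=0}^N k_n\al_{p-n}\Bigr)\norm{T^px}^2\le\norm{x}^2.
\]
Since the terms $k_n\norm{DT^nx}^2$ are nonnegative, the partial sums $\ip{P_Nx}{x}$ increase to $\norm{V_Dx}^2$; letting $N\to\infty$ gives $\norm{V_Dx}^2\le\norm{x}^2$, so $V_D$ is a contraction.

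The main obstacle is the legitimacy of the reindexing $p=m+n$: the full double series $\sum_{n,m}k_n\al_m\norm{T^{m+n}x}^2$ has terms of mixed sign and need not be absolutely convergent in both indices, which is exactly why a naive rearrangement would (incorrectly) yield $\norm{V_Dx}^2=\norm{x}^2$. Truncating the $n$-sum at $N$ is what makes the rearrangement rigorous while simultaneously isolating a remainder $\sum_{p>N}(\cdots)\norm{T^px}^2$ whose sign is forced to be nonpositive. This is the one place where the Nevanlinna--Pick hypothesis $\al_m\le0$ $(m\ge1)$ is used in an essential way, and it is also what produces a genuine inequality rather than an isometry.
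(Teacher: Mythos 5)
Your proof is correct and takes essentially the same route as the paper's: both truncate the sum over $n$ at $N$, reindex by $p=n+m$ so that $k\al=1$ forces the coefficients $\sum_{n=0}^{\min(N,p)}k_n\al_{p-n}$ (the paper's $\tau_p$) to equal $\de_{p,0}$ for $p\le N$, and use $\al_m\le 0$ for $m\ge 1$ to conclude the tail coefficients are nonpositive, giving $\sum_{n=0}^{N}k_n\norm{DT^nx}^2\le\norm{x}^2$ uniformly in $N$. Your explicit check of the absolute convergence of the inner $m$-series, which legitimizes the reindexing, is a detail the paper leaves implicit, but the argument is identical in substance.
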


\begin{proof}
Recall that $D^2 = \al(T^*,T)$. Therefore
\[
\norm{Dx}^2 = \sum_{m=0}^{\infty} \al_m \norm{T^m x}^2
\]
for every $x \in H$. Hence
\[
\norm{DT^n x}^2 = \sum_{m=0}^{\infty} \al_m \norm{T^{m+n} x}^2
\]
for every $x \in H$ and every non-negative integer $n$. Fix a positive integer $N$. Then
\[
\begin{split}
\sum_{n=0}^{N} k_n \norm{DT^n x}^2
&=
\sum_{n=0}^{N} k_n \sum_{m=0}^{\infty} \al_m \norm{T^{m+n} x}^2 \\
& =
\sum_{j=0}^{\infty} \left( \sum_{n+m = j, \,  n \le N} k_n \al_m \right) 
\norm{T^j x}^2 =: \sum_{j=0}^{\infty} \tau_j \norm{T^j x}^2.
\end{split}
\]
Since $\al k = 1$ we get $\tau_0 = 1$ and $\tau_1 = \cdots = \tau_N = 0$. 
Moreover,
\[
\tau_{N+i} = k_0 \al_{N+i} + \cdots + k_N \al_i < 0
\]
for every $i \ge 1$, because all the $\al_j$'s above are negative or zero 
and the $k_j$'s are positive. 
Therefore
\[
\sum_{n=0}^{N} k_n \norm{DT^n x}^2 \le \norm{x}^2
\]
for every $N$ and hence the series $\sum k_n \norm{DT^n x}^2$ 
converges for every $x \in H$. This gives
\[
\norm{V_D x}^2 = \sum_{n=0}^{\infty} k_n \norm{DT^n x}^2 \le \norm{x}^2,
\]
as we wanted to prove.
\end{proof}

The following fact is simple and well-known .

\begin{prop}\label{prop-gen-form-intertwining-op}
Let $T\in L(H)$ with $\si(T) \ss \ol{\D}$, and let $\CE$ be a Hilbert space.
A bounded transform $V: H\to \CH_\ka\otimes \CE$ satisfies
\begin{equation}
\label{intertw}
VT= (B_\ka \otimes I_\CE) V
\end{equation}
if and only if there is a bounded linear operator
$C:H\to \CE$ such that $V=V_C$ (see \eqref{eq defi VC}).
\end{prop}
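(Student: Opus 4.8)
The plan is to prove both implications by passing to Taylor coefficients. Throughout, write any $g \in \CH_\ka \otimes \CE$ as a power series $g(z) = \sum_{n\ge 0} g_n z^n$ with coefficients $g_n \in \CE$, and recall that extracting the $n$-th coefficient is a bounded map $\CH_\ka\otimes\CE \to \CE$ of norm $\ka_n^{-1/2}$, since $\|g_n\|_\CE^2 \ka_n \le \|g\|^2$. Note also that the hypothesis $\si(T)\ss\ol\D$ gives $\rho(T)\le 1$, so by the spectral radius formula the power series $\sum_{n\ge 0} z^n T^n x$ converges in $H$ for every $z \in \D$ and every $x \in H$, and represents $(I_H - zT)^{-1}x$.

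For the ``if'' direction, suppose $V = V_C$ with $C: H\to\CE$ bounded. Expanding the resolvent, the $n$-th Taylor coefficient of $V_C x$ is $C T^n x$. Applying the description of $B_\ka$ in terms of coefficients --- namely that $B_\ka\otimes I_\CE$ sends $\sum_n g_n z^n$ to $\sum_n g_{n+1} z^n$, which follows from \eqref{def-B-k} --- I would compute that the $n$-th coefficient of $(B_\ka\otimes I_\CE)V_C x$ is $C T^{n+1} x$, which is exactly the $n$-th coefficient of $V_C(Tx)$. Since a vector of $\CH_\ka\otimes\CE$ is determined by its coefficients, the intertwining relation \eqref{intertw} holds.

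For the ``only if'' direction, assume $V$ is bounded and satisfies \eqref{intertw}. I would define $C: H\to\CE$ by $Cx := (Vx)_0$, the $0$-th Taylor coefficient of $Vx$; this is bounded by the observation above (it is the composition of $V$ with the coefficient functional of norm $\ka_0^{-1/2}$). The key point is to iterate \eqref{intertw}: since $B_\ka\otimes I_\CE$ lowers coefficient indices by one, applying \eqref{intertw} $n$ times gives $V(T^n x) = (B_\ka\otimes I_\CE)^n (Vx)$, whose $0$-th coefficient is $(Vx)_n$. On the other hand, by definition that $0$-th coefficient equals $C(T^n x) = C T^n x$. Therefore $(Vx)_n = C T^n x$ for every $n\ge 0$, so that $Vx(z) = \sum_{n\ge 0} (C T^n x) z^n = C(I_H - zT)^{-1}x = V_C x(z)$, where continuity of $C$ is used to pass it through the convergent series. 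Thus $V = V_C$.

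I do not expect a genuine obstacle, consistent with the claim that the fact is simple and well known; the only points requiring care are the boundedness of the coefficient functionals (which rests on $\ka_0>0$) and the convergence of the resolvent series (controlled by $\rho(T)\le 1$).
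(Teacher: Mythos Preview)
Your proof is correct and follows essentially the same approach as the paper's: both arguments define $C$ as the $0$-th Taylor coefficient map of $V$ and use the intertwining relation to identify $(Vx)_n$ with $CT^n x$. The paper phrases the induction as a one-step recursion $a_{n+1}(x)=a_n(Tx)$ rather than iterating \eqref{intertw} directly, and omits the details you supply about boundedness of coefficient functionals and convergence of the resolvent series, but the substance is the same.
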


\begin{proof}
It is well-known (and straightforward) that any bounded transform
$V_C$ satisfies~\eqref{intertw}.
Conversely, suppose that $VT = (B_\ka \otimes I_\CE) V$.
Define $a_n(x)$ by
\[
Vx(z) := \sum_{n=0}^{\infty}  a_n(x)z^n , \qquad x \in H.
\]
Then
\[
\sum_{n=0}^{\infty} a_n(Tx) z^n = VTx 
= (B_\ka \otimes I_\CE) Vx = \sum_{n=0}^{\infty} a_{n+1}(x) z^n.
\]
Therefore $a_{n+1}(x) = a_n(Tx)$. The statement follows, putting
$C:=a_0$, which has to be a bounded linear operator.
\end{proof}

\begin{prop}
\label{prop-charzn-intertwining}
Let $C:H\to \CE$ be a bounded operator
and let $T\in \CC_\al^w$.
Then there exists a bounded operator $W:H \to \mathcal{W}$
such that the operator $(V_C, W)$ is isometric and transforms
$T$ into a part of the operator
$(B_k\otimes I_\CE)\oplus S$, where $S \in L(\mathcal{W})$ is an isometry,
if and only if the following conditions hold.
\begin{itemize}
\item[\tn{(i)}]
$V_C:H\to \CH_k\otimes \CE$ is a contraction.

\item[\tn{(ii)}]
For every $x\in H$,
\[
\|x\|^2-\|V_C x\|^2=
\|Tx\|^2-\|V_C Tx\|^2.
\]
\end{itemize}
\end{prop}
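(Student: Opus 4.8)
The plan is to treat the two implications separately, using throughout that $V_C$ is bounded (being either a component of an isometry or, by~(i), a contraction), so that Proposition~\ref{prop-gen-form-intertwining-op} supplies the intertwining $(B_k\otimes I_\CE)V_C = V_C T$ for free. Thus the whole content of the statement concerns the ``defect'' part $W$, and the role of the two hypotheses is to encode exactly the isometry of $(V_C,W)$ and the existence of the isometric piece $S$.

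For necessity, suppose such $W$ and $S$ exist. Isometry of $(V_C,W)$ gives $\norm{x}^2 = \norm{V_C x}^2 + \norm{Wx}^2$ for all $x\in H$, whence $\norm{V_C x}\le\norm{x}$ and (i) holds. The relation $\big((B_k\otimes I_\CE)\oplus S\big)(V_C,W)=(V_C,W)T$ splits into its first component (which is automatic) and its second component $SWx = WTx$; since $S$ is an isometry this yields $\norm{WTx}=\norm{Wx}$. Writing the isometry identity at $x$ and at $Tx$, substituting $\norm{WTx}^2=\norm{Wx}^2$, and subtracting, the $W$-terms cancel and one is left with $\norm{x}^2-\norm{V_C x}^2 = \norm{Tx}^2-\norm{V_C Tx}^2$, which is precisely~(ii).

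For sufficiency, assuming~(i) I would set $W_0 := (I_H - V_C^*V_C)^{1/2}$ and $\CW_0 := \ol{W_0 H}$, so that by construction $\norm{W_0 x}^2 = \norm{x}^2 - \norm{V_C x}^2$ and $(V_C,W_0)$ is automatically isometric. The crux is then to build the isometry $S$. I claim that $S_0 : W_0 x \mapsto W_0 Tx$ is a well-defined isometry on the subspace $W_0 H$. Indeed, hypothesis~(ii) reads exactly $\norm{W_0 x}^2 = \norm{W_0 Tx}^2$; hence $W_0 x = W_0 y$ forces $\norm{W_0 T(x-y)} = \norm{W_0(x-y)} = 0$, giving well-definedness, and the same identity gives the isometric property. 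Extending $S_0$ by continuity produces an isometry $S$ on $\CW_0$ satisfying $SW_0 = W_0 T$ and $S(\CW_0)\subseteq\CW_0$.

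Finally I would verify componentwise that $\big((B_k\otimes I_\CE)\oplus S\big)(V_C,W_0)x = (V_C Tx,\, W_0 Tx) = (V_C,W_0)Tx$, so that $\Ran(V_C,W_0)$ is invariant under $(B_k\otimes I_\CE)\oplus S$ and the restriction to it is unitarily equivalent to $T$ via the isometry $(V_C,W_0)$; this is exactly the assertion that $(V_C,W)$ (with $W=W_0$, $\CW=\CW_0$) transforms $T$ into a part of $(B_k\otimes I_\CE)\oplus S$. The only genuinely delicate step is the construction of $S$, namely the well-definedness and isometry of $W_0 x \mapsto W_0 Tx$; everything rests on recognizing that condition~(ii) is nothing but the equality $\norm{W_0 x} = \norm{W_0 Tx}$, which is precisely what makes this map an isometry.
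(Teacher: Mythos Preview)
Your proof is correct and follows essentially the same route as the paper's: for necessity you read off (i) from the isometry of $(V_C,W)$ and derive (ii) from $SW=WT$ with $S$ isometric; for sufficiency you set $W=(I-V_C^*V_C)^{1/2}$, use (ii) to see that $W x\mapsto WTx$ is a well-defined isometry on $WH$, extend by density, and conclude. The paper does exactly this, with the intertwining $(B_k\otimes I_\CE)V_C=V_CT$ likewise taken as automatic from the definition of $V_C$.
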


\begin{proof}
Let us suppose first the existence of such operator $W$.
Since $(V_C,W)$ is an isometry, (i) holds.
Notice that (ii) is equivalent to proving that 
$\norm{Wx}^2 = \norm{WTx}^2$ for every $x \in H$. 
But this is also immediate since $SWx = WTx$ and $S$ is an isometry.

Conversely, suppose now that (i) and (ii) are true.
By (i), we can put 
$W := (I - V_C^{*} V_C)^{1/2}$ and $\mathcal{W} := \ol{\tn{Ran}} \,  W$. 
Using (ii) we have
\begin{equation}\label{eq norm Wx}
\norm{Wx}^2 = \norm{x}^2 - \norm{V_Cx}^2 
= \norm{Tx}^2 - \norm{V_CTx}^2 = \norm{WTx}^2.
\end{equation}
We define
\[
S(Wx) := WTx,
\]
for every $x \in H$. 
Note that $S$ is well defined, since $\norm{SWx} = \norm{Wx}$ by \eqref{eq norm Wx}.  
Since $WH$ is dense in $\mathcal{W}$, $S$ can be
extended to an isometry on $\mathcal{W}$. By the definition of $W$,
 we know that $(V_C,W)$ is an isometry and it is immediate that
\[
(B_k \otimes I_\mathcal{D}) V_C = V_C T \quad \tn{ and } \quad SW = WT.
\]
This completes the converse implication.
\end{proof}

\begin{prop}
\label{prop-rel-D-C-W}
Let $T\in \CC_\al^w$.
Assume that $C:H\to \CE$ and
$W:H\to \CW$ are any bounded operators such that
$(V_C, W)$ is isometric on $(\CH_k \otimes \CE) \oplus \CW$
and transforms
$T$ into a part of $(B_k\otimes I_\CE)\oplus S$, where $S\in B(\CW)$ is an isometry.
Then $C$ and $D$ are related by
\begin{equation}\label{eq rel-D-C-W}
\|Dx\|^2=\|Cx\|^2+\al(1) \|Wx\|^2, \quad \forall x\in H.
\end{equation}
\end{prop}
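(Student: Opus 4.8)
The plan is to expand both sides into scalar series in the quantities $\norm{T^nx}$ and $\norm{CT^nx}$, and then collapse the resulting double sum using the defining relation $\al k = 1$. First I would record the coefficientwise form of $V_C$. Since $V_Cx(z) = C(I_H - zT)^{-1}x = \sum_{n\ge0}(CT^nx)z^n$, the norm in $\CH_k\otimes\CE$ reads
\[
\norm{V_Cx}^2 = \sum_{n=0}^{\infty} k_n\norm{CT^nx}^2.
\]
Because $(V_C,W)$ is an isometry, this gives the basic identity $\norm{x}^2 = \sum_{n\ge0} k_n\norm{CT^nx}^2 + \norm{Wx}^2$.

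Next I would use the two intertwining relations that express that $(V_C,W)$ transforms $T$ into a part of $(B_k\otimes I_\CE)\oplus S$, namely $(B_k\otimes I_\CE)V_C = V_CT$ and $SW = WT$. The second yields $WT^jx = S^jWx$, so $\norm{WT^jx} = \norm{Wx}$ for every $j\ge0$, as $S$ is an isometry. Substituting $T^jx$ for $x$ in the basic identity then produces, for each $j\ge0$,
\[
\norm{T^jx}^2 = \sum_{n=0}^{\infty} k_n\norm{CT^{n+j}x}^2 + \norm{Wx}^2.
\]

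Finally I would form $\norm{Dx}^2 = \sum_{m\ge0}\al_m\norm{T^mx}^2$ (valid since $D^2 = \al(T^*,T)$), insert the last display with $j=m$, and interchange the two summations. Grouping the double sum by the total index $i=m+n$ turns its inner coefficient into $\sum_{m+n=i}\al_m k_n$, which is the $i$-th Taylor coefficient of $\al k = 1$, hence $\delta_{i0}$; only the $i=0$ term survives and contributes $\norm{Cx}^2$. The leftover constant terms sum to $\bigl(\sum_{m\ge0}\al_m\bigr)\norm{Wx}^2 = \al(1)\norm{Wx}^2$, which gives the claimed identity $\norm{Dx}^2 = \norm{Cx}^2 + \al(1)\norm{Wx}^2$.

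The one step requiring care, and the only real obstacle, is the legitimacy of the interchange of summations; this is precisely where the assumption $T\in\CC_\al^w$ is used. Absolute convergence of the double series follows from the estimate $\sum_n k_n\norm{CT^{n+m}x}^2 = \norm{T^mx}^2 - \norm{Wx}^2 \le \norm{T^mx}^2$ (read off from the second display), whence
\[
\sum_{m=0}^{\infty}\abs{\al_m}\sum_{n=0}^{\infty} k_n\norm{CT^{n+m}x}^2 \le \sum_{m=0}^{\infty}\abs{\al_m}\norm{T^mx}^2 < \infty,
\]
the finiteness being exactly the membership $T\in\weak{\al}$ granted by Proposition~\ref{prop equivalences for Adm alpha weak}. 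Once absolute convergence is secured, the rearrangement and the Cauchy-product computation with $\al k = 1$ are routine.
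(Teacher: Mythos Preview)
Your proof is correct and follows essentially the same approach as the paper: expand $\norm{x}^2$ via the isometry of $(V_C,W)$, substitute $T^jx$, use $\norm{WT^jx}=\norm{Wx}$ (from $SW=WT$ with $S$ isometric), sum against $\al_j$, and collapse the resulting double series via $\al k=1$, with the interchange justified by $T\in\weak{\al}$ exactly as you do. The only cosmetic difference is that the paper cites $\norm{WTx}=\norm{Wx}$ from the preceding proposition rather than rederiving it, but the argument is otherwise identical.
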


\begin{proof}
Since $(V_C, W)$ is isometric, we have
\begin{equation}\label{eq VC W isometry}
\norm{x}^2 = \norm{V_C x}^2 + \norm{Wx}^2 
= \sum_{n=0}^{\infty} k_n \norm{CT^n x}^2 +  \norm{Wx}^2,
\end{equation}
for every $x \in H$.
Substituting $x$ by $T^j x$ above and multiplying by $\al_j$, we obtain that
\[
\begin{split}
\al_j \norm{T^j x}^2
&=
\sum_{n=0}^{\infty} \al_j k_n \norm{CT^{n+j} x}^2 +  \al_j \norm{WT^j x}^2 \\
&=
\sum_{n=0}^{\infty} \al_j k_n \norm{CT^{n+j} x}^2 +  \al_j \norm{Wx}^2,
\end{split}
\]
where we have used that $\norm{Wx}^2 = \norm{WTx}^2$. Therefore
\[
\begin{split}
\norm{Dx}^2
&=
\sum_{j=0}^{\infty} \al_j \norm{T^j x}^2
=
\sum_{j=0}^{\infty} \sum_{n=0}^{\infty} \al_j k_n \norm{CT^{j+n}x}^2 
+ \left( \sum_{j=0}^{\infty} \al_j \right) \norm{Wx}^2 \\
&\stackrel{(\star)}{=}
\sum_{m=0}^{\infty} \left(\sum_{j+n = m}  \al_j k_n \right) \norm{CT^m x}^2 + \al(1) \norm{Wx}^2.
\end{split}
\]
Since $\al k = 1$, the only non-vanishing summand in the last series above is 
for $m=0$ and we obtain \eqref{eq rel-D-C-W}.
Finally, note that the rearrangement in ($\star$) is correct as
\[
\sum_{j=0}^{\infty} \sum_{n=0}^{\infty} \abs{\al_j} k_n \norm{CT^{n+j} x}^2 
\le \sum_{j=0}^{\infty} \abs{\al_j} \norm{T^j x}^2 < \infty,
\]
where we have used \eqref{eq VC W isometry} and that $T\in \CC_\al^w$.
\end{proof}

Recall the definition of the \emph{minimal model}
(Definition~\ref{defi minimal model}).

\begin{rem}\label{rem to minimal model}
Suppose that $T$ is $\al$-modelable.
Then $T$ is unitarily equivalent to $\big((B_k\otimes I_\CE)\oplus S\big) | \mathcal{L}$,
where $\mathcal{L} = \ol{\Ran} \, (V_C,W)$. This model is minimal if and only if
\begin{itemize}
\item[(a)] 
$\ol{\Ran} \, C = \CE$; and
\item[(b)] 
$\ol{\Ran} \, W = \mathcal{W}$.
\end{itemize}
Indeed, in this case, it is easy to see that (a) is equivalent to (i), 
and (b) is equivalent to (ii) in Definition~\ref{defi minimal model}.
\end{rem}

\begin{proof}[Proof of Theorem~\ref{thm can choose VD}]
Suppose that the hypotheses are satisfied. 	
First we notice that $\al(T^*,T)\ge 0$, as it follows from 
Corollary~\ref{cor-Bk-in-pweak} and Proposition~\ref{prop_immediate_properties_Calpha}. 
Therefore $D$ is well-defined. 	

(i) In the critical case (i.e., $\al(1) = 0$), \eqref{eq rel-D-C-W} gives
\[
\norm{Dx} = \norm{Cx} \qquad \forall x \in H,
\]
so there exists a unitary operator $v$ such that $C=vD$. This implies the statement.

(ii) Suppose we are in the subcritical case (i.e., $\al(1) > 0$).
First, we
remark that the model is not unique in general. For instance, take $T=U$ any unitary operator.
Using Proposition~\ref{prop equivalences for Adm alpha weak} and that $\al \in A_W$, 
we obtain that $T \in \weak{\al}$. 
Since
\[
\sum_{n=0}^{\infty} \al_n \norm{T^n x}^2 
= \left( \sum_{n=0}^{\infty} \al_n \right) \norm{x}^2  
\qquad \forall x \in H,
\]
we get that $\al(T^*,T)=\al(1)I \ge 0$.  Obviously, $T = U$ is a minimal model for $T$
(where $\CE = 0$, and $\CW = H$).
Moreover, if $k = 1/ \al$ fits \eqref{cond-Nik-Embry-fuerte},
then Theorem~\ref{Muller-type-thm}
(which is proved in the next section, but its proof is completely independent)
gives another model for $T$.
(See Example~\ref{exa alpha positive coeff} and
Remark~\ref{exa alpha positive coeff 2}.)

Now suppose that
$T$ is any $\al$-modelable operator and
$(V_C,W)$ provides its model.
Let us see that there exists a minimal model of $T$ with $V = V_D$ and $W$ absent.
Changing $x$ by $T^n x$ in \eqref{eq rel-D-C-W} we obtain
\[
\norm{DT^n x}^2 = \norm{C T^n x}^2 + \al(1) \norm{Wx}^2,
\]
where we have used that $\norm{WTx} = \norm{Wx}$. Therefore
\[
\begin{split}
\norm{V_D x}^2
&=
\sum_{n=0}^{\infty} k_n \norm{DT^n x}^2
=
\sum_{n=0}^{\infty} k_n \norm{CT^nx}^2 + k(1)\al(1) \norm{Wx}^2 \\
&=
\norm{V_C x}^2 + \norm{Wx}^2 = \norm{x}^2,
\end{split}
\]
so $V_D : H \to \CH_k \otimes \CE$ is an isometry and
therefore provides a model of $T$.
The space $\mathcal{L}$ is just 
$\Ran V_D$ in $\CH_k \otimes \mathfrak{D}$ (which is closed).
This model is minimal, because $\Ran D$ is dense in $\mathfrak{D}$.
(See Remark \ref{rem to minimal model}.)
This gives all statements of (ii).
\end{proof}

\begin{proof}[Proof of Theorem \ref{thm-addendum-to-B-Cl-H-Mc}]
It is an immediate consequence of Theorem \ref{thm can choose VD} 
and Proposition \ref{prop-charzn-intertwining} (i) that $V_D$ is a contraction. 
Finally, for proving that $(V_D,W)$ gives a model, 
we just need to use the same argument employed in the reciprocal implication of 
Proposition~\ref{prop-charzn-intertwining}.
\end{proof}

Results close to Theorems~\ref{thm-addendum-to-B-Cl-H-Mc} and 
\ref{thm can choose VD} appear in Schillo's PhD thesis \cite{Schillo-tesis}. 
He deals with the generality of tuples of commuting operators, 
but for the case of one operator, 
the hypotheses needed there are more restrictive than ours. 

For example, in \cite[Theorem~5.16]{Schillo-tesis}, 
the uniqueness of the coextension is proved 
when $T$ is what he calls a \emph{strong $k$-contraction}.  
For one single operator $T$ and using our notations, these are operators such that 
$\al(T^*,T) \ge 0$, the limit
\[
\Sigma(T) := I_H - \lim_{N\to\infty} \,\sum_{n=0}^{N} k_n T^{*n} \al(T^*,T) T^n
\]
exists (in SOT), 
$\Sigma(T) \ge 0$, and $\Sigma(T) = T^* \Sigma(T) T$. 
In \cite[Corollary~5.17]{Schillo-tesis}, he gives an explicit model involving the 
defect space $\FD_T$. His assumptions are somewhat technical 
(see \cite[Assumption~5.8]{Schillo-tesis}). 
He also assumes the existence of $\al(B_k^*,B_k)$, for which 
\cite[Proposition~2.10]{Schillo-tesis} says that a sufficient condition
is that the coefficients $\{ \al_n \}$ of the function $\al$ have eventually the same sign. 

Recall that our Theorem~\ref{thm can choose VD}~(ii) says that for the subcritical case 
the model is not unique in general. Therefore, since Schillo obtains uniqueness 
of the coextension, it seems that his assumptions exclude the subcritical case.

Schillo's thesis also contains a result on the description of invariant subspaces of 
a backward shift, 
analogous to $B_k\otimes I_\CE$, in his setting of operator tuples. 

Notice that in Theorems~\ref{thm-addendum-to-B-Cl-H-Mc} and 
\ref{thm can choose VD} we are only assuming that $T$ is $\al$-modelable. 
In particular, we do not impose any restriction about the signs of the Taylor coefficients 
of the function $\al$. 

\section{Proof of Theorem \ref{Muller-type-thm}}
\label{section Proof of Theorem Muller-type-thm}

In this section we prove Theorem~\ref{Muller-type-thm}. 
For that, we need to cite some results concerning Banach algebras.

For any sequence $\om = \{ \om_n \}_{n=0}^{\infty}$ of positive weights,
define the weighted space
\[
\ell^{\infty}(\om) := \left\{ f(t) =
\sum_{n=0}^{\infty} f_n t^n \, : \, \sup_{n \ge 0} \abs{f_n} \om_n < \infty \right\}.
\]
In general, its elements are formal power series.
We will also use the separable version of this space:
\[
\ell^{\infty}_0(\om) := \left\{ f(t) =
\sum_{n=0}^{\infty} f_n t^n \, : \, \lim_{n \to\infty} \abs{f_n} \om_n =0 \right\}.
\]

\begin{prop}[see \cite{Nik1970}]\label{prop Aom Banach alg iff condition series}
$\ell^{\infty}(\om)$ is a Banach algebra
(with respect to the formal multiplication of power series)
if and only if
\begin{equation}\label{eq charact Banach algebra omega_n}
\sup_{n \ge 0} \,  \sum_{j=0}^{n} \dfrac{\om_n}{\om_j \om_{n-j}} < \infty.
\end{equation}
\end{prop}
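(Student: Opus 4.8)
The plan is to derive both implications directly from the pointwise submultiplicativity estimate for the convolution product, using that $\ell^{\infty}(\om)$ is already complete as a weighted sup-normed sequence space. Throughout I write $\|f\| = \sup_{n\ge 0}\abs{f_n}\om_n$ for $f(t)=\sum_n f_n t^n$, and I recall that the coefficients of the formal product are $(fg)_n = \sum_{j=0}^n f_j g_{n-j}$, a finite sum for each fixed $n$, so that $(fg)_n$ is always well defined; the only issue is whether $fg$ again lies in $\ell^{\infty}(\om)$.

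For the ``if'' direction I would estimate the product directly. For $f,g\in\ell^{\infty}(\om)$ and any $n$, using $\abs{f_j}\le \|f\|/\om_j$ and $\abs{g_{n-j}}\le \|g\|/\om_{n-j}$ gives
\[
\abs{(fg)_n}\om_n \le \om_n\sum_{j=0}^n \abs{f_j}\,\abs{g_{n-j}} \le \|f\|\,\|g\|\sum_{j=0}^n \frac{\om_n}{\om_j\om_{n-j}}.
\]
Taking the supremum over $n$ and setting $M := \sup_{n\ge 0}\sum_{j=0}^n \om_n/(\om_j\om_{n-j})$, which is finite by hypothesis, yields $\|fg\|\le M\|f\|\,\|g\|$. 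In particular $fg\in\ell^{\infty}(\om)$, so the space is closed under the product and multiplication is bounded; passing to the equivalent norm $M\|\cdot\|$ makes it submultiplicative, so $\ell^{\infty}(\om)$ is a Banach algebra.

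For the ``only if'' direction I would test submultiplicativity against an explicit family of polynomials chosen so that the convolution exhibits no cancellation. Fix $N$ and set $f = \sum_{j=0}^N \om_j^{-1}t^j$. Then $\|f\| = \sup_{0\le j\le N}\om_j^{-1}\om_j = 1$, and since all coefficients of $f$ are nonnegative, for every $n\le N$ one has $(f^2)_n = \sum_{j=0}^n \om_j^{-1}\om_{n-j}^{-1}$, with no terms truncated because $j,\,n-j\le n\le N$. Evaluating at $n=N$,
\[
\|f^2\|\ge \abs{(f^2)_N}\,\om_N = \sum_{j=0}^N \frac{\om_N}{\om_j\om_{N-j}}.
\]
If $\ell^{\infty}(\om)$ is a Banach algebra, then $\|f^2\|\le C\|f\|^2 = C$ for the (sub)multiplicativity constant $C$, so the displayed sum is bounded by $C$ uniformly in $N$, which is exactly \eqref{eq charact Banach algebra omega_n}.

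The routine points are the completeness of $\ell^{\infty}(\om)$ (standard for a weighted sup-norm) and the fact that boundedness of a bilinear product on a Banach space is equivalent, up to an equivalent norm, to submultiplicativity. The only genuinely delicate step is the reverse direction: its success hinges on choosing \emph{nonnegative} test coefficients $f_j=\om_j^{-1}$ so that the convolution sum reproduces $\sum_j \om_n/(\om_j\om_{n-j})$ with no cancellation, and on the observation that at the top index $n=N$ the truncation $j,\,n-j\le N$ is harmless, so that the full sum $\sum_{j=0}^N \om_N/(\om_j\om_{N-j})$ is recovered.
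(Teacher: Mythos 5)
Your proof is correct, and it is worth noting that the paper itself gives no proof of this proposition at all --- it is quoted from Nikolski's work \cite{Nik1970} --- so your argument supplies the standard proof rather than paralleling one in the text. The forward direction (the convolution estimate $\abs{(fg)_n}\om_n \le M\norm{f}\norm{g}$ followed by renorming) is exactly right. In the converse, however, you invoke a multiplicativity constant $C$, which presupposes that multiplication is a \emph{bounded} bilinear map; if ``Banach algebra with respect to the formal multiplication'' is read minimally as ``the formal product of any two elements again lies in $\ell^{\infty}(\om)$,'' boundedness is not part of the hypothesis. It does follow automatically --- for fixed $g$ the map $f\mapsto fg$ has closed graph, since norm convergence in $\ell^{\infty}(\om)$ implies coefficientwise convergence (each $\om_n>0$) and each product coefficient is a finite sum, so the closed graph theorem plus uniform boundedness yields joint continuity --- but your appeal to ``routine points'' covers only the renorming step, not this one, so you should say it. Alternatively, you can bypass the constant $C$ and the truncation bookkeeping entirely: take the single untruncated element $f(t)=\sum_{j\ge 0}\om_j^{-1}t^j$, which lies in $\ell^{\infty}(\om)$ with $\norm{f}=1$; since all its coefficients are positive, $(f^2)_n=\sum_{j=0}^{n}\om_j^{-1}\om_{n-j}^{-1}$, and the mere membership $f^2\in\ell^{\infty}(\om)$ is verbatim the condition \eqref{eq charact Banach algebra omega_n}, with no appeal to continuity of multiplication at all. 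This sharpened version proves the equivalence under the weakest reading of the hypothesis; your version, with the closed-graph remark added, is equally valid under the standard reading.
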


\begin{thm}\label{thm omega bdd by summable implies algebra}
Let $\om_n > 0$ and $\om_n^{1/n} \to 1$. If $\sup_n \om_{n+1}/\om_{n}<\infty$ and
\begin{equation}\label{cond-Nik-ell-1}
\lim_{m\to\infty}\, \sup_{n\ge 2m} \sum_{m\le  j\le n/2}
\dfrac{\om_n}{\om_j \om_{n-j}}  =0,
\end{equation}
then the following is true.
\begin{itemize}
\item[\tn{(i)}] 
$\ell^\infty(\om)$ is a Banach algebra.
\item[\tn{(ii)}] 
If $f \in \ell^\infty(\om)$ does not vanish on $\ol{\D}$, then $1/f \in \ell^\infty(\om)$.
\end{itemize}
\end{thm}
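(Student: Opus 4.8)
The plan for (i) is to verify the Nikolskii criterion \eqref{eq charact Banach algebra omega_n} from Proposition~\ref{prop Aom Banach alg iff condition series}, i.e.\ to bound $S_n := \sum_{j=0}^n \frac{\om_n}{\om_j \om_{n-j}}$ uniformly in $n$. Set $M := \sup_n \om_{n+1}/\om_n < \infty$; iterating the ratio bound gives $\om_n \le M^j \om_{n-j}$, hence $\frac{\om_n}{\om_j\om_{n-j}} \le M^j/\om_j$ for $0 \le j \le n$. Using \eqref{cond-Nik-ell-1}, I would fix $m$ so that $\sup_{n \ge 2m} \sum_{m \le j \le n/2} \frac{\om_n}{\om_j\om_{n-j}} \le 1$. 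For $n \ge 2m$ I split the index set $\{0,\dots,n\}$ into the two edges $0 \le j < m$ and $n-m < j \le n$, and the middle $m \le j \le n-m$. The two edges are equal by the symmetry $j \leftrightarrow n-j$, and each is at most $\sum_{j=0}^{m-1} M^j/\om_j =: C_m < \infty$; the middle is at most $2\sum_{m \le j \le n/2} \frac{\om_n}{\om_j\om_{n-j}} \le 2$, again by symmetry. Thus $S_n \le 2C_m + 2$ for $n \ge 2m$, while $\sup_{0\le n < 2m} S_n < \infty$ trivially, so $\sup_n S_n < \infty$ and (i) follows. Note that $\om_n^{1/n}\to 1$ is not used here.

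For (ii), write $A = \ell^\infty(\om)$, which is a commutative unital Banach algebra by (i). The plan is to run Gelfand theory. The generator $t = e_1$ satisfies $\|t^n\|_A = \om_n$, so its spectral radius is $\lim \om_n^{1/n} = 1$ and $\sigma_A(t) \subseteq \ol{\D}$. For $\la \in \D$ the evaluation $g \mapsto g(\la) = \sum g_n \la^n$ is absolutely convergent (since $|g_n| \le \|g\|_A/\om_n$ and $\om_n^{1/n}\to 1$) and is a character sending $t$ to $\la$, whence $\sigma_A(t) = \ol{\D}$. For $g = \sum g_n t^n \in A$, the Gelfand transform restricted to these interior characters is $\hat g(\la) = g(\la)$, the analytic function $g$ on $\D$.

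It then remains to identify the full maximal ideal space $\mathcal{M}(A)$ with $\ol{\D}$ via $\chi \mapsto \chi(t)$: surjectivity onto $\ol{\D}$ is the spectral computation above, and one must establish injectivity, i.e.\ that every character is point evaluation at some $\la \in \ol{\D}$ and that no extra maximal ideals sit over the circle $\T$. Granting this, the Gelfand transform of $f$ is the continuous extension of $f|_\D$ to $\ol{\D}$, so $\sigma_A(f) = \hat f(\mathcal{M}(A)) = f(\ol{\D})$; if $f$ is zero-free on $\ol{\D}$ then $0 \notin \sigma_A(f)$, hence $f$ is invertible in $A$, and the Taylor coefficients of $f^{-1}$ form an element of $\ell^\infty(\om)$, that is, $1/f \in \ell^\infty(\om)$.

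The hard part will be precisely this identification of $\mathcal{M}(A)$ with $\ol{\D}$, i.e.\ controlling the characters lying over the boundary; equivalently, one must show that $\sigma_A(f)$ does not exceed $f(\ol{\D})$. In the interior I expect the clean device to be the division element $\frac{f(z)-f(\la)}{z-\la}$: if it lies in $A$, then applying any character with $\chi(t)=\la$ forces $\chi(f)=f(\la)$. The delicate point is that, since we only have the one-sided ratio bound $\sup_n \om_{n+1}/\om_n < \infty$ and not its lower counterpart, membership of such quotients in $A$ and the absence of singular boundary characters are not automatic; this is exactly where the subexponential growth $\om_n^{1/n}\to 1$ together with the summability condition \eqref{cond-Nik-ell-1} must be brought to bear, and I would treat the boundary characters by a separate limiting/regularity argument (for instance by approximating $f$ through dilations $f(rz)$, for which the holomorphic functional calculus applies on a neighborhood of $\sigma_A(t)$, and passing to the limit $r \to 1^-$ using \eqref{cond-Nik-ell-1} to keep the inverses bounded in $A$).
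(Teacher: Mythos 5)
Your part (i) is correct, and it is exactly the paper's route: the paper merely asserts that the hypotheses imply the Nikolskii criterion \eqref{eq charact Banach algebra omega_n} and invokes Proposition~\ref{prop Aom Banach alg iff condition series}, while your splitting --- edges $0\le j<m$ and $n-m<j\le n$ bounded by $\sum_{j<m}M^j/\om_j$ via $\om_n\le M^j\om_{n-j}$, middle $m\le j\le n-m$ bounded by $2$ via the symmetry $j\leftrightarrow n-j$ and \eqref{cond-Nik-ell-1} --- correctly fills in that assertion; you are also right that $\om_n^{1/n}\to 1$ is not used in (i).

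For (ii) there is a genuine gap, and it sits exactly where you flagged it. Your plan asks for $\mathfrak{M}(A)\cong\ol{\D}$ with $A=\ell^\infty(\om)$, but $A$ is non-separable and is \emph{not} generated by $t$: the closed subalgebra generated by $t$ is only $A_0=\ell^\infty_0(\om)$ (the closure of the polynomials), so a character of $A$ is not determined by $\chi(t)$, and nothing in your sketch excludes exotic characters of $A$ extending a boundary evaluation on $A_0$. Your division device $(f(z)-f(\la))/(z-\la)$ also fails here: bounding its $n$-th coefficient requires control of $\om_n/\om_k$ for $k>n$, i.e.\ a lower ratio bound or monotonicity, which the hypotheses do not provide. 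Your fallback via dilations is the right move --- indeed $f(rt)\in A_0$, $\norm{f(rt)}_A\le\norm{f}_A$, and $1/f(rt)\in A$ for each $r<1$ by the holomorphic functional calculus, since $1/f(rz)$ is holomorphic on the disc of radius $1/r$ --- but the coefficientwise passage to the limit $r\to 1^-$ needs $\sup_{r<1}\norm{1/f(rt)}_A<\infty$, and you give no mechanism for this: invertibility of each $f(rt)$ by itself allows the inverses to blow up, and \eqref{cond-Nik-ell-1} does not ``keep the inverses bounded'' by any soft argument. This uniform bound is the entire content of the step, and it is precisely what the paper imports from the norm-controlled inversion theory of El-Fallah, Nikolski and Zarrabi \cite{FNikZarr98}: the hypotheses yield a compact embedding of $A$ into the multiplier algebra $\operatorname{mult}(\ell^\infty(\om'))$ with $\om'(n)=\om(n)/(n+1)$ (Lemma~3.6.3 there), whence $\de_1(A_0,\mathfrak{M}(A_0))=0$ (Theorem~3.4.1 there); concretely, for every $\de>0$ there is $c_1(\de)<\infty$, \emph{independent of $f$}, such that $f\in A_0$, $\norm{f}_A=1$, $\abs{f}>\de$ on $\ol{\D}$ imply $1/f\in A_0$ with $\norm{1/f}_A\le c_1(\de)$. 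Only with this quantitative inversion does the dilation argument close (note also that guaranteeing $\abs{f(rt)}>\de$ on $\ol{\D}$ for $r$ near $1$ uses continuity of $f$ on $\ol{\D}$, which rests on $\sum 1/\om_n<\infty$, cf.\ Remark~\ref{rem4_4}).

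It is worth adding that your Gelfand strategy is salvageable in a separable sub-setting, and the paper does exactly that --- but only under a stronger hypothesis: in Remark~\ref{rem4_3}, assuming the summable domination \eqref{eq condition on omega_n summable}, one shrinks the weight to $\wt\om_n=\rho_n\om_n$ so that $\ell^\infty(\wt\om)$ is still a Banach algebra, $f$ lands in the separable algebra $\ell^\infty_0(\wt\om)$ where the maximal ideal space \emph{is} $\{\chi_\la:\la\in\ol{\D}\}$ (polynomials are dense there), and Gelfand theory gives the contradiction. Under \eqref{cond-Nik-ell-1} alone, no such elementary substitute is known to the paper, and the citation of \cite{FNikZarr98} is the actual engine of the proof; your proposal names the obstruction correctly but does not overcome it.
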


\begin{proof}
The hypotheses imply~\eqref{eq charact Banach algebra omega_n}, so that (i) follows
from Proposition~\ref{prop Aom Banach alg iff condition series}.
To get (ii), we apply the results of the paper~\cite{FNikZarr98} 
by El-Fallah, Nikolski and Zarrabi.
We use the notation of this paper.
Put $\om'(n)=\om(n)/(n+1)$, $A=\ell^\infty(\om)$ and $A_0=\ell^\infty_0(\om)$.
The hypotheses imply that $A$ (and hence $A_0$) is compactly
embedded into the multiplier convolution algebra $\operatorname{mult}(\ell^\infty(\om'))$,
see~\cite[Lemma 3.6.3]{FNikZarr98}.
Hence, by~\cite[Theorem 3.4.1]{FNikZarr98},
for any $f \in A_0$, $\de_1(A_0, \frak{M}(A_0))=0$,
see \cite[Subsection 0.2.3]{FNikZarr98} for the definition of this quantity.
This means that for any $\de>0$ there is a constant
$c_1(\de)<\infty$ such that the conditions
$f \in A_0$, $\|f\|_A=1$ and $|f|>\de$ on $\ol{\D}$
imply that $1/f \in A_0$ and $\|1/f\|_{A}\le c_1(\de)$.
In particular, (ii) holds for $f$ in $A_0$. To get (ii) in the general case,
suppose that $f \in A$ and $|f|>\de>0$ on $\ol{\D}$.
Since $f(rt)\in A_0$ for all $r<1$, we get that the norms of
the functions $1/f(rt)$ in $A$ are uniformly bounded by $c_1(\de)$ for
all $r<1$. When $r\to 1^-$,
each Taylor coefficient of $1/f(rt)$ tends to the corresponding Taylor coefficient of
$1/f(t)$. It follows that $1/f$ is in $A$ (and $\|1/f\|_A\le c_1(\de)$).
\end{proof}

\begin{proof}[Proof of Theorem \ref{Muller-type-thm}]
Put
\[
\om_n := 1/k_n.
\]
The first part of Theorem~\ref{Muller-type-thm}
(that $B_k$ is bounded) is straightforward.
Also, by Theorem~\ref{thm omega bdd by summable implies algebra}~(i),
$\ell^\infty(\om)$ is an algebra.

First suppose that $T$ is a part of $B_k \otimes I_\CE$, and let us prove
that $B_k\in \pweak{\al} \cap \weak{k}$.

By Theorem \ref{thm characterization backward in Calpha} (i),
we know that $B_k \in \weak{k}$ if and only if
\[
\sum_{j=0}^{m} k_j k_{m-j} \,  \lesssim \,  k_m,
\]
which follows from Theorem \ref{thm omega bdd by summable implies algebra} (i) 
and Proposition \ref{prop Aom Banach alg iff condition series}.

Now let us see that $B_k \in \pweak{\al}$.
By Theorem \ref{thm omega bdd by summable implies algebra} (ii), $\al = 1/k$ belongs
to $\ell^\infty(\om)$, and therefore $\abs{\al_n} \lesssim k_n$. 
Then, since $B_k \in \weak{k}$, we obtain
that $B_k \in \weak{\al}$. 
Finally, Theorem \ref{thm characterization backward in Calpha} (ii) 
gives that $B_k \in \pweak{\al}$
(because $\al k = 1$ has non-negative Taylor coefficients).
Hence $T$ also is in $\pweak{\al}\cap\weak{k}$.

Conversely, let us assume now that $T \in \pweak{\al}\cap\weak{k}$.
We want to prove that $T$ is a part of $B_k \otimes I_\CE$.
We adapt the argument of \cite[Theorem 2.2]{Mul88}
(where the convergence of the series of operators is in the uniform
operator topology).

By Proposition~\ref{prop-gen-form-intertwining-op},
\begin{equation}\label{eq BkV equals VT}
(B_k \otimes I_\mathfrak{D}) V_D = V_DT.
\end{equation}
Moreover,
\[
\begin{split}
\norm{V_Dx}^2 &= \sum_{n=0}^{\infty} k_n \norm{DT^nx}^2
= \sum_{n=0}^{\infty} k_n \sum_{m=0}^{\infty} \al_m \norm{T^{n+m} x}^2 \\
&= \sum_{j=0}^{\infty} \bigg( \sum_{n+m=j} k_n \al_m \bigg) \norm{T^j x}^2 = \norm{x}^2,
\end{split}
\]
where we have used that $\sum_{n+m=j} k_n \al_m$ is equal to $1$ if $j=0$ 
and is equal to $0$ if $j \ge 1$. 
The re-arrangement of the series is correct since, using that 
$T \in \weak{\al}\cap\weak{k}$, we have
\[
\sum_{n=0}^{\infty} k_n \sum_{m=0}^{\infty} \abs{\al_m} \norm{T^{n+m} x}^2
\lesssim
\sum_{n=0}^{\infty} k_n \norm{T^n x}^2
\lesssim
\norm{x}^2
\]
and the series converges absolutely.

Hence $V_D$ is an isometry. Joined to \eqref{eq BkV equals VT},
this proves that $T$ is unitarily equivalent to a part of
$B_k \otimes I_\mathfrak{D}$.
\end{proof}

Notice that in particular, we showed that the hypotheses of 
Theorem~\ref{Muller-type-thm} imply Hypotheses~\ref{hypo-adm}.

\section{Discussion of Theorem~\ref{Muller-type-thm}}
\label{Analysis of condition Muller type}

In this section we discuss the scope of Theorem~\ref{Muller-type-thm} and
give a series of examples where it applies, whereas Theorem~\ref{thmBCHMc} 
does not. We also will give a direct proof of 
a particular case of Theorem~\ref{thm omega bdd by summable implies algebra}, which 
does not use the results of~\cite{FNikZarr98}. 

Given an analytic function $f(t) = \sum f_n t^n$, 
we denote by $[f]_N$ its truncated polynomial of degree $N$, that is,
\[
[f]_N := f_0 + f_1 t + \ldots + f_N t^N.
\]

\begin{exa}\label{exa alpha positive coeff}
Let $\si_2, \dots, \si_N$ be an arbitrary sequence of signs
(that is, a sequence of numbers $\pm 1$).
We assert that there are functions $\al, k$ meeting all the hypotheses
of Theorem~\ref{Muller-type-thm} such that $\operatorname{sign}(\al_n)=\si_n$,  
for $n=2, \dots, N$.
This is in contrast with Theorem~\ref{thmBCHMc}, where
the Nevanlinna-Pick condition was assumed:
$\al_n\le 0$ for $n\ge 2$.

To prove the existence of $\al$ and $k$ as above,
take a polynomial $\wt{\al}$ of degree $N$ such that $\wt{\al}_0 = 1, \wt{\al}_1 < 0$.
For $n=2,\dots, N$,
we set $\wt{\al}_{n}<0$ if $\si_n=-1$ and $\wt{\al}_{n}=0$
if $\si_n=1$.
Put $\wt{k} := [1/\wt{\al}]_N$.
The formula
\begin{equation}\label{eq Taylor coeff in terms of inverse}
\wt{k}_n = \sum_{\substack{s \ge 1 \\ n_1 + \cdots + n_s = n}}
(-1)^s \, \wt{\al}_{n_1} \cdots \wt{\al}_{n_s}
\end{equation}
shows that all the coefficients of $\wt{k}$ are positive.
We also require that neither $\wt{\al}$ nor the polynomial $\wt{k}$  vanish on $\ol{\D}$.
It is so if, for instance, $|\al_n|$ are sufficiently small for $n=2, \dots, N$.

Now perturb the coefficients $\wt{\al}_{j}$ that are equal to zero,
obtaining a new polynomial $\wh{\al}$ such that
\[
\wh{\al}_j :=
\left\{
\begin{array}{ll}
\ep & \tn{ if } \si_j=1\, \\
\wt{\al}_j & \tn{ otherwise}
\end{array}
\right.
\qquad
(2 \le j \le N).
\]
By continuity, if $\ep > 0$ is small enough, we can
guarantee that the polynomial $\wh{k} = [1/\wh{\al}]_N$ 
also has positive Taylor coefficients, 
and we can also guarantee that $\wh{k}$ 
(which is a slight perturbation of $\wt{k}$) 
does not vanish on $\ol{\D}$.

Finally, take as $k$ any function in $A_W$ with real Taylor coefficients such that
the first ones are
\[
k_0 = \wh{k}_0 = 1, \quad k_1 = \wh{k}_1, \quad \ldots , \quad k_{N} = \wh{k}_{N},
\]
and
\[
\dfrac{k_{n-j}}{k_n} \le C_0 \qquad (\forall n \ge 2j),
\]
for some constant $C_0$.
For instance, one can put $k_n = A n^{-b}$ for $n > N$,
with $A>0$ (small enough) and $b>1$.
Then $k \in A_W$ does not vanish on $\ol{\D}$.

Then obviously $k$ satisfies~\eqref{cond-Nik-Embry-fuerte}
and hence all the hypotheses of Theorem \ref{Muller-type-thm}. The function
$\al := 1/k$ in $A_W$ has the desired pattern of signs.

Finally, it is important to note that $\al_1 = -k_1$ is always negative.
\end{exa}

\begin{rem}
\label{exa alpha positive coeff 2}
It is also easy to see that whenever $\{k_n\}$ satisfies
\eqref{cond-Nik-Embry-fuerte}, any other sequence
$\{\tilde k_n\}$ with $k_0=1$ and
$c<\tilde k_n/k_n<C$ for $n > 1$, where $c, C$ are positive constants,
also satisfies this condition.
In particular, if $\{k_n\}$ satisfies
\eqref{cond-Nik-Embry-fuerte} and
$\{\tilde k_n\}$ is as above, where $C$ is sufficiently small,
then $k(t)$ is invertible in $A_W$, so that all
hypotheses of Theorem~\ref{Muller-type-thm}
are fulfilled.
So there are many examples of functions $k(t)$ meeting these hypotheses,
such that the quotients $k_n / k_{n+1}$ do not converge.
\end{rem}

Let us mention now some remarks on 
Theorem~\ref{thm omega bdd by summable implies algebra}.

\begin{rem}
\label{rem4_3}
It is immediate that the condition
\begin{equation}\label{eq condition on omega_n summable}
\dfrac{\om_n}{\om_j \om_{n-j}} \le \tau_j \quad (\forall n \ge 2j),  
\quad \tn{ where } \quad 
\sum_{j=0}^{\infty} \tau_j < \infty,
\end{equation}
implies \eqref{cond-Nik-ell-1} and \eqref{eq charact Banach algebra omega_n} 
(in particular, $\sup_n \om_{n+1}/\om_n<\infty$). 
Let us give a direct proof of Theorem~\ref{thm omega bdd by summable implies algebra} 
for this particular case.

Statement (i) follows using Proposition \ref{prop Aom Banach alg iff condition series}.

(ii) Put $g := 1/f$. Suppose that $g \not \in \ell^\infty(\om)$. This means that
\[
\sup_{n \ge 0} \, \abs{g_n} \om_n = \infty.
\]
Hence, it is clear that there exists a sequence $\{ \rho_n^0 \}$ in $[0,1]$ 
such that $\rho_n^0 \to 0$ (slowly) and
\begin{equation}\label{eq inclusion of rho_n 0}
\sup_{n \ge 0} \, \abs{g_n} \om_n \rho_n^0 = \infty.
\end{equation}

\begin{claims}
There exists a sequence $\{ \rho_n \}$ with
\begin{equation}\label{eq rho_n sandwich}
\rho_n^0 \le \rho_n \le 1 \quad \tn{ and } \quad \rho_n \to 0
\end{equation}
such that $\wt{\om}_n := \rho_n \om_n$ defines a Banach algebra $\ell^{\infty}(\wt{\om})$.
\end{claims}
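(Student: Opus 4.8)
The plan is to reduce the claim, via Proposition~\ref{prop Aom Banach alg iff condition series}, to building a \emph{non-increasing} sequence $\{\rho_n\}$ that satisfies \eqref{eq rho_n sandwich} together with the single scalar requirement $\sum_{m\ge 0}\tau_m/\rho_m<\infty$, where $\{\tau_j\}$ is the summable majorant of \eqref{eq condition on omega_n summable}. Indeed, writing $\wt{\om}_n=\rho_n\om_n$, the quantity appearing in \eqref{eq charact Banach algebra omega_n} factors as
\[
\frac{\wt{\om}_n}{\wt{\om}_j\wt{\om}_{n-j}}=\frac{\rho_n}{\rho_j\rho_{n-j}}\cdot\frac{\om_n}{\om_j\om_{n-j}}.
\]
Monotonicity of $\{\rho_n\}$ is precisely what tames the first factor: if $m=\min(j,n-j)$, then $\rho_n\le\rho_{n-m}$ gives $\rho_n/(\rho_j\rho_{n-j})\le 1/\rho_m$, while \eqref{eq condition on omega_n summable}, applied to whichever of $j,n-j$ does not exceed $n/2$, gives $\om_n/(\om_j\om_{n-j})\le\tau_m$.

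For the construction I would proceed in three steps. First, from $\sum_j\tau_j<\infty$ I produce a non-increasing $\bar\rho_m\to 0$ with $\bar\rho_m\le 1$ and $\sum_m\tau_m/\bar\rho_m<\infty$: with the tails $R_m:=\sum_{i\ge m}\tau_i$ one may take $\bar\rho_m:=\min(\sqrt{R_m},1)$, since $\tau_m/\sqrt{R_m}=(R_m-R_{m+1})/\sqrt{R_m}\le 2(\sqrt{R_m}-\sqrt{R_{m+1}})$ telescopes (the degenerate case of finitely supported $\tau$, where $\ell^\infty(\om)$ is already a Banach algebra, being treated separately). Second, I replace $\{\rho_n^0\}$ by its non-increasing envelope $\widehat\rho^{\,0}_m:=\sup_{i\ge m}\rho_i^0$, which tends to $0$ because $\rho^0_i\to 0$, and obeys $\rho^0_m\le\widehat\rho^{\,0}_m\le 1$. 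Third, I set $\rho_m:=\max(\bar\rho_m,\widehat\rho^{\,0}_m)$.

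This $\{\rho_m\}$ is non-increasing, being the pointwise maximum of two non-increasing sequences; it lies in $(0,1]$, tends to $0$, and dominates $\rho^0_m$, so \eqref{eq rho_n sandwich} holds; and $\rho_m\ge\bar\rho_m$ preserves $\sum_m\tau_m/\rho_m\le\sum_m\tau_m/\bar\rho_m<\infty$. Plugging this into the factorization, each summand in \eqref{eq charact Banach algebra omega_n} is at most $\tau_{\min(j,n-j)}/\rho_{\min(j,n-j)}$; as $j$ ranges over $0,\dots,n$, each value $m\le n/2$ of $\min(j,n-j)$ occurs at most twice, so the whole sum is bounded by $2\sum_{m\ge 0}\tau_m/\rho_m$, uniformly in $n$. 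By Proposition~\ref{prop Aom Banach alg iff condition series}, $\ell^\infty(\wt{\om})$ is then a Banach algebra.

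The one genuinely delicate point is that the three demands on $\{\rho_m\}$ --- monotonicity (needed for the convolution estimate), staying above the prescribed floor $\rho_m^0$, and decaying fast enough to keep $\sum\tau_m/\rho_m$ finite --- pull against one another. The resolution rests on two observations: enlarging $\rho_m$ can only help the summability, since $1/\rho_m$ then decreases, and the maximum of two non-increasing sequences remains non-increasing. Taking $\rho_m=\max(\bar\rho_m,\widehat\rho^{\,0}_m)$ exploits both at once, so that no quantitative comparison between the decay rates of $\bar\rho$ and $\rho^0$ is required; the only input needed from $\{\rho_n^0\}$ is that it tends to $0$.
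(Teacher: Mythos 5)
Your proof is correct and takes essentially the same route as the paper's: both factor the convolution quotient $\wt{\om}_n/(\wt{\om}_j\wt{\om}_{n-j})$, use a non-increasing $\{\rho_n\}$ dominating $\{\rho_n^0\}$ whose decay is slow enough that $\sum_j \tau_j/\rho_j<\infty$, and conclude via Proposition~\ref{prop Aom Banach alg iff condition series}; your $\bar\rho_m=\min(\sqrt{R_m},1)$ with the telescoping estimate is simply an explicit instance of the paper's choice of $c_j\nearrow\infty$ with $\sum_j c_j\tau_j<\infty$ (namely $c_j=1/\bar\rho_j$). Incidentally, your ``degenerate case'' of finitely supported $\tau$ is vacuous, since $\om_n/(\om_j\om_{n-j})>0$ forces $\tau_j>0$ for every $j$, hence $R_m>0$ for all $m$.
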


Indeed, since $\sum \tau_j < \infty$, there exists a sequence of positive numbers
$\{ c_j \}$ such that $c_j \nearrow\infty$
and still $\sum c_j \tau_j < \infty$. Take any sequence $\{\rho_n\}$ that decreases,
tends to zero, and
satisfies $\rho_n\ge \max(\rho^0_n, 1/c_n)$. Then, for $\wt{\om}_n := \rho_n \om_n$ we have
\[
\dfrac{\wt{\om}_n}{\wt{\om}_j \wt{\om}_{n-j}} 
= \dfrac{\om_n}{\om_j \om_{n-j}} \dfrac{\rho_n}{\rho_j \rho_{n-j}} 
\le \dfrac{\om_n}{\om_j \om_{n-j}} \dfrac{1}{\rho_j} 
\le \tau_j c_j \qquad (\forall n \ge 2j).
\]
Since $ \sum \tau_j c_j < \infty$, 
Proposition~\ref{prop Aom Banach alg iff condition series} implies that
$\ell^{\infty}(\wt{\om})$ is a Banach algebra, and the proof of the claim is completed.

Now fix $\{\wt{\om}_n\}$ as in the claim. 
We may assume that $(\rho^0_n)^{1/n}\to 1$ and therefore $(\rho_n)^{1/n}\to 1$.
Since the polynomials are dense in the Banach algebra $\ell_0^\infty (\wt{\om})$, 
any complex homomorphism $\chi$
on $\ell_0^\infty (\wt{\om})$ is determined by its value on the power series $t$.
So the map $\chi\mapsto\chi(t)$ is injective and continuous from
the spectrum (the maximal ideal space) of  $\ell_0^\infty (\wt{\om})$ to $\C$.
Since $\wt{\om}_n^{1/n}\to 1$, its image contains $\D$ and is contained in $\overline{\D}$.
Hence the spectrum of $\ell_0^\infty (\wt{\om})$ is exactly the set
$\{\chi_\la: \la\in\overline{\D}\}$, where $\chi_\la(f)=f(\la)$.
(We borrow this argument from \cite{FNikZarr98}.) As
\[
f_n \wt{\om}_n = (f_n \om_n) \rho_n \to 0,
\]
we have $f \in \ell_0^\infty (\wt{\om})$. Then, using the Gelfand theory
(see, for instance, \cite[Chapter 10]{Rud1973}),
we get that $g = 1/f \in \ell_0^\infty (\wt{\om})$, which contradicts 
\eqref{eq inclusion of rho_n 0}. 
Therefore, the assumption $g  \notin \ell^\infty(\om)$ is false, 
as we wanted to prove.
\end{rem}

\begin{rem}
\label{rem4_4}
Notice that the above characterization of the spectrum of the algebra $\ell^\infty_0(\wt\om)$
(see the above Remark~\ref{rem4_3})
implies the following fact: the conditions~\eqref{eq charact Banach algebra omega_n} and
$\om_n^{1/n}\to 1$ imply that $\sum_n 1/\om_n<\infty$. 
This can be proved in an elementary way, without recurring to the Gelfand theory.

Indeed, by \eqref{eq charact Banach algebra omega_n}, there exists a constant $C>0$
such that
\[
\sum_{j=1}^{n} \dfrac{\om_n}{\om_j \om_{n-j}} \le C
\]
for every $n \ge 1$. Fix a positive integer $L$. Then obviously, for every $n \ge L$,
\begin{equation}
\label{eq sum quot om le C}
\sum_{j=1}^{L} \dfrac{\om_n}{\om_j \om_{n-j}} \le C.
\end{equation}
Let us see that
\begin{equation}\label{eq limsup min quotient omegas ge 1}
\limsup_{n \to \infty} \, \min_{1 \le j \le L} \,  \dfrac{\om_n}{\om_{n-j}} \ge 1.
\end{equation}
Indeed, if \eqref{eq limsup min quotient omegas ge 1} were false, then
there would exist some $r < 1$ and a positive integer $N$ such that
\[
\min_{1 \le j \le L} \,  \dfrac{\om_n}{\om_{n-j}} \le r \quad \text{for $n \ge N$.}
\]
From this, it is easy to see that
\[
\om_n \le r^{s_n} \, \left( \max_{0 \le k \le N} \,  \om_k \right),
\qquad
s_n := \left[ \dfrac{n-N}{L} \right] + 1,
\]
where $[a]$ denotes the integer part of $a$. Since $s_n$ behaves asymptotically as $n/L$,
it follows that $\limsup_{n \to \infty} \om_n^{1/n} \le r^{1/L} < 1$, 
which contradicts the hypothesis that $\om_n^{1/n} \to 1$. 
Therefore, \eqref{eq limsup min quotient omegas ge 1} is true.

Now, using \eqref{eq sum quot om le C}, it follows that
\[
C \ge \sum_{j=1}^{L} \dfrac{\om_n}{\om_j \om_{n-j}}
\ge
\left( \min_{1 \le j \le L} \, \dfrac{\om_n}{\om_{n-j}} \right) \sum_{j=1}^{L} \dfrac{1}{\om_j}.
\]
Taking $\limsup$ when $n \to \infty$, and using \eqref{eq limsup min quotient omegas ge 1},
we get that $\sum 1/ \om_j$ converges.
\end{rem}

The following statement shows that in the subcritical case,
the hypotheses of Theorem~\ref{Muller-type-thm} imply that the radius of convergence
of the series for $\al$ is equal to one.

\begin{prop}
If
$\lim k_n^{1/n}=1$ and $\al$ is of subcritical type,
then $\al$ does not continue analytically to any disc $R\D$,
where $R>1$.
\end{prop}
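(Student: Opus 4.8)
The plan is to locate a singularity of $k$ at the specific point $t=1$ and then transfer that obstruction to $\al$ through the relation $\al=1/k$. Since we assume $\lim k_n^{1/n}=1$, the power series $\sum_n k_n t^n$ has radius of convergence exactly $1$, so $k$ must have at least one singular point on the unit circle $\T$. Because all the Taylor coefficients satisfy $k_n>0$, the Vivanti--Pringsheim theorem guarantees that the point on the positive real axis at the boundary of convergence, namely $t=1$, is \emph{itself} a singular point of $k$. In particular, $k$ admits no analytic continuation to any neighborhood of $t=1$. This is the one genuinely nontrivial input.

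First I would argue by contradiction: suppose $\al$ continues analytically to $R\D$ for some $R>1$. Then the continuation is analytic on an open set containing $\ol\D$, and in particular it is analytic in a neighborhood of $t=1$. Since $\al\in A_W$, its boundary value at $t=1$ is $\al(1)=\sum_n\al_n$, and in the subcritical case we have $\al(1)>0$. Because $\al$ is continuous with $\al(1)\ne 0$, there is an open neighborhood $U$ of $t=1$ on which $\al$ does not vanish, and hence $1/\al$ is analytic on $U$.

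Next I would observe that $1/\al$ agrees with $k$ on $U\cap\D$, since $k=1/\al$ as power series throughout $\D$. Therefore $1/\al$ furnishes an analytic continuation of $k$ to the full neighborhood $U$ of $t=1$, which reaches beyond the unit circle. This contradicts the fact, established in the first step, that $t=1$ is a singular point of $k$. Consequently no such $R>1$ can exist, which is exactly the assertion.

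The only point requiring emphasis, rather than real difficulty, is that the subcriticality hypothesis $\al(1)>0$ is precisely what prevents the singularity of $k$ at $t=1$ from being ``cancelled'' by a zero of $\al$ there; this non-vanishing is what lets us invert $\al$ in a neighborhood of $t=1$ and complete the transfer. Everything else is routine bookkeeping with the relation $\al k=1$ and the continuity afforded by $\al\in A_W$.
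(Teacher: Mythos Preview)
Your proof is correct. Both your argument and the paper's ultimately hinge on the positivity of the coefficients $k_n$, but they exploit it differently. The paper proves the global bound $|k(t)|\le k(1)$ on $\D$ (immediate from $k_n\ge 0$), which gives $|\al(t)|\ge \al(1)>0$ on all of $\ol\D$; hence if $\al$ extended to $R\D$ it would be zero-free on some disc $R'\D$ with $R'>1$, forcing $k=1/\al$ to have radius of convergence greater than $1$. You instead invoke the Vivanti--Pringsheim theorem to pin down $t=1$ as a singular point of $k$, and then work \emph{locally}: the subcritical hypothesis $\al(1)>0$ lets you invert $\al$ in a neighborhood of $t=1$ alone, and this local continuation of $k$ already contradicts the singularity. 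Your route is perhaps a bit slicker since it avoids the global bound, while the paper's approach extracts the stronger intermediate fact that $|\al|\ge\al(1)$ throughout $\D$, which may be of independent interest.
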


\begin{proof}
Since $k(t)$ has nonnegative Taylor coefficients, we have
$|k(t)|\le k(1)$ for all $t\in \D$. Using that $k=1/\al$, it follows that in the subcritical
case, $|\al(t)|\ge \al(1)>0$ for any $t\in \D$. So,
$\al$ cannot continue analytically to any disc $R\D$, where $R>1$, because in this case,
the radius of convergence of the Taylor series for $k$ would be greater than $1$.
\end{proof}

\section{Finite Defect}
\label{section finite defect}

It is well-known that in the classical Sz.-Nagy-Foias model,
the case of a finite rank (or Hilbert-Schmidt) defect
operator is an important one, where much more tools and results
are available.
In this section, we derive some consequences of our model theorems
for the case when an operator
$T\in\pweak{\al}$ is $\al$-modelable and the defect operator
$D=(\al(T^*, T))^{1/2}$ is of finite rank.

We will assume that the reproducing kernel Hilbert space $\CR_k$ is a Banach algebra
with respect to the multiplication of power series.
By~\cite[Proposition~32]{Shields-review}, it suffices to assume that
\[
\sup_n\sum_{j=0}^n \frac {k_j^2k_{n-j}^2}{k_n^2} <\infty;
\]
compare with the condition \eqref{eq charact Banach algebra omega_n}.
Put
\[
m_n= \inf_j \frac{k_j}{k_{n+j}} , \quad r_1=\lim_{n\to\infty} m_n^{1/n}. 
\]
This limit exists, see \cite[Proposition 12]{Shields-review}. 

We will assume that
\begin{equation}
\label{eq-r1}
r_1=\lim_{n\to\infty} k_n^{1/n}=1.
\end{equation}
Both equalities hold, in particular, if
$\lim k_{n+1}/k_n=1$.
The same is true if, for instance,
the last limit does not exist, but
$0<\sigma < k_n < C< \infty$ for all $n$ and
there is some $m \ge 2$ such that
$\lim_n k_{n+m}/k_n=1$.
We also are assuming here that the isometric part $S$ is not present in
the model of $T$. Hence, $T$ is unitarily equivalent to the restriction of
the backward shift $B_k\otimes I_\FD$ on the
space $\CH_k\otimes \FD$ to an invariant subspace $\Space$.
More generally, this applies to similarity instead of the
unitary equivalence 
(we bear in mind models of linear operators up to similarity, 
which are established in~\cite{ABY2}).

Here we prove the following result.

\begin{thm}
\label{thm-spectr-zero-set}
Suppose that $T$ is similar to a part of $B_k\otimes I_\FD$, acting on the space $\CH_k\otimes
\FD$, where $\CR_k$ is a Banach algebra and $\FD$ is finite dimensional. 
If the spectrum $\si(T)$ does not cover the
open disc $\D$, then $\si(T)\cap \D$ is contained in the zero set of a
non-zero function in~$\CR_k$.
\end{thm}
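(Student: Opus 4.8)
The plan is to pass to the dual (annihilator) picture, identify $\si(T)\cap\D$ with a rank-drop locus of a forward-shift invariant subspace, and then exhibit the required function as a determinant. Since similarity preserves the spectrum, I would assume at once that $T=M:=(B_k\otimes I_\FD)|_{\CL}$ for an invariant subspace $\CL\ss\CH_k\otimes\FD$, so $\si(T)=\si(M)$. By Remark~\ref{rem duality Rk and Hk}, $\CR_k$ is dual to $\CH_k$ under the sesquilinear pairing $\ip{f}{g}=\sum f_n\ol{g_n}$, and under it $B_k$ is dual to multiplication by $z$. Hence $N:=\CL^{\circ}\ss\CR_k\otimes\FD$ is invariant under $M_z:=(\text{multiplication by }z)\otimes I_\FD$, and $M$ is dual to the operator $\bar S$ that $M_z$ induces on $Q:=(\CR_k\otimes\FD)/N$. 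As the pairing is sesquilinear, $\si(M)=\ol{\si(\bar S)}$, so the task becomes to locate the $\la\in\D$ at which $\bar S-\la$ is not invertible.

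For $\la\in\D$ set $V(\la):=\{g(\la):g\in N\}\ss\FD$. I would first record that multiplication by $(z-\la)$ on $\CR_k\otimes\FD$ is bounded below with range $\{h:h(\la)=0\}$, a subspace of codimension $d:=\dim\FD$; this rests on the boundedness of point evaluations on $\CR_k$ over $\D$ (secured by $k_n^{1/n}\to1$) together with the division property $h(\la)=0\Rightarrow h/(z-\la)\in\CR_k$. Applying the snake lemma to the short exact sequence $0\to N\to\CR_k\otimes\FD\to Q\to0$ and the three multiplication-by-$(z-\la)$ maps produces the exact sequence
\[
0\longrightarrow\ker(\bar S-\la)\longrightarrow N/(z-\la)N\xrightarrow{\ \mathrm{ev}_\la\ }\FD\longrightarrow\mathrm{coker}(\bar S-\la)\longrightarrow0 ,
\]
whose middle map sends $g+(z-\la)N$ to $g(\la)$ and has image $V(\la)$. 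Reading it off, $\bar S-\la$ is invertible exactly when $V(\la)=\FD$ and the index $\dim\bigl(N/(z-\la)N\bigr)=d$.

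Now I would exploit that $\si(T)$ does not cover $\D$: it furnishes some $\la_0\in\D$ with $\bar S-\la_0$ invertible, so there $V(\la_0)=\FD$ and the index equals $d$. Invoking the constancy over $\D$ of the index of a shift-invariant subspace (the ``index of an invariant subspace'' input the section alludes to, which also excludes the high-index subspaces whose spectrum would fill $\D$), the index is $d$ at every $\la\in\D$; consequently $\si(\bar S)\cap\D=\{\la:V(\la)\neq\FD\}$ and $\si(M)\cap\D=\{\la:V(\bar\la)\neq\FD\}$. To build the function, pick $g_1,\dots,g_d\in N$ with $g_1(\la_0),\dots,g_d(\la_0)$ a basis of $\FD$, fix a basis of $\FD$, and put $\psi(z):=\det\bigl[g_1^{\sharp}(z)\,|\cdots|\,g_d^{\sharp}(z)\bigr]$, where $g^{\sharp}(z):=\ol{g(\bar z)}$ (still in $\CR_k$, as the $k_n$ are real). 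Since $\CR_k$ is a Banach algebra, every entry, and hence $\psi$, lies in $\CR_k$; as $\psi(\bar\la_0)\neq0$ we have $\psi\not\equiv0$; and whenever $V(\bar\la)\neq\FD$ the $d$ vectors $g_i(\bar\la)$ are dependent, forcing $\psi(\la)=0$. Thus $\si(T)\cap\D\ss\{\psi=0\}$, as required.

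The hard part will be the index step: proving (or citing in exactly the form needed) that $\dim N/(z-\la)N$ is constant on $\D$ and equal to $d$ under the present weak hypotheses ($k_n^{1/n}\to1$ and $\CR_k$ a Banach algebra, without $k_n/k_{n+1}\to1$), and in parallel verifying the division property and the closed range of multiplication by $(z-\la)$ on $\CR_k\otimes\FD$ so that the snake-lemma dimensions are the genuine (topological) kernels and cokernels. A secondary point to watch is the conjugation produced by the sesquilinear duality, which is precisely why $\psi$ is assembled from the reflected generators $g_i^{\sharp}$ rather than the $g_i$.
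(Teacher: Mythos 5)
Your reduction to the dual picture coincides with the paper's (annihilator $N=\CJ=\Space^{\perp}\ss\CR_k^d$, quotient operator $\CM_z$, fibers $V(\la)=\CF(\la)$), and your snake-lemma bookkeeping is correct: with $(z-\la)$ injective on $\CR_k^d$ and $(z-\la)\CR_k^d=\{h: h(\la)=0\}$ of codimension $d$, exactness gives $\mathrm{coker}(\bar S-\la)\cong\FD/V(\la)$ and $\ker(\bar S-\la)\cong\{g\in N: g(\la)=0\}/(z-\la)N$, and the determinant $\psi$ built from the reflected generators finishes the argument. One small reordering: derive the division property from boundedness below (that is the paper's Lemma~\ref{lem-division}, which rests on \cite[Proposition~13]{Shields-review}), rather than listing division and bounded-below as parallel verifications; your sketch as written risks a slight circularity there. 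Your explicit handling of the conjugation coming from the sesquilinear duality, via $g^{\sharp}(z)=\overline{g(\bar z)}$, is in fact more careful than the paper, which writes $\si(T)\cap\D=\si(\CJ)\cap\D$ where strictly a reflection is needed.

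The one step you leave open --- constancy of $\dim N/(z-\la)N$ on $\D$ --- is genuinely different from anything in the paper, but it does hold, so your route goes through: since $M_z-\la$ is bounded below on $\CR_k^d$ for every $\la\in\D$ (the section's standing hypothesis $r_1=\lim k_n^{1/n}=1$ plus \cite[Proposition~13]{Shields-review}), its restriction to the closed invariant subspace $N$ is injective with closed range, hence upper semi-Fredholm with zero kernel; Kato's stability of the semi-Fredholm index along the norm-continuous family $\la\mapsto (M_z-\la)|_N$ makes $\dim N/(z-\la)N\in\N\cup\{\infty\}$ locally constant, hence constant on the connected set $\D$. (Alternatively, cite Carlsson~\cite{Carlsson09}, which the paper itself points to for vector-valued index questions.) The paper instead avoids index theory entirely: Theorem~\ref{thm-si-CL-si-Mz} identifies $\si(\CM_z)\cap\D$ with the fiber-deficiency set \emph{unconditionally}, by proving at each full-fiber point the implication $(z-\la)h\in\CJ\Rightarrow h\in\CJ$ via the matrix $\Phi$ and $\vp=\det\Phi$ (writing $h=\vp h-(\vp-1)h$ and using that $\CJ$ is an $\CR_k$-module because $\CR_k$ is a Banach algebra, plus Lemma~\ref{lem-division}), together with the explicit bounded inverse $h=(z-\la)^{-1}\big(f-\Phi f(\la)\big)$; the opposite inclusion is your cokernel computation in the guise of an evaluation functional. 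So the paper's argument buys an unconditional, constructive spectral identification with the algebra hypothesis at its core, while yours localizes the Banach-algebra assumption to the single determinant $\psi$ (for $d=1$ it is not needed at all) at the price of Fredholm machinery and of needing the off-spectrum point $\la_0$ to normalize the index --- which the theorem's hypothesis supplies, so nothing is lost for the statement at hand.
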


Let us start with some preliminary remarks.
Suppose that $T$ is as in the above Theorem~\ref{thm-spectr-zero-set}.
That is, $T$ is similar to $(B_k \otimes I_{\FD}) | \Space$,
where $\Space \ss \CH_k \otimes \FD$
is an invariant subspace of $B_k \otimes I_{\FD}$.
By fixing a basis in $\FD$, we may assume that $\FD=\C^d$, where $d=\dim \FD$. 
We will identify the space $\CH_k\otimes \FD$ with $\CH_k^d=\oplus_1^d \CH_k$, 
whose elements are columns with entries in $\CH_k$. 
The adjoint of $B_k$ on the space $\CH_k^d$ is the multiplication operator 
$M_z$ on the space $\CR_k^d$; 
this later space can be seen as a Banach module over the Banach algebra $\CR_k$. 
Put
\[
\CJ=\Space^\perp\subset \CR_k\otimes \FD.
\]
Then
$\CJ$ is $M_z$-invariant, and $T^*$ is similar to the quotient operator
\[
\CM_z : \CR_k^d / \CJ \to \CR_k^d / \CJ,
\qquad
\CM_z [f] = [zf].
\]
Here $[f]\in \CR_k^d / \CJ$ denotes the coset of a function $f$ in $\CR_k^d$.
We adapt some ideas from Richter's work~\cite{Richter87}, which
treated the case $d=1$.

\begin{defi}
(see ~\cite{Richter87}).
Let $\CJ$ be a subspace of $\CR_k^d$, invariant under $M_z$.

(1)
Given a point $\la\in\overline{\D}$, the space
\[
\CF(\la)=\CF_{\CJ}(\la) := \{ g(\la) \, : \, g \in \CJ \}
\]
will be referred to as \emph{the fiber of $\CJ$ over $\la$.} Note that $\CF(\la)$ is a
subspace of $\C^d$.

(2) By \emph{the spectrum of $\CJ$} we understand the set
\[
\si(\CJ) := \big\{ \la\in \overline{\D}:\quad  \CF_{\CJ}(\la)\ne \C^d\big\}.
\]
\end{defi}

It will be shown that Theorem~\ref{thm-spectr-zero-set} is an easy consequence
of the following result.

\begin{thm}
\label{thm-si-CL-si-Mz}
Given any subspace $\CJ$ of $\CR_k^d$, invariant under $M_z$, one has
$$\si(\CJ) \cap \D = \si(\CM_z) \cap \D.$$
\end{thm}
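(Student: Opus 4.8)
The plan is to fix $\la\in\D$ and relate invertibility of $\CM_z-\la$ on $\CR_k^d/\CJ$ to whether the fiber $\CF(\la)$ fills all of $\C^d$, proving the two inclusions separately. Throughout I would use two standing facts. First, since $\CR_k$ is a Banach algebra in which the polynomials are dense and multiplication is continuous, any closed $M_z$-invariant $\CJ$ is automatically a submodule: $\phi g\in\CJ$ for every $\phi\in\CR_k$ and $g\in\CJ$ (approximate $\phi$ by polynomials and pass to the limit). Second, I would record a scalar \emph{division lemma}: for $\la\in\D$ one has $(z-\la)\CR_k=\{f\in\CR_k:f(\la)=0\}$, and componentwise $(z-\la)\CR_k^d=\{f\in\CR_k^d:f(\la)=0\}$. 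This is where \eqref{eq-r1} enters: $M_z$ on $\CR_k$ is a forward weighted shift whose inner and outer spectral radii both equal $1$, so $M_z-\la$ is bounded below, hence has closed range, for $\la\in\D$; since $f\mapsto f(\la)$ is a nonzero functional vanishing on that range and $M_z^*-\bar\la$ annihilates exactly the one-dimensional space spanned by the reproducing kernel at $\la$, the range has codimension one and thus coincides with $\{f(\la)=0\}$.

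For the inclusion $\si(\CJ)\cap\D\subseteq\si(\CM_z)\cap\D$ I would show that $\CF(\la)\ne\C^d$ forces $\CM_z-\la$ to be non-surjective. The assignment $f\mapsto f(\la)+\CF(\la)$ sends $\CJ$ into $\CF(\la)$, so it descends to a bounded surjection $\ol{\Phi}\colon\CR_k^d/\CJ\to\C^d/\CF(\la)$, the target being nonzero because $\CF(\la)$ is a proper (closed, as $\C^d$ is finite dimensional) subspace. Since $\bigl((z-\la)f\bigr)(\la)=0$, the range of $\CM_z-\la$ lies in the proper closed subspace $\ker\ol{\Phi}$, so $\CM_z-\la$ is not surjective and $\la\in\si(\CM_z)$.

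For the reverse inclusion I would fix $\la\in\D$ with $\CF(\la)=\C^d$ and prove $\CM_z-\la$ is bijective, hence invertible by the open mapping theorem. Choose $g_1,\dots,g_d\in\CJ$ with $g_1(\la),\dots,g_d(\la)$ a basis of $\C^d$, and set $G(z)=[g_1(z)\mid\cdots\mid g_d(z)]$, a matrix over $\CR_k$ with $\det G(\la)\ne0$. Surjectivity is then easy: given $g\in\CR_k^d$, subtract the combination of the $g_i$ matching $g(\la)$ to obtain $\tilde g\in\CR_k^d$ with $\tilde g(\la)=0$; by the division lemma $\tilde g=(z-\la)f$, so $[g]=(\CM_z-\la)[f]$.

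The hard part is injectivity, and the obstacle is that $\CJ$ is only a module over $\CR_k$ under the diagonal action, not over $M_d(\CR_k)$, so one cannot directly divide a vector by $z-\la$ inside $\CJ$. The trick I would use to bypass this is the adjugate identity. For any $f\in\CR_k^d$ put $p:=\operatorname{adj}(G)\,f\in\CR_k^d$; then $\det G\cdot f=G\operatorname{adj}(G)f=Gp=\sum_i p_i g_i\in\CJ$, since each $p_i g_i\in\CJ$. Thus $\det G\cdot f\in\CJ$ for every $f$. Now suppose $(\CM_z-\la)[f]=0$, i.e.\ $h:=(z-\la)f\in\CJ$. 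Writing $\det G=\det G(\la)+(z-\la)\eta$ with $\eta\in\CR_k$ (scalar division lemma), I get $\det G(\la)\,f=\det G\cdot f-\eta\,(z-\la)f=\det G\cdot f-\eta h\in\CJ$, because $\det G\cdot f\in\CJ$ and $\eta h\in\CJ$. Since $\det G(\la)\ne0$, this gives $f\in\CJ$, i.e.\ $[f]=0$, proving injectivity. Combining the two inclusions yields $\si(\CJ)\cap\D=\si(\CM_z)\cap\D$.
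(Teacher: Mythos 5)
Your proposal is correct and follows essentially the same route as the paper: the module property of $\CJ$ via polynomial approximation, the division lemma for $M_z-\la$ based on \eqref{eq-r1}, and the matrix-of-fiber-sections trick with the adjugate identity $\det G\cdot f = G\operatorname{adj}(G)f\in\CJ$, which is exactly the paper's Claim (there normalized so that $\Phi(\la)=I$ and $\vp(\la)=1$). The only cosmetic deviations are that you invoke the open mapping theorem where the paper writes down the explicit bounded solution $h(z)=(z-\la)^{-1}\bigl(f(z)-\Phi(z)f(\la)\bigr)$, and that you detect non-invertibility through the quotient map onto $\C^d/\CF(\la)$ rather than the paper's antilinear functional $\wt{\Psi}$ annihilated by $(\CM_z-\la)^*$; both are equivalent duality arguments.
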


In the proof, we will use the following lemma

\begin{lemma}
\label{lem-division}
If
$g \in \CR_k$, $\la\in\D$ and $g(\la)=0$, then
$(z-\la)^{-1}g(z)\in \CR_k$.
\end{lemma}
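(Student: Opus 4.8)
The plan is to write the candidate quotient $h(z) := (z-\la)^{-1}g(z)$ explicitly as a power series and to bound its $\CR_k$-norm by that of $g$. Since $g(\la)=0$ we may replace $g(z)$ by $g(z)-g(\la)$, and the elementary identity $\frac{z^m-\la^m}{z-\la}=\sum_{i=0}^{m-1}\la^{m-1-i}z^i$ gives, after collecting powers of $z$,
\[
h(z)=\sum_{j\ge 0}h_j z^j,\qquad h_j=\sum_{i\ge 0}g_{j+1+i}\,\la^{i}.
\]
Recall that $\CR_k=\CH_{\tilde k}$ with $\tilde k_n=1/k_n$, so $\|g\|_{\CR_k}^2=\sum_n |g_n|^2/k_n$; hence it is enough to control $\sum_j |h_j|^2/k_j$.

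Writing $\rho=|\la|<1$ and $u_n:=|g_n|/k_n^{1/2}$ (so that $\|u\|_{\ell^2}=\|g\|_{\CR_k}$), I would estimate
\[
\frac{|h_j|}{k_j^{1/2}}\le\sum_{i\ge 0}\frac{|g_{j+1+i}|}{k_{j+1+i}^{1/2}}\Big(\frac{k_{j+1+i}}{k_j}\Big)^{1/2}\rho^{i}\le\sum_{i\ge 0}u_{j+1+i}\,\frac{\rho^{i}}{m_{i+1}^{1/2}},
\]
where in the last inequality I used the definition $m_{i+1}=\inf_{l}k_l/k_{i+1+l}$, which yields the uniform bound $k_{j+1+i}/k_j\le 1/m_{i+1}$ for every $j$. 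The right-hand side is a fixed linear combination of shifts of the single sequence $u$, so by the triangle inequality in $\ell^2$ together with shift-invariance of the $\ell^2$-norm (that is, Young's convolution inequality),
\[
\Big(\sum_j\frac{|h_j|^2}{k_j}\Big)^{1/2}\le\Big(\sum_{i\ge 0}\frac{\rho^{i}}{m_{i+1}^{1/2}}\Big)\,\|g\|_{\CR_k}.
\]
Thus the lemma reduces to proving that the constant $\sum_{i\ge 0}\rho^{i}/m_{i+1}^{1/2}$ is finite.

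This convergence is exactly where the hypothesis $r_1=\lim_n m_n^{1/n}=1$ enters, and it is the main obstacle. Each individual term is finite because $m_n>0$ for every $n\ge 1$ (note $m_n\ge m_1^{\,n}$ by supermultiplicativity of $\{m_n\}$, and $m_1>0$ since the ratios $k_{n+1}/k_n$ are bounded under the standing assumptions). For the tail, since $m_n^{1/n}\to 1$, for any $\ep>0$ we have $m_n\ge(1-\ep)^n$ for all large $n$; choosing $\ep$ with $1-\ep>\rho^2$ gives $\rho^{i}/m_{i+1}^{1/2}\le(1-\ep)^{-1/2}\big(\rho/(1-\ep)^{1/2}\big)^{i}$ for large $i$, a convergent geometric series. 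Hence the constant is finite, $h\in\CR_k$, and the lemma follows. The only delicate point is this competition between the at-most-sub-geometric decay of $\{m_n^{1/2}\}$ guaranteed by $r_1=1$ and the genuine geometric decay $\rho^{i}$ coming from $\la\in\D$; the rest is routine bookkeeping.
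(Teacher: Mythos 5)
Your proof is correct, and it takes a genuinely different route from the paper's. The paper argues softly: citing Shields' Proposition~13, it notes that $M_z-\la$ is bounded below for $\abs{\la}<r_1=1$, hence has closed range, and then $g\in\Ran(M_z-\la)$ if and only if $g$ is orthogonal to $\ker(M_z^*-\bar\la)$, which is one-dimensional and spanned by the evaluation functional at $\la$; the hypothesis $g(\la)=0$ is exactly that orthogonality. You instead compute the quotient's coefficients $h_j=\sum_{i\ge0}g_{j+1+i}\la^i$ and bound the norm directly, via $k_{j+1+i}/k_j\le 1/m_{i+1}$ and Minkowski's inequality, obtaining $\|h\|_{\CR_k}\le\bigl(\sum_{i\ge0}\rho^i m_{i+1}^{-1/2}\bigr)\|g\|_{\CR_k}$, with the tail of the constant controlled by playing $m_n^{1/n}\to1$ against the geometric factor $\rho^i$. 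In effect you have inlined a quantitative proof of the very fact the paper imports from Shields (boundedness below of $M_z-\la$ on $\abs{\la}<r_1$), with an explicit constant; this makes your argument self-contained, while the paper's version is shorter and avoids all coefficient bookkeeping. One small repair is needed: your parenthetical claim that each $m_{i+1}>0$ because ``the ratios $k_{n+1}/k_n$ are bounded under the standing assumptions'' is not supported as stated --- the standing assumptions of that section ($\CR_k$ a Banach algebra, which gives $\sup_n k_n/k_{n+1}<\infty$, i.e.\ $M_z$ bounded, together with $r_1=\lim m_n^{1/n}=1$) bound the ratios in the \emph{opposite} direction and nowhere assert $\sup_n k_{n+1}/k_n<\infty$. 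But the positivity of every $m_n$, which is all you actually use, does follow from these assumptions: if $m_{n_0}=0$, pick $j_i$ with $k_{j_i}/k_{n_0+j_i}\to0$; then for every $n\ge0$ one has $m_{n_0+n}\le \bigl(k_{j_i}/k_{n_0+j_i}\bigr)\bigl(k_{n_0+j_i}/k_{n_0+n+j_i}\bigr)\le \bigl(k_{j_i}/k_{n_0+j_i}\bigr)\|M_z\|^{2n}\to0$, so $m_N=0$ for all $N\ge n_0$, contradicting $m_N^{1/N}\to1$. With that substitution (your supermultiplicativity remark then becomes unnecessary), your proof is complete.
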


\begin{proof}
Assume that $|\la|<r_1=1$.
By~\cite[Proposition~13]{Shields-review},
the operator $M_z-\la$ is bounded from below.
Notice that $(z-\la)^{-1}g(z)\in \CR_k$ if and only if $g$ belongs to the
closed subspace $\operatorname{Ran}(M_z-\la)$.
This happens if and only if $g$ is orthogonal to $\ker(M_z^*-\bar\la)$. 
This kernel is one-dimensional and is generated by the antilinear 
evaluation functional $g\mapsto \overline{g(\la)}$, 
which implies our assertion.
\end{proof}

\begin{proof}[Proof of Theorem~\ref{thm-si-CL-si-Mz}]
First we observe that $\eta\cdot \CJ\subset \CJ$ for any
$\eta$ in the algebra $\CR_k$, which is easy to get,
approximating $\eta$ by polynomials.

Assume first that $\la \in \D$, but $\la \notin \si(\CJ)$.
This means that $\CF(\la) = \C^d$.
Let us prove that $\la \not \in \si(\CM_z)$
(this will give the inclusion $\si(\CM_z) \cap \D \ss \si(\CJ) \cap \D$).
That is, we will see that $\CM_z-\la$ is invertible in $\CR_k^d / \CJ$.

\begin{claims}
If $h \in \CR_k^d$ and  $(z-\la)h \in \CJ$, then
$h\in\CJ$.
\end{claims}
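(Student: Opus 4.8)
The plan is to use the hypothesis $\la\notin\si(\CJ)$, that is $\CF(\la)=\C^d$, to manufacture a \emph{scalar} multiplier $\delta\in\CR_k$ with $\delta(\la)\neq0$ that carries $h$ into $\CJ$; combined with the standing hypothesis $(z-\la)h\in\CJ$, a coprimality (Gelfand) argument in the Banach algebra $\CR_k$ then forces $h\in\CJ$.

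First I would fix the standard basis $e_1,\dots,e_d$ of $\C^d$ and, using that the evaluation map $g\mapsto g(\la)$ sends $\CJ$ onto $\C^d$, choose $g_1,\dots,g_d\in\CJ$ with $g_i(\la)=e_i$. Assembling these as the columns of a $d\times d$ matrix $G(z)=[\,g_1(z)\mid\cdots\mid g_d(z)\,]$ with entries in $\CR_k$, one has $G(\la)=I_d$, so that $\delta:=\det G\in\CR_k$ satisfies $\delta(\la)=1$. The adjugate identity $G\cdot\operatorname{adj}(G)=\delta\,I_d$ gives $\delta\,h=G\psi$ with $\psi:=\operatorname{adj}(G)\,h\in\CR_k^d$; reading this column-wise yields $\delta\,h=\sum_{i}\psi_i\,g_i$, where each $\psi_i\in\CR_k$. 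By the module property $\eta\cdot\CJ\ss\CJ$ for $\eta\in\CR_k$ (established at the start of this proof by approximating $\eta$ by polynomials), each $\psi_i g_i\in\CJ$, and hence $\delta\,h\in\CJ$. Note that $\delta$ and the $\psi_i$ lie in $\CR_k$ precisely because $\CR_k$ is a Banach algebra, hence closed under the finite sums and products defining determinants and cofactors; the division Lemma~\ref{lem-division} is not needed on this route.

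At this stage I would have two members of $\CJ$ built from $h$, namely $\delta\,h$ and $(z-\la)h$, and the proof reduces to showing that $\delta$ and $z-\la$ generate the whole algebra, i.e.\ that there are $a,b\in\CR_k$ with $a\delta+b(z-\la)=1$; indeed this at once gives $h=a(\delta h)+b((z-\la)h)\in\CJ$. For the coprimality I would argue via characters: every character $\chi$ of $\CR_k$ is determined by $\mu:=\chi(z)$ (polynomials being dense in $\CR_k$), and since the spectral radius of multiplication by $z$ equals $\lim_n\tilde k_n^{1/(2n)}=1$ by \eqref{eq-r1}, one has $\mu\in\ol{\D}$. If $\mu\neq\la$ then $\chi(z-\la)=\mu-\la\neq0$; if $\mu=\la$, then $\chi$ is evaluation at the point $\la\in\D$ (a bounded functional, since $\CR_k$ is a reproducing kernel space of functions analytic on $\D$) and $\chi(\delta)=\delta(\la)=1\neq0$. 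Thus no character annihilates both $\delta$ and $z-\la$, so the ideal they generate lies in no maximal ideal and therefore equals $\CR_k$.

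The main obstacle is this last Banach-algebra step: one must be certain that the maximal ideal space of $\CR_k$ is governed by evaluations at points of $\ol{\D}$, which rests on the density of polynomials together with $\lim k_n^{1/n}=1$. This is exactly the Gelfand-theoretic description already exploited in Remark~\ref{rem4_3}, so I expect the argument to go through cleanly once that identification is recorded.
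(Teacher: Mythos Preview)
Your proof is correct, and the first half (building the matrix $G$, the scalar $\delta=\det G$ with $\delta(\la)=1$, and concluding $\delta h\in\CJ$ via the adjugate identity) is exactly what the paper does. The divergence is only in the endgame. The paper dispenses with Gelfand theory entirely: it simply writes
\[
h=\delta h-(\delta-1)h,
\]
observes that $(\delta-1)h=\dfrac{\delta(z)-\delta(\la)}{z-\la}\cdot(z-\la)h$, and invokes the division Lemma~\ref{lem-division} to see that the quotient lies in $\CR_k$, hence $(\delta-1)h\in\CJ$. In other words, the paper exhibits an \emph{explicit} B\'ezout pair $a=1$, $b=-(\delta-1)/(z-\la)$ for your identity $a\delta+b(z-\la)=1$, rather than appealing to the maximal ideal space to guarantee its existence. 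Your route trades the division lemma for the character-theoretic description of the spectrum (which, as you note, is the same identification used in Remark~\ref{rem4_3}); the paper's route is more elementary and self-contained, needing only that $M_z-\la$ has closed range on $\CR_k$ for $\la\in\D$.
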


Indeed, assume that $h$ satisfies these assumptions.
Since $\CF(\la) = \C^d$, there exist functions $\vp_1, \ldots , \vp_d$ in $\CJ$ such that
$\vp_j(\la) = e_j$ (where $\{ e_j \}$ is the canonical base in $\C^d$).
Consider the $d\times d$ matrix-valued function
\[
\Phi := (\vp_1 | \cdots | \vp_d) \in \CR_k^{d\times d},
\qquad \textnormal{ and set } \qquad
\vp := \det \Phi \in \CR_k.
\]
Note that $\Phi \ga = \ga_1 \vp_1 + \cdots + \ga_d \vp_d \in \CJ$ for every $\ga\in \CR_k^d$.
Hence,
$\vp h = \Phi \Phi^{\textnormal{ad}} h \in \CJ$.
We also observe that $(\vp - 1)h$ belongs to $\CJ$.
Indeed, since $\vp(\la) = 1$, we have
\[
(\vp - 1)h = \dfrac{\vp(z) - \vp(\la)}{z-\la} \,(z-\la) h \in \CJ,
\]
because $(\vp(z) - \vp(\la))/(z-\la) \in \CR_k$
by Lemma~\ref{lem-division} and $(z-\la) h \in \CJ$.
Therefore,
\[
h = \vp h - (\vp - 1)h \in \CJ,
\]
which proves our claim.

To check that $\CM_z-\la$ is invertible in $\CR_k^d / \CJ$,
take an arbitrary element $f$ in $\CR_k^d$,
and let us  study the solutions of the equation
\[
(\CM_z-\la) [h]= [f]
\]
with respect to an unknown coclass $[h]\in \CR_k^d / \CJ$.
By the above Claim, there is no more than one solution.
On the other hand, if we set
\[
h(z) = (z-\la)^{-1}\, \big(f(z)-\Phi(z)f(\la)\big),
\]
then by Lemma~\ref{lem-division}, $h\in \CR_k^d$, so that
$[h]$ is a solution of the above equation.
Note that $\Phi(\la) = I$.
It follows that the above formula defines a bounded map $[f]\mapsto [h]$, which
proves that the inverse to $\CM_z-\la$ exists and is bounded on $\CR_k^d / \CJ$.
This completes the proof of the inclusion
$\si(\CM_z)\cap \D\subset \si(\CJ) \cap \D$.

To prove the opposite inclusion, take a point $\la$ in $\si(\CJ) \cap \D$ and
let us see that $\la$ belongs to $\si(\CM_z)$.
In other words, we wish to prove that $\CM_z - \la$ is not invertible in $\CR_k^d / \CJ$.

Since $\la \in \si(\CJ)$, the fiber $\CF(\la)$ is not all $\C^d$.
Hence, there exists a nonzero antilinear functional $\Psi$
on $\C^d$ such that $\Psi | \CF(\la) \equiv 0$.
It defines an  antilinear functional on $\CR_k^d$, given by
\[
\widehat{\Psi}(f) = \Psi(f(\la)).
\]
Note that $\widehat{\Psi} \ne 0$, but $\widehat{\Psi} | \CJ \equiv 0$.
Hence, we obtain the antilinear functional $\widetilde{\Psi}$
on the quotient $\CR_k^d / \CJ$, given by
\[
\widetilde{\Psi} : \CR_k^d / \CJ \to \C,
\qquad
\widetilde{\Psi}([f]) := \widehat{\Psi}(f).
\]
For every $f \in \CR_k^d$ we have
\[
\left\langle  (\CM_z - \la)^*\widetilde{\Psi}, [f] \right\rangle
=
\widetilde{\Psi}((\CM_z - \la)[f])
=
\widehat{\Psi}((z - \la)f) = 0,
\]
because $(z - \la)f(z)$ vanishes for $z=\la$.
Hence, $(\CM_z - \la)^* \widetilde{\Psi} = 0$.
Since $\widetilde{\Psi} \ne 0$, we get
that $(\CM_z - \la)$ is not invertible in $\CR_k^d / \CJ$,
as we wanted to prove.
\end{proof}

We remark that the above Claim is very close 
to Corollary~3.8 in the Richter's paper~\cite{Richter87}, which 
can be stated as follows: for any (reasonable) Banach algebra 
of analytic functions on $\D$, continuable to $\ol\D$, 
any its invariant subspace 
(which is the same as an ideal) 	
has index one. Richter only studies scalar-valued 
algebras $\CR_k$, and the above Claim can be seen 
as an extension of Richter's result to the case 
of a vector-valued space $\CR_k\otimes \CE$, where 
$\dim \CE<\infty$. The indices of invariant subspaces 
of vector-valued spaces of analytic functions have been 
studied by Carlsson in~\cite{Carlsson09}. 
In \cite[Section~10]{AlemHedmlmRicht2005}, 
one can find a review of the index phenomena, 
related to invariant subspaces of Bergman spaces. 

\begin{proof}[Proof of Theorem \ref{thm-spectr-zero-set}]
We conserve the notation of the above proof.
Fix any point $\la$ in $\D\setminus \si(T)$.
Then, as above, there exist functions
$\vp_j\in \CJ$ such that
$\vp_j(\la)=e_j$, $j=1,\dots d$. Define the $d\times d$ matrix function $\Phi(z)$
as above and put $\vp(z)=\det \Phi(z)$, then $\vp$ belongs to $\CR_k$.
Observe that $\vp \not \equiv 0$.
Notice that the fiber $\CF_\CJ(z)$ equals to $\C^d$ whenever
$\Phi(z)$ is invertible, that is, whenever $\vp(z)\ne 0$.
Hence $\si(\CJ)$ is contained in the zero set of $\vp$.
By Theorem~\ref{thm-si-CL-si-Mz},
$\si(T)\cap\D=\si(\CJ)\cap\D$,
and this implies the statement of the theorem.
\end{proof}

For the proof of Theorem~\ref{thm-conseq-Carleson} we need the following lemma.

\begin{lemma}
\label{lem-conseq-Carleson}
Assume the hypothesis of Theorem~\ref{thm-spectr-zero-set}
and also \eqref{eq sum kn bound C epsilon}. Then, there exists
a positive number $s$ such that
the functions in $\CR_k$
are H\"{o}lder continuous of order $s$ on $\ol{\D}$.
\end{lemma}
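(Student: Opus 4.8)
The plan is to observe that the conclusion depends only on the decay hypothesis \eqref{eq sum kn bound C epsilon} together with the definition of the norm on $\CR_k=\CH_{\tilde k}$ (where $\tilde k_n=1/k_n$), so that $\|g\|_{\CR_k}^2=\sum_n |g_n|^2/k_n$ for $g(z)=\sum_n g_n z^n\in\CR_k$; the remaining hypotheses of Theorem~\ref{thm-spectr-zero-set} (that $\CR_k$ is a Banach algebra, that $\FD$ is finite dimensional, and the similarity) play no role here. The key first step is to convert the two quadratic-summability facts at our disposal into a decay estimate for the tails of the \emph{absolute} coefficient sums. Indeed, by the Cauchy--Schwarz inequality and \eqref{eq sum kn bound C epsilon},
\[
\sum_{n\ge N}|g_n|
=\sum_{n\ge N}\frac{|g_n|}{\sqrt{k_n}}\,\sqrt{k_n}
\le \Big(\sum_{n\ge N}\frac{|g_n|^2}{k_n}\Big)^{1/2}\Big(\sum_{n\ge N}k_n\Big)^{1/2}
\le C^{1/2}\,\|g\|_{\CR_k}\,N^{-\ep/2}
\]
for every $N\ge 1$. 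Thus, writing $A:=C^{1/2}\|g\|_{\CR_k}$, the tail $S_N:=\sum_{n\ge N}|g_n|$ satisfies $S_N\le A\,N^{-\ep/2}$.

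Next I would pass from this coefficient decay to Hölder continuity by a purely elementary argument valid on all of $\ol\D$. For $z,w\in\ol\D$ one has $|z^n-w^n|\le\min(2,\,n|z-w|)$ (the second estimate because $z^n-w^n=(z-w)\sum_{j=0}^{n-1}z^jw^{n-1-j}$ is a sum of $n$ terms of modulus $\le 1$). Fixing $s:=\min(\ep/2,1/2)\in(0,1)$ and splitting the series $g(z)-g(w)=\sum_n g_n(z^n-w^n)$ at $N:=\lceil 1/|z-w|\rceil$, I would estimate
\[
|g(z)-g(w)|\le 2\sum_{n\ge N}|g_n|+|z-w|\sum_{n<N}n\,|g_n|.
\]
The first sum is controlled directly by the tail estimate, giving $2S_N\le 2A\,N^{-\ep/2}\lesssim |z-w|^{s}$. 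For the second sum I would use summation by parts, $\sum_{n=1}^{N}n|g_n|=\sum_{n=1}^N S_n - N S_{N+1}\le\sum_{n=1}^N S_n\le A\sum_{n=1}^N n^{-\ep/2}$, which is $\lesssim N^{1-s}$ in all three cases $\ep/2<1$, $\ep/2=1$, $\ep/2>1$ (using $\sum_{n\le N}n^{-\ep/2}\asymp N^{1-\ep/2}$, $\asymp\log N$, or $\asymp 1$, respectively). Hence the second sum is $\lesssim |z-w|\,N^{1-s}\lesssim|z-w|^{s}$, and combining the two pieces yields $|g(z)-g(w)|\lesssim \|g\|_{\CR_k}\,|z-w|^{s}$, as desired.

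The genuinely substantive step is the Cauchy--Schwarz combination that turns the two $\ell^2$-type bounds into the tail decay $S_N\le A N^{-\ep/2}$; everything after that is bookkeeping. The only mildly delicate point is balancing the two regimes in the split, i.e.\ checking that the summation-by-parts estimate $\sum_{n<N}n|g_n|\lesssim N^{1-s}$ matches the chosen exponent $s$ across the cases $\ep\lessgtr 2$, which is why I take $s=\min(\ep/2,1/2)$ rather than $s=\ep/2$. I do not expect a real obstacle: pleasantly, because the inequality $|z^n-w^n|\le\min(2,n|z-w|)$ holds for all $z,w\in\ol\D$ and not merely on $\T$, the argument produces Hölder continuity directly on the \emph{closed} disc, so no appeal to a boundary-to-interior extension theorem of Hardy--Littlewood type is needed.
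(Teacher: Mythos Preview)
Your argument is correct, and it takes a genuinely different route from the paper's own proof. The paper applies Cauchy--Schwarz not to the tails of $\sum|g_n|$ but to the difference $f(1)-f(e^{i\theta})=\sum_n f_n(1-e^{in\theta})$ itself, obtaining the bound $\|f\|_{\CR_k}\bigl(\sum_n k_n|1-e^{in\theta}|^2\bigr)^{1/2}$; it then splits this last sum at $n\approx|\theta|^{s-1}$ and uses \eqref{eq sum kn bound C epsilon} on the tail. This gives H\"older continuity only on $\T$, so the paper then invokes the Hardy--Littlewood extension theorem \cite{HL32} to pass to $\ol\D$. Your approach trades that extension step for an elementary summation-by-parts estimate: by first extracting the $\ell^1$ tail decay $\sum_{n\ge N}|g_n|\lesssim N^{-\ep/2}$ and using $|z^n-w^n|\le\min(2,n|z-w|)$ on all of $\ol\D$, you obtain the H\"older bound directly on the closed disc, with no appeal to \cite{HL32}. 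A small bonus is that your exponent $s=\min(\ep/2,1/2)$ is sharper than the paper's, which effectively needs $s\le \ep/(2+\ep)$; of course the lemma only asserts existence of some $s>0$, so both suffice.
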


\begin{proof}
It is easy to see that \eqref{eq sum kn bound C epsilon} implies that
for a sufficiently small $s\in (0,1)$,
\begin{equation}\label{eq-for-Lips-cond}
\sup_{0<t<1} t^{-s} \left(\sum_{t^{s-1}}^{\infty} k_n\right)^{1/2} < \infty.
\end{equation}
Fix such $s$, and let $f$ be a function in $\CR_k$.
To prove that $f$ is H\"{o}lder continuous of order $s$ on $\ol{\D}$,
it is enough to show that $f$ is H\"{o}lder continuous of order $s$ in $\T$
(see \cite{HL32}).
By a rotation argument, we just need to prove that
\[
\sup_{\substack{\theta \neq 0 \\  \theta \in [-\pi, \pi]}}
\abs{\theta}^{-s}  \abs{f(1) - f(e^{i\theta})} < \infty.
\]
By the Cauchy-Schwarz inequality, it is enough to prove that
\begin{equation}
\label{eq-conseq-Carl-2}
\sup_{\substack{\theta \neq 0 \\  \theta \in [-\pi,\pi]}}
\abs{\theta}^{-s}
\left(
\sum_{n=0}^{\infty} k_n \abs{1-e^{in\theta}}^2
\right)^{1/2}
< \infty.
\end{equation}
Note that $\abs{1-e^{in\theta}}^2 \le n^2 \theta^2$,
hence $\abs{1-e^{in\theta}}^2 \le \abs{\theta}^{2s}$ if $n \le \abs{\theta}^{s-1}$.
Therefore
\[
\abs{\theta}^{-s}
\left(
\sum_{n=0}^{\infty} k_n \abs{1-e^{in\theta}}^2
\right)^{1/2}
\le
\left(
\sum_{n=0}^{\abs{\theta}^{s-1}} k_n
\right)^{1/2}
+ 2 \abs{\theta}^{-s}
\left(
\sum_{\abs{\theta}^{s-1}}^{\infty} k_n
\right)^{1/2},
\]
which is uniformly bounded because 
$\sum k_n$ converges and \eqref{eq-for-Lips-cond}
holds. Hence \eqref{eq-conseq-Carl-2} follows and the statement is proved.
\end{proof}

\begin{proof}[Proof of Theorem~\ref{thm-conseq-Carleson}]
By Theorem~\ref{thm-spectr-zero-set}, we have that
$\si(T) \cap \D$ is contained in the zero set of a
non-zero function $f$ in $\CR_k$.
By Lemma~\ref{lem-conseq-Carleson}, $f$ is H\"{o}lder continuous of order $s$ for
some $s>0$.
Note that $E$ is a set of uniqueness for $f$. Hence the statement follows
using \cite[Theorem~1]{Car52}.
\end{proof}

We conjecture that the statements of
Theorems~\ref{thm-conseq-Carleson} and~\ref{thm-spectr-zero-set} are valid
for the whole spectrum $\si(T)$. Some of our arguments do not apply and should
be changed in order to prove it. 

\section{Ergodic properties of $a$-contractions}
\label{section Consequences for ergodic theory}

In this section we focus only on functions $\al$ of the form
\begin{equation}\label{eq alpha power a}
\al(t) = (1-t)^a
\end{equation}
for some $a>0$.
Recall that if $(1-t)^a(T^*,T) \ge 0$ for some $T \in L(H)$,
then we say that $T$ is an $a$-contraction.
Now
\begin{equation}
\label{k-a}
k(t)= (1-t)^{-a} = \sum_{n=0}^{\infty} k^a(n) t^n \qquad (\abs{t}<1).
\end{equation}

Observe that $k^a(n)>0$ for all $n\ge 0$.
It follows that  $\al(t)=(1-t)^a$ satisfies Hypotheses~\ref{hypo-al}.
Moreover, here we are in the critical case.

If $0<a\le 1$, then $\al_n\le 0$ for $n>0$, so that in this case we can apply
Theorem~\ref{thmBCHMc}
to obtain a model for $a$-contractions.
This was singled out as an important particular case in~\cite{CH18}.
With the help of this model, 
we will derive here some ergodic properties of $a$-contractions for these values
of $a$. We refer to the book \cite{GP11} for a treatment of ergodic theory in the context
of the theory of linear operators.

Notice that
\[
k^a(n) = (-1)^n {a \choose n} =
\begin{cases}
\dfrac{a(a+1)\cdots (a+n-1)}{n!}  & \tn{ if } n \ge 1\\
1 & \tn{ if } n = 0
\end{cases}.
\]
These numbers are called \emph{Ces\`{a}ro numbers}.
See \cite[Volume I, p. 77]{Zygmund}.
We will need the following well-known facts about
their asymptotic behavior.

\begin{prop}\label{prop properties of Cesaro numbers}
If $a\in \C\setminus\{0,-1,-2,\dots\}$, then
\[
k^{a}(n)=\frac{\Gamma(n+a)}{\Gamma(a)\Gamma(n+1)}
={n+a-1\choose a-1}\qquad \forall n \ge 0,
\]
where $\Gamma$ is Euler's Gamma function. Therefore
\begin{equation}\label{asymp}
k^{a}(n)=\frac{n^{a-1}}{\Gamma(a)}(1+O(1/n)) \qquad \tn{ as } n\to\infty.
\end{equation}
Moreover, if $0<a \le 1$, then
\[
\frac{(n+1)^{a -1}}{\Gamma(a)}\leq k^{a}(n)
\leq \frac{n^{a -1}}{\Gamma(a)} \qquad \forall n \ge 1.
\]
\end{prop}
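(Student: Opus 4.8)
The plan is to first obtain the closed form, then read off the asymptotics from it, and finally establish the two–sided bound via convexity of the Gamma function.

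First I would expand $(1-t)^{-a}$ by the generalized binomial series, which gives $k^a(n)=(-1)^n\binom{-a}{n}$. Writing out the falling factorial in $\binom{-a}{n}$ and pulling out the sign yields the product form $k^a(n)=a(a+1)\cdots(a+n-1)/n!$ for $n\ge 1$ (and $k^a(0)=1$). The functional equation $\Gamma(z+1)=z\Gamma(z)$, applied $n$ times, turns the numerator into $\Gamma(n+a)/\Gamma(a)$, and since $n!=\Gamma(n+1)$ this gives $k^a(n)=\Gamma(n+a)/(\Gamma(a)\Gamma(n+1))$. The middle expression $\binom{n+a-1}{a-1}$ is then just the definition of the generalized binomial coefficient with these Gamma values, so the first displayed identity follows. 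This part is routine and poses no difficulty.

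For the asymptotic expansion, I would invoke the standard ratio estimate $\Gamma(n+a)/\Gamma(n+1)=n^{a-1}(1+O(1/n))$, which is a direct consequence of Stirling's formula. Dividing by $\Gamma(a)$ then gives $k^a(n)=n^{a-1}\Gamma(a)^{-1}(1+O(1/n))$, i.e.\ the second display. Again there is no real obstacle here, only an appeal to the well-known asymptotics of ratios of Gamma functions.

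The main point is the two–sided bound for $0<a\le 1$, which I would deduce from the log-convexity of $\Gamma$ on $(0,\infty)$. For the upper bound, write $n+a=(1-a)n+a(n+1)$ and apply log-convexity to get $\Gamma(n+a)\le \Gamma(n)^{1-a}\Gamma(n+1)^{a}$; using $\Gamma(n+1)=n\Gamma(n)$ this collapses to $\Gamma(n+a)\le \Gamma(n)\,n^{a}$, whence $\Gamma(n+a)/\Gamma(n+1)\le n^{a-1}$, and dividing by $\Gamma(a)$ gives the upper estimate. For the lower bound, write instead $n+1=a(n+a)+(1-a)(n+1+a)$, apply log-convexity, and use $\Gamma(n+1+a)=(n+a)\Gamma(n+a)$ to obtain $\Gamma(n+1)\le \Gamma(n+a)(n+a)^{1-a}$, i.e.\ $\Gamma(n+a)/\Gamma(n+1)\ge (n+a)^{a-1}$; since $a-1\le 0$ and $n+a\le n+1$, this is at least $(n+1)^{a-1}$, and dividing by $\Gamma(a)$ yields the stated lower bound. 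I expect this last step to be the crux: the only subtlety is arranging the convex combinations correctly and tracking the direction of the inequalities, which reverses because the exponent $a-1$ is nonpositive. Alternatively, the entire two–sided estimate can simply be cited from Gautschi's inequality.
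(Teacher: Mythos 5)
Your proof is correct, but it is far more self-contained than the paper's, whose entire proof is citation: Zygmund for the closed form and the asymptotics \eqref{asymp}, and Gautschi's inequality for the two-sided bound --- so your closing remark (``simply cite Gautschi'') is exactly the paper's route. Your derivation of the identity via the binomial series and repeated use of $\Gamma(z+1)=z\Gamma(z)$, and of \eqref{asymp} via the Stirling ratio estimate $\Gamma(n+a)/\Gamma(n+1)=n^{a-1}\left(1+O(1/n)\right)$ (which does hold for all complex $a$ off the poles, as the statement requires), is routine and sound. The substantive difference is the two-sided bound: your log-convexity argument is in effect a proof of Gautschi's inequality itself, in the style of Wendel's classical argument, rather than an appeal to it. I checked the details: the convex combinations $n+a=(1-a)n+a(n+1)$ and $n+1=a(n+a)+(1-a)(n+1+a)$ are correct with weights in $[0,1]$ for $0<a\le 1$; the reductions via $\Gamma(n+1)=n\Gamma(n)$ and $\Gamma(n+1+a)=(n+a)\Gamma(n+a)$ yield $\Gamma(n+a)/\Gamma(n+1)\le n^{a-1}$ and $\Gamma(n+a)/\Gamma(n+1)\ge (n+a)^{a-1}$ respectively; and your final step $(n+a)^{a-1}\ge (n+1)^{a-1}$ correctly uses that $x\mapsto x^{a-1}$ is nonincreasing when $a\le 1$. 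Note that the restriction $n\ge 1$ in the statement is needed precisely where you invoke $\Gamma(n)$ in the upper bound. What your route buys is a complete elementary proof resting only on log-convexity of $\Gamma$; what the paper's route buys is brevity.
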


\begin{proof}
See \cite[Volume I, p. 77, Equation (1.18)]{Zygmund} and \cite[Equation (1)]{ET}. 
The last inequality follows from the Gautschi inequality (see \cite[Equation (7)]{Gau59}).
\end{proof}

Any contraction $T$ on $H$ is an $a$-contraction 
for any $a\in (0,1)$. Indeed, in this case, $\al_n\le 0$ for all $n\ge 1$. 
Hence for any $x\in H$, 
$\sum_{n\ge 1} \al_n \|T^nx\|^2\ge (\sum_{n\ge 1} \al_n)\|x\|^2=-\|x\|^2$, 
which implies that $\al(T^*, T)\ge 0$.

Recall that in order to emphasize the dependence on the exponent
$a$ in \eqref{eq alpha power a},
we denote the space $\CH_k$ by $\CH_a$
(see Notation~\ref{Notation Bs}), and
use the notation $B_a$ and $F_a$ (these two operators
act on $\CH_a$).
In the same way,
when $T \in \pweak{\al}$ (that is, when $T$ is an $a$-contraction),
we will write $T \in \pweak{a}$,
and instead of $\weak{\al}$ we will use the notation $\weak{a}$.

The \emph{weighted space of Bergman-Dirichlet type} $\mathcal{D}_a$, where $a$
is a real parameter,
consists of all the analytic functions $f$ in $\D$ with finite norm
\[
\norm{f}_{\mathcal{D}_s}
:=
\left(
\sum_{n=0}^{\infty} (n+1)^a \abs{f_n}^2
\right)^{1/2}.
\]
In fact, it is a Bergman-type space if $a<0$, and
is a Dirichlet-type space if $a>0$. For $a=0$, we get the Hardy space.

Theorem~\ref{thmBCHMc} yields that for $0<a\le 1$, any $a$-contraction is modelable
as a part of an operator $(B_a\otimes I_\CE)\oplus S$, where $S$ is an isometry.
Recall that the adjoint to the operator $B_a=B_k$ on
$\CH_k$ is the operator $M_zg(z)=zg(z)$ on $\CR_k$, which is a space
with the weighted norm
\[
\|g\|^2_{\CR_k}=\sum_{n=0}^\infty k_n^{-1}|g_n|^2.
\]
Hence the characterization of invariant subspaces of $M_z$ on $\CR_k$
becomes important. In many cases, this question is related to the description of what
is called inner functions in $\CR_k$.
Since $k_n \asymp (n+1)^{a-1}$, 
the norm in $\CR_k$ is equivalent to the norm in $\CD_{-a+1}$, 
which is a Dirichlet-type space.
One can find results in this direction in the thesis of 
Schillo \cite{Schillo-tesis}, 
in \cite{PauPelaez2011} by Pau and Pel\'{a}ez,
and in the papers~\cite{Seco-Dirichlet-inner-2019}
and~\cite{BeneteauSeco18} by Seco and coauthors; see also references therein.

As a consequence of Theorem~\ref{thm characterization backward in Calpha}
we obtain the following result.

\begin{thm}
\label{thm B_s a-contraction}
Let $a$ and $s$ be positive numbers. Then the following is true.
\begin{itemize}
\item[\tn{(i)}] 
$B_s \in \weak{a}$.
\item[\tn{(ii)}] 
$B_s$ is an $a$-contraction if and only if $a \le s$.
\end{itemize}
\end{thm}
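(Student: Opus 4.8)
The plan is to derive both parts from Theorem~\ref{thm characterization backward in Calpha}, applied with the weight sequence $\ka_n = k^s(n)$ (so that $\CH_\ka = \CH_s$ and $B_\ka = B_s$) and with $\al(t) = (1-t)^a$, whose coefficients are $\al_n = k^{-a}(n)$. The theorem requires $B_s$ to be bounded, which I would verify at the outset: by Proposition~\ref{prop properties of Cesaro numbers} one has $k^s(n)/k^s(n+1) = (n+1)/(n+s)$, and this quotient is bounded (by $\max(1,1/s)$), so \eqref{eq backward bdd} holds and the theorem applies.

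For (i), Theorem~\ref{thm characterization backward in Calpha}(i) reduces $B_s \in \weak{a}$ to the estimate $\sup_m \ga_m/k^s(m) < \infty$, where $\ga_m = \sum_{j=0}^m |\al_j|\,k^s(m-j)$. The key input is the decay bound $|\al_j| \lesssim (j+1)^{-a-1}$, valid for every $a>0$: for non-integer $a$ it follows from $|\al_j| = |k^{-a}(j)|$, the formula $k^{-a}(j) = \Gamma(j-a)/(\Gamma(-a)\Gamma(j+1))$, and the ratio asymptotics $\Gamma(j-a)/\Gamma(j+1)\sim j^{-a-1}$ behind Proposition~\ref{prop properties of Cesaro numbers}, while for integer $a$ the coefficients $\al_j$ vanish once $j>a$, making the bound trivial. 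Together with $k^s(n)\asymp (n+1)^{s-1}$, this turns the problem into showing
\[
\sum_{j=0}^m (j+1)^{-a-1}(m-j+1)^{s-1} \lesssim m^{s-1}.
\]
I would split this sum at $j=m/2$. On $j\le m/2$ one has $(m-j+1)^{s-1}\asymp m^{s-1}$, so the contribution is $\lesssim m^{s-1}\sum_{j\ge 0}(j+1)^{-a-1}$, the series being finite because $a>0$; on $j>m/2$ one has $(j+1)^{-a-1}\asymp m^{-a-1}$ and $\sum_{m/2<j\le m}(m-j+1)^{s-1}\asymp m^s$, giving a contribution $\asymp m^{s-a-1}$, which is negligible compared with $m^{s-1}$. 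This establishes (i) for all positive $a$ and $s$.

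For (ii), with $B_s\in\weak{a}$ already in hand, Theorem~\ref{thm characterization backward in Calpha}(ii) says that $B_s$ is an $a$-contraction exactly when all Taylor coefficients of $\al(t)\ka(t) = (1-t)^a(1-t)^{-s} = (1-t)^{a-s}$ are non-negative. I would then read off these signs directly from the exponent $a-s$: if $a<s$ the coefficients equal $k^{s-a}(n)>0$; if $a=s$ they are $1,0,0,\dots$; and if $a>s$ the coefficient of $t$ is $-(a-s)<0$. Hence non-negativity holds precisely when $a\le s$, which is (ii). The only genuinely technical step is the convolution estimate of part (i), where the decisive point is that $a>0$ forces $|\al_n|$ to decay like $n^{-a-1}$ and thus to be summable; part (ii) is then essentially sign bookkeeping once the product $\al\ka$ is recognised as $(1-t)^{a-s}$.
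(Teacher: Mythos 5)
Your proof is correct, and while it follows the same skeleton as the paper---both parts are derived from Theorem~\ref{thm characterization backward in Calpha} with $\ka(t)=(1-t)^{-s}$ and $\al(t)=(1-t)^a$, and your part (ii) coincides with the paper's (read the signs of the coefficients of $\al(t)\ka(t)=(1-t)^{a-s}$, non-negative exactly when $a\le s$)---your treatment of part (i) takes a genuinely different route. The paper never estimates the convolution $\ga=\be\ka$ termwise: it exploits the eventual sign constancy of $\al_n=(-1)^n\binom{a}{n}$ to write $\be(t)=p(t)\pm\al(t)$ with $p$ a polynomial with positive coefficients, so that $\ga(t)=p(t)\ka(t)\pm(1-t)^{a-s}$, and then bounds the two pieces separately: the polynomial part satisfies $|(p\ka)_m|\lesssim\ka_m$ because only finitely many shifts $\ka_{m-j}$, $j\le N$, enter, while the coefficients of $(1-t)^{a-s}$ are $\asymp m^{s-a-1}\lesssim m^{s-1}\asymp\ka_m$. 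You instead establish the decay $|\al_j|\lesssim (j+1)^{-a-1}$ (with the integer-$a$ case handled separately, where the series terminates) and run the classical split of the convolution sum at $j=m/2$; both halves of your estimate check out. What each approach buys: the paper's is shorter and reuses the closed form $\al\ka=(1-t)^{a-s}$ already in part (i), avoiding any split-sum work; yours is more elementary and robust, in that it needs only summability of $|\al_n|$ rather than the sign pattern of the binomial coefficients, and it explicitly covers integer $a$ (and, implicitly, the case where $a-s$ is a non-negative integer, so $(1-t)^{a-s}$ is a polynomial---a degenerate case the paper's asymptotic $|\wh{\ga}_m|\asymp m^{-(a-s+1)}$ quietly glosses over, though harmlessly). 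Your preliminary verification that $B_s$ is bounded, via $k^s(n)/k^s(n+1)=(n+1)/(n+s)\le\max(1,1/s)$, is a point of care the paper leaves implicit.
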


\begin{proof}
Using the notation of Theorem~\ref{thm characterization backward in Calpha}
we have $\ka(t) = (1-t)^{-s}$ and $\al(t) = (1-t)^a$.
Hence $\be(t)=p(t) \pm \al(t)$, where $p(t)$ is a polynomial, say
$p(t) = p_0 + p_1 t + \cdots + p_N t^N$, with all the coefficients $p_j$ positive.
Then
\[
\ga(t) = (p(t) \pm \al(t)) \ka(t) = p(t) \ka(t) \pm (1-t)^{a-s} =: \wt{\ga}(t) + \wh{\ga}(t).
\]
To prove (i), it is enough to show that
\[
\sup_{m\ge 0} \dfrac{\abs{\wt{\ga}_m}}{\ka_m} < \infty,
\quad \tn{ and } \quad
\sup_{m\ge 0} \dfrac{\abs{\wh{\ga}_m}}{\ka_m} < \infty.
\]
On one hand, for $m\ge N$ we have
\[
\abs{\wt{\ga}_m} \le (p_0 + \cdots + p_N) \cdot \max\{ \ka_m, \ldots , \ka_{m-N} \}
\lesssim \ka_m.
\]
On the other hand,
\[
\abs{\wh{\ga}_m} \asymp \dfrac{1}{m^{a-s+1}} \le \dfrac{1}{m^{-s+1}} \asymp \ka_m.
\]
Therefore (i) follows.
Observe that (ii) is an immediate consequence of
Theorem~\ref{thm characterization backward in Calpha}~(ii),
since $\al(t)\ka(t) = (1-t)^{a-s}$.
\end{proof}

Note that it is immediate that
\begin{equation}\label{eq norm Bs}
\norm{B_s^m}^2
=
\sup_{n \ge 0} \dfrac{\ka_n}{\ka_{n+m}}
= \begin{cases}
1 & \text{if } 1 \le s \\
1/\ka_m & \text{if } 0 < s < 1,
\end{cases}
\end{equation}
and
\begin{equation}\label{eq norm Fs}
\norm{F_s^m}^2
=
\sup_{n \ge 0} \dfrac{\ka_{n+m}}{\ka_n}
= \begin{cases}
\ka_m & \text{if } 1 \le s \\
1 & \text{if } 0 < s < 1,
\end{cases}
\end{equation}
for every $m \ge 0$.
Therefore
\begin{equation}\label{eq asymp Bs and Fs}
\norm{B_s^m}^2 \asymp (m+1)^{\max \{ 1-s, 0 \}}
\quad \tn{ and } \quad
\norm{F_s^m}^2 \asymp (m+1)^{\max \{ s-1, 0 \}}.
\end{equation}

As an easy consequence we obtain the following example,
which shows two relevant facts: 
1) there are $a$-contractions that are not similar to contractions, and
2) the importance of considering the strong operator topology in
the convergence of $\sum \al_n T^{*n}T^n$.

\begin{exa}
\label{exa B_s a-contr not similar contr}
Taking $a=s\in (0,1)$ in Theorem~\ref{thm B_s a-contraction}, we get that
$B_{a}$ is an $a$-contraction. It is not similar to a contraction,
since it is not power bounded. Moreover, if $a=s\le 1/2$, then 
\[
\sum_{n=0}^{\infty} \abs{\al_n} \, \| {B_{a}^n} \|^2
\asymp
\sum_{n=0}^{\infty} (n+1)^{-1-a} \,  (n+1)^{1-a} = \sum_{n=0}^{\infty} (n+1)^{-2a} = \infty,
\]
and therefore the series $\sum \al_n B_{a}^{*n} B_{a}^n$ does not converge in
the uniform operator topology in $L(H)$.
Note that, obviously, the model of the $a$-contraction $B_{a}$ is itself.
\end{exa}

Let us study now some ergodic properties of $a$-contractions.

\begin{defi}\label{defi Cesaro means and Cesaro bounded}
Let $a \ge 0$. 
For any bounded linear operator $T$ on a Banach space $X$, 
we call the operators $\{ M^a_T(n) \}_{n \ge 0}$ given by
\[
M^a_T(n) := \dfrac{1}{k^{a+1}(n)} \sum_{j=0}^{n} k^a(n-j) T^j,
\]
the \emph{Ces\`aro means of order $a$ of $T$}. 
When this family of operators is uniformly bounded, that is,
\[
\sup_{n \ge 0} \norm{M^a_T(n)} < \infty,
\]
we say that $T$ is \emph{$(C,a)$-bounded}.
\end{defi}

\begin{rems}
\quad
\begin{enumerate}[\rm (i)]
\item
Note that $\sum_{j=0}^{n} k^a(j) = k^{a+1}(n)$ for any $a\ge 0$.
Also, if $a\ge 0$, then $k^a(j) \ge 0$ for every $j \ge 0$.
\item
If $a=0$, then $M_T^0(n) = T^n$. Hence $(C,0)$-boundedness is just power boundedness.
\item
If $a=1$, then $M_T^1(n) = (n+1)^{-1} \sum_{j=0}^{n} T^j$.
Hence $(C,1)$-boundedness is just Ces\`aro boundedness.
\item
It is well-known that if $0 \le a < b$, 
then $(C,a)$-boundedness implies $(C,b)$-boundedness.
The converse is not true in general. For example,
the Assani matrix
\[
T
=
\begin{pmatrix}
-1 & 2 \\
0 & -1
\end{pmatrix}
\]
is $(C,1)$-bounded, but since
\[
T^n
=
\begin{pmatrix}
(-1)^n & (-1)^{n+1}2n \\
0 & (-1)^n
\end{pmatrix}
\]
it is not power bounded (see \cite[Section 4.7]{Emi85}).
\end{enumerate}
\end{rems}

\begin{defi}
If the sequence of operators $\{ M^a_T(n) \}_{n \ge 0}$ given in 
Definition \ref{defi Cesaro means and Cesaro bounded}
converges in the strong operator topology, 
we say that $T$ is \emph{$(C,a)$-mean ergodic}.

If $T$ is $(C,1)$-mean ergodic, it is conventional
just to say that $T$ is \emph{mean ergodic}.
\end{defi}

There is a well established literature on $(C,a)$-bounded operators,
which explores quite a number of properties and their interplays.
Properties, characterization through functional calculus and 
ergodic results for $(C,a)$-bounded operators 
can be found in \cite{ALMV16, AS16, De00, Eddari1, Eddari2, Emi85, LSAS}
and references therein.
The connection of these operators and ergodicity dates back to the fourties of last century,
see \cite{Coh40} and \cite{Hil45}.
In the latter paper, E. Hille studies $(C,a)$-mean ergodicity in terms of Abel convergence
(that is, via the resolvent operator).
As application, the well known mean ergodic von Neumann's theorem for unitary groups
on Hilbert spaces is extended to $(C,a)$-mean ergodicity 
for every $a>0$ \cite[p. 255]{Hil45}.
Also, the $(C,a)$-ergodicity on $L_1(0,1)$ of fractional (Riemann-Liouville) integrals
is elucidated in \cite[Theorem 11]{Hil45}.
In particular, if $V$ is the Volterra operator then $T_V:=I-V$, as operator on $L_1(0,1)$,
is not power-bounded, and it is $(C,a)$-mean ergodic if and only if 
$a>1/2$ \cite[Theorem 11]{Hil45}.
This result can be extended to $T_V$ acting on $L_p(0,1)$, $1<p<\infty$,
using estimates given in \cite{MSZ05}, see \cite[Section 10]{AGL19}.

In \cite{LH15}, Luo and Hou introduced a new notion of boundedness: 
a bounded linear operator $T$ on a Banach space $X$ is
said to be \emph{absolutely Ces\`aro bounded} if
\[
\sup_{n \ge 0} \dfrac{1}{n+1} \sum_{j=0}^{n} \norm{T^j x} \lesssim \norm{x}
\]
for every $x \in X$.
In \cite{BBMP20}, the authors study the ergodic behaviour for this class of operators.
The above definition has been
extended recently by Abadias and Bonilla in \cite{AB19}:
$T$ is said to be \emph{absolutely $(C,a)$-Ces\`aro bounded}
for some $a>0$ if
\[
\sup_{n \ge 0} \dfrac{1}{k^{a+1}(n)} \sum_{j=0}^{n} k^a (n-j) \norm{T^j x} 
\lesssim \norm{x}
\]
for every $x \in X$. Note that for $a=1$  the definition of Luo and Hou is recovered.

\begin{rem}
It is well-known that the following implications hold:
\[
\begin{split}
&\text{Power bounded }\Rightarrow\text{ Absolutely }(C, a)\text{-bounded } \\
&\Rightarrow\ (C, a)\text{-bounded  }\Rightarrow\ \|T^n\|=O(n^a).
\end{split}
\]
The first two implications are straightforward.
For the sake of completeness, we give a proof of the last one.
Suppose $T$ is $(C,a)$-bounded for some $a \ge 0$.
We denote by $[a]$ the integer part of $a$.
Then, for $n>[a]$, we have
\[
\begin{split}
\norm{T^n}
&=
\bigg\|
\sum_{j=0}^n k^{-a}(j) \sum_{m=0}^{n-j} k^{a}(n-j-m)T^m
\bigg\| \\
&\lesssim
\sum_{j=0}^{n} \abs{k^{-a}(j)} k^{a+1}(n-j) \\
&=
\sum_{j=0}^{[a]}(-1)^j k^{-a}(j)k^{a+1}(n-j)
+\sum_{j=[a]+1}^n (-1)^{[a]+1}k^{-a}(j)k^{a+1}(n-j) \\
&=
\sum_{j=0}^{[a]}\left((-1)^j+(-1)^{[a]}\right) k^{-a}(j)k^{a+1}(n-j)
+(-1)^{[a]+1}  \sum_{j=0}^{n} k^{-a}(j)k^{a+1}(n-j) \\
&\lesssim
\sum_{j=0}^{[a]} \abs{k^{-a}(j)} k^{a+1}(n-j) + k^1(n)
\lesssim
k^{a+1}(n)
\asymp
(n+1)^a\, .
\end{split}
\]
\end{rem}

The following extension of the above definitions will be important for us.
\begin{defi}\label{defi abs Cap bdd}
Let $a>0$ and $p\geq 1$. 
We say that a bounded linear operator $T$ on a Banach space $X$ is
\emph{$(C, a, p)$-bounded} if
\[
\sup_{n \ge 0} \frac{1}{k^{a+1}(n)}\sum _{j=0}^n  k^{a}(n-j) \|T^jx\|^p  \lesssim \|x\|^p,
\]
for all $x\in X$.
\end{defi}
Note that for $p=1$ this definition is just the absolute $(C,a)$-boundedness.
The case $a=1$ has been recently considered in \cite{CCEL20}.
We will use
the term \emph{quadratically $(C,a)$-bounded} instead of $(C,a,2)$-bounded.

Using the asymptotics $k^a(n) \asymp (n+1)^{a-1}$ given in \eqref{asymp}, it is easy to see
that $T$ is $(C,a,p)$-bounded if and only if
\begin{equation}\label{eq char of Cap-bdd}
\sup_{n \ge 0} \frac{1}{(n+1)^a}\sum _{j=0}^n  (n+1-j)^{a-1} \norm{T^j x}^p  \lesssim \norm{x}^p
\qquad (\forall x \in X).
\end{equation}

The  following observation
will be essential for the proof of Theorem \ref{thm Ca to Cesaro bounded}.
\begin{lemma}\label{lemma inherit by a part a tensor IR}
The following holds.
\begin{itemize}
\item[\tn{(i)}]
If $T$ is $(C, a, p)$-bounded, then any part of $T$ is also $(C, a, p)$-bounded.
\item[\tn{(ii)}]
If $T_1$ and $T_2$ are $(C, a, p)$-bounded,
then any direct sum $T_1 \dotplus T_2$ is also $(C, a, p)$-bounded.
\item[\tn{(iii)}]
Let $T$ be a bounded linear operator on a Hilbert space. 
If $T$ is quadratically $(C, a)$-bounded,
then $T\otimes I_{\CE}$ is also quadratically $(C, a)$-bounded,
where $I_\CE$ is the identity operator on some Hilbert space $\CE$.
\end{itemize}
\end{lemma}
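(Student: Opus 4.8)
All three parts rest on the same elementary principle, so I would organise the argument around it: the quantity controlling $(C,a,p)$-boundedness is assembled from the non-negative weights $k^a(n-j)$ (recall $k^a(j)\ge 0$ when $a\ge 0$) and from the powers $\|T^j x\|^p$, and hence behaves additively across direct summands and monotonically.

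For (i) I would simply restrict. If $M$ is an invariant subspace and $T|_M$ the corresponding part, then for $x\in M$ one has $(T|_M)^j x = T^j x$ with the norm of $x$ unchanged, so the defining expression
\[
\frac{1}{k^{a+1}(n)}\sum_{j=0}^n k^a(n-j)\,\|(T|_M)^j x\|^p
\]
is literally the one for $T$ evaluated at the same vectors. Taking the supremum over $n$ and over $x\in M$ therefore produces a quantity no larger than the corresponding supremum for $T$, and (i) follows with the same constant and no computation.

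For (ii) I would write the space as $X_1\dotplus X_2$, let $P_1,P_2$ be the bounded projections onto the summands, and set $x_i=P_i x$. Since $(T_1\dotplus T_2)^j x = T_1^j x_1 + T_2^j x_2$ and $p\ge 1$,
\[
\|(T_1\dotplus T_2)^j x\|^p \le \bigl(\|T_1^j x_1\|+\|T_2^j x_2\|\bigr)^p \le 2^{p-1}\bigl(\|T_1^j x_1\|^p+\|T_2^j x_2\|^p\bigr).
\]
Inserting this into the defining sum, splitting it into the two pieces indexed by $x_1$ and $x_2$, and applying the $(C,a,p)$-boundedness of $T_1$ and of $T_2$ separately bounds each piece by $\lesssim k^{a+1}(n)\|x_i\|^p$; finally $\|x_1\|^p+\|x_2\|^p\le(\|P_1\|^p+\|P_2\|^p)\|x\|^p$ closes the estimate. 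The one thing to monitor is that every constant ($2^{p-1}$ and the projection norms) is independent of $n$, which it is.

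For (iii) I would pass to the Hilbert space setting with $p=2$, where the additivity becomes an exact identity. Fixing an orthonormal basis $\{e_i\}$ of $\CE$ and identifying $H\otimes\CE$ with the orthogonal sum of the copies $H\otimes e_i$, the operator $T\otimes I_\CE$ acts as $T$ on each copy; writing $x=\sum_i x_i\otimes e_i$ gives $\|x\|^2=\sum_i\|x_i\|^2$ and, by Pythagoras, $\|(T\otimes I_\CE)^j x\|^2=\sum_i\|T^j x_i\|^2$. Hence
\[
\sum_{j=0}^n k^a(n-j)\,\|(T\otimes I_\CE)^j x\|^2 = \sum_i \sum_{j=0}^n k^a(n-j)\,\|T^j x_i\|^2,
\]
and applying the quadratic $(C,a)$-boundedness of $T$ to each $x_i$ bounds the inner sum by $c\,k^{a+1}(n)\|x_i\|^2$; summing over $i$ yields $c\,k^{a+1}(n)\|x\|^2$, so $T\otimes I_\CE$ is quadratically $(C,a)$-bounded with the same constant $c$. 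The main (indeed only) obstacle is the interchange of the two summations producing the display when $\dim\CE=\infty$: I would justify it by non-negativity of all terms (monotone convergence), which is exactly where the conditions $a\ge 0$ and $p=2$ are used, and which mirrors the Pythagorean argument already invoked for the tensor product in Proposition~\ref{prop_immediate_properties_Calpha}(iii).
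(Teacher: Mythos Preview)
Your argument is correct and follows the same approach as the paper: parts (i) and (ii) are dismissed there as ``immediate,'' and for (iii) the paper simply observes that $T\otimes I_\CE$ is an orthogonal sum of $\dim\CE$ copies of $T$ and appeals to Pythagoras---precisely the identity you write out explicitly. Your added detail (the $2^{p-1}$ convexity bound and projection norms in (ii), the remark on interchanging sums in (iii)) is sound, though for (iii) the outer sum over $j$ is finite, so the interchange is in fact trivial.
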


\begin{proof}
(i) and (ii) are immediate.
For (iii) note that if $d=\dim \CE \le \infty$, 
then the orthogonal sum of $d$ copies of $T$ is clearly
quadratically $(C, a)$-bounded (by the Pythagoras Theorem).
\end{proof}

The following result is very useful. Its proof is simple, and we omit it.
\begin{lemma}\label{lemma Cap bdd implies Cbp bdd}
Let $0 \le a < b$.
Then $(C,a,p)$-boundedness implies $(C,b,p)$-boundedness.
\end{lemma}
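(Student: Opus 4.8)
The plan is to reduce everything to the elementary convolution (semigroup) identity for the Ces\`aro numbers, namely
\[
k^{s+u}(n) = \sum_{m=0}^{n} k^{s}(m)\, k^{u}(n-m),
\]
which is just the coefficientwise form of $(1-t)^{-(s+u)} = (1-t)^{-s}(1-t)^{-u}$. Fix $x \in X$ and abbreviate $a_j := \|T^j x\|^p \ge 0$, and for $s \ge 0$ set $S_s(n) := \sum_{j=0}^{n} k^{s}(n-j)\, a_j$. With this notation, Definition~\ref{defi abs Cap bdd} says precisely that $T$ is $(C,s,p)$-bounded if and only if $S_s(n) \lesssim k^{s+1}(n)\,\|x\|^p$ uniformly in $n$ and $x$. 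So, assuming the $(C,a,p)$-bound $S_a(m) \lesssim k^{a+1}(m)\,\|x\|^p$ (with a constant $C$ independent of $m$ and $x$), I want to deduce the corresponding bound for $S_b$.

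First I would write $c := b - a > 0$ and expand the weight via the convolution identity in the form $k^b(\ell) = \sum_{i=0}^{\ell} k^{a}(\ell - i)\, k^{c}(i)$. Substituting this into $S_b(n) = \sum_{j=0}^n k^b(n-j)\, a_j$, performing the change of summation variable $m = n - i$ in the inner sum, and interchanging the two finite sums (no convergence issue arises, all terms being nonnegative), I expect to collect the inner sum as an $S_a$ and arrive at
\[
S_b(n) = \sum_{m=0}^{n} k^{c}(n-m)\, S_a(m).
\]
This is the crux of the argument: the passage from order $b$ to order $a$ is realized as a further Ces\`aro-type averaging of the already-controlled quantities $S_a(m)$.

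Finally, since $c = b-a > 0$, all the weights $k^{c}(n-m)$ are nonnegative, so I may insert the hypothesis $S_a(m) \lesssim k^{a+1}(m)\,\|x\|^p$ term by term to obtain
\[
S_b(n) \lesssim \|x\|^p \sum_{m=0}^{n} k^{c}(n-m)\, k^{a+1}(m) = \|x\|^p\, k^{(a+1)+c}(n) = \|x\|^p\, k^{b+1}(n),
\]
where the middle equality is the convolution identity once more. Dividing by $k^{b+1}(n)$ yields the $(C,b,p)$-bound, uniformly in $n$ and $x$. I do not anticipate any serious obstacle, which matches the authors' remark that the proof is simple; the only points needing a little care are the nonnegativity of the intermediate weights $k^{c}$ (guaranteed by the strict inequality $b > a$, and covering the boundary case $a = 0$, for which $S_0(n) = \|T^n x\|^p$ and $(C,0,p)$-boundedness is power boundedness) and the bookkeeping in the index interchange that produces the identity for $S_b(n)$.
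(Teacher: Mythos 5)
Your proof is correct and complete: the convolution identity $k^{s+u}(n)=\sum_{m=0}^{n}k^{s}(m)k^{u}(n-m)$ (valid here since $c=b-a>0$ gives nonnegative weights $k^{c}$), the rewriting $S_b(n)=\sum_{m=0}^{n}k^{c}(n-m)S_a(m)$, and the final resummation $\sum_{m}k^{c}(n-m)k^{a+1}(m)=k^{b+1}(n)$ all check out, including the boundary case $a=0$ where $k^{0}(n)=\delta_{n,0}$. The paper itself omits the proof, remarking only that it is simple, and your semigroup-property argument is exactly the standard one the authors evidently had in mind, so there is nothing to compare beyond confirming correctness.
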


This lemma shows an inclusion of classes of operators.
By \cite[Corollaries 2.2 and 2.3]{AB19},  if $T$ is
$(C,a,1)$-bounded then $\|T^n\|=o(n^{a})$ for $0<a\leq 1$ and $\|T^n\|=O(n)$ for $a> 1$.
The following result explains
why the case $a=1$ is special.

\begin{thm}\label{thm Cap equals C1p}
If $a>1$ and $p\ge 1$, then $(C,a,p)$-boundedness is equivalent to $(C,1,p)$-boundedness.
\end{thm}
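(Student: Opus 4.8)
The plan is to establish the equivalence of $(C,a,p)$-boundedness and $(C,1,p)$-boundedness for $a>1$ by proving both implications. The easy direction is already supplied by Lemma~\ref{lemma Cap bdd implies Cbp bdd}: since $1 < a$, $(C,1,p)$-boundedness implies $(C,a,p)$-boundedness. So the entire content of the theorem lies in the reverse implication, and I would concentrate all my effort there. Assume $T$ is $(C,a,p)$-bounded; I want to deduce the $(C,1,p)$ inequality. Using the characterization \eqref{eq char of Cap-bdd}, the hypothesis reads
\[
\sum_{j=0}^{n} (n+1-j)^{a-1}\norm{T^j x}^p \lesssim (n+1)^a \norm{x}^p,
\]
while the conclusion I seek is
\[
\sum_{j=0}^{n} \norm{T^j x}^p \lesssim (n+1)\norm{x}^p.
\]
The weights $(n+1-j)^{a-1}$ grow large for $j$ near $0$ and shrink to $1$ near $j=n$; so the hypothesis controls the early powers strongly but the late powers only weakly. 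The task is to redistribute this information.

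\textbf{The core idea} I would pursue is a \emph{summation/averaging trick}: rather than using a single value of $n$, I would sum the $(C,a,p)$ estimate over a dyadic or arithmetic block of values of $n$ and exploit cancellation or accumulation of the weights. Concretely, fix $N$ and consider the values $n = N, N+1, \dots, 2N$ (say). For a fixed index $j \le N$, the weight $(n+1-j)^{a-1}$ stays comparable to $N^{a-1}$ across this whole block, so summing the hypothesis over this range of $n$ gives
\[
N^{a-1}\sum_{j=0}^{N}\norm{T^j x}^p \;\lesssim\; \sum_{n=N}^{2N}(n+1)^a\norm{x}^p \;\lesssim\; N^{a+1}\norm{x}^p,
\]
whence $\sum_{j=0}^{N}\norm{T^j x}^p \lesssim N^2\norm{x}^p$. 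This is \emph{weaker} than what I want by one power of $N$, which tells me the naive single-block average is not sharp enough. The remedy is to iterate: apply the bound I just derived to the tail sums, or telescope across dyadic scales $N, N/2, N/4, \dots$, so that the linear growth rate $(n+1)$ emerges from summing a geometric series of the block contributions. The precise bookkeeping of constants across dyadic levels is where the argument becomes delicate.

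\textbf{The main obstacle} I anticipate is exactly this gap between the crude block average (which yields quadratic growth) and the desired linear bound. Overcoming it requires genuinely using that the hypothesis holds for \emph{every} $n$, not just one, and organizing the scales so the geometric decay of the weight across dyadic blocks sums to a constant. An alternative route, which I would keep in reserve, is to reason via the abstract growth estimate already cited in the excerpt: from \cite[Corollaries 2.2 and 2.3]{AB19} one knows norm bounds on $\norm{T^n}$ under these Cesàro hypotheses, and combining such pointwise control with an interpolation between the $(C,a,p)$ and $(C,0,p)$-type quantities may directly yield the linear estimate. I expect the cleanest proof to combine the dyadic summation with a careful application of Proposition~\ref{prop properties of Cesaro numbers} to handle the Cesàro-number asymptotics $k^a(n)\asymp (n+1)^{a-1}$ uniformly, so that all the weight comparisons are rigorously justified rather than merely heuristic.
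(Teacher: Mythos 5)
Your overall strategy --- dispose of one direction via Lemma~\ref{lemma Cap bdd implies Cbp bdd} and prove $(C,a,p)\Rightarrow(C,1,p)$ by comparing the hypothesis at a doubled scale against the conclusion at scale $N$ --- is the right one, but your execution contains an arithmetic error that derails the rest of the plan. In your block average over $n=N,\dots,2N$, the left-hand side is undercounted by a factor of $N$: for each fixed $j\le N$, the weight $(n+1-j)^{a-1}$ is $\gtrsim N^{a-1}$ for $\gtrsim N$ values of $n$ in the block (for instance all $n\ge 3N/2$, where $n+1-j> N/2$), so
\[
\sum_{n=N}^{2N}\sum_{j=0}^{n}(n+1-j)^{a-1}\norm{T^jx}^p
\;\ge\;
\sum_{j=0}^{N}\norm{T^jx}^p \sum_{3N/2\le n\le 2N}(n+1-j)^{a-1}
\;\gtrsim\;
N^{a}\sum_{j=0}^{N}\norm{T^jx}^p,
\]
not $N^{a-1}\sum_{j=0}^{N}\norm{T^jx}^p$ as you wrote. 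Comparing with the right-hand side $\lesssim N^{a+1}\norm{x}^p$ then yields $\sum_{j=0}^{N}\norm{T^jx}^p\lesssim N\norm{x}^p$ at once --- exactly the linear bound you want. The ``quadratic vs.\ linear gap'' that you identify as the main obstacle does not exist; consequently the dyadic iteration/telescoping you invoke to bridge it is both unnecessary and, crucially, never actually carried out, so the proposal as written is an incomplete proof resting on a miscalculation. Your fallback route via \cite[Corollaries 2.2 and 2.3]{AB19} is likewise only a gesture: a pointwise growth bound on $\norm{T^n}$ does not by itself recover the averaged $(C,1,p)$ inequality.

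For comparison, the paper's proof is even simpler than a corrected block average: no summation over $n$ is needed at all. Since $a>1$, the Ces\`aro numbers $k^a(m)$ are \emph{increasing} in $m$, so in the single inequality at scale $2n$ every index $j\le n$ carries weight $k^a(2n-j)\ge k^a(n)$; hence
\[
k^{a}(n)\sum_{j=0}^{n}\norm{T^jx}^p
\;\le\;
\sum_{j=0}^{2n}k^{a}(2n-j)\norm{T^jx}^p
\;\lesssim\;
k^{a+1}(2n)\norm{x}^p,
\]
and $k^{a+1}(2n)/k^{a}(n)\asymp (n+1)$ by Proposition~\ref{prop properties of Cesaro numbers}. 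Note where $a>1$ enters: it is precisely what makes the weights monotone, which is the heart both of this one-line argument and of the correct form of your average; your proposal never isolates this monotonicity, which is why the bookkeeping went astray.
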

\begin{proof}
Fix $a>1$ and $p\ge 1$. 
By the above Lemma, we only need to prove that any $(C,a,p)$-bounded operator $T$ is
$(C,1,p)$-bounded.
Let $T$ is $(C,a,p)$-bounded. Then
\begin{equation}
\label{ineq1}
\frac{1}{k^{a+1}(2n)}\sum _{j=0}^{2n}  k^{a}(2n-j)
\norm{T^j x}^p  \lesssim \norm{x}^p,
\end{equation}
for every $n \ge 0$, and every $x \in X$.
Since $a>1$, $k^a(m)$ is
an increasing function of $m$.
In particular, $k^a(n)\leq k^a(2n-j)$ for $j=0,\ldots, n$. Hence
\begin{equation}
\label{ineq2}
k^{a}(n)\sum_{j=0}^{n} \norm{T^jx}^p \le \sum _{j=0}^{2n}  k^{a}(2n-j) \norm{T^jx}^p,
\end{equation}
By \eqref{ineq2} and \eqref{ineq1},
\[
\sum_{j=0}^{n} \norm{T^jx}^p \lesssim \frac{k^{a+1}(2n)}{k^a(n)} \norm{x}^p
\lesssim (n+1) \norm{x}^p,
\]
which means that $T$ is $(C,1,p)$-bounded.
\end{proof}

\begin{thm}\label{th4.4}
Let $a>0$ and $1\leq q<p$. 
If $T$ is $(C, a, p)$-bounded, then it is also $(C, b, q)$-bounded for each $b>qa /p$.
In particular, $(C, a, p)$-boundedness implies $(C, a, q)$-boundedness.
\end{thm}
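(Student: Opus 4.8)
The plan is to deduce the conclusion directly from the equivalent characterization~\eqref{eq char of Cap-bdd} of $(C,a,p)$-boundedness by a single application of H\"older's inequality, transferring control of the $p$-th powers $\|T^jx\|^p$ to control of the $q$-th powers $\|T^jx\|^q$. Since $q<p$, the natural pair of conjugate exponents is $r:=p/q>1$ and $r':=p/(p-q)$, so that $1/r+1/r'=1$ and $p/r=q$. It then suffices, again by~\eqref{eq char of Cap-bdd} but applied with $b$ and $q$, to prove that
\[
\sum_{j=0}^n (n+1-j)^{b-1}\|T^jx\|^q \lesssim (n+1)^b\|x\|^q
\]
for all $n\ge 0$ and all $x\in X$.

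First I would factor the summand as $A_jB_j$, where
\[
A_j:=(n+1-j)^{(a-1)/r}\|T^jx\|^q, \qquad B_j:=(n+1-j)^{\,b-1-(a-1)/r},
\]
chosen precisely so that $A_j^{\,r}=(n+1-j)^{a-1}\|T^jx\|^p$ reproduces the weighted sum appearing in the $(C,a,p)$ hypothesis. Applying H\"older's inequality with exponents $r,r'$ gives
\[
\sum_{j=0}^n (n+1-j)^{b-1}\|T^jx\|^q \le \Big(\sum_{j=0}^n A_j^{\,r}\Big)^{1/r}\Big(\sum_{j=0}^n B_j^{\,r'}\Big)^{1/r'}.
\]
By~\eqref{eq char of Cap-bdd} for $(C,a,p)$, the first factor is bounded by $\big(C(n+1)^a\|x\|^p\big)^{1/r}=C^{1/r}(n+1)^{a/r}\|x\|^q$, using $p/r=q$. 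The second factor is a pure power sum: after the change of index $m=n+1-j$ it equals $\big(\sum_{m=1}^{n+1} m^{\gamma}\big)^{1/r'}$, where $\gamma:=\big(b-1-(a-1)/r\big)r'$.

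The heart of the matter is the elementary estimate for $\sum_{m=1}^{n+1}m^{\gamma}$, and this is exactly where the hypothesis $b>qa/p$ enters. A short computation shows that the inequality $\gamma>-1$ is \emph{equivalent} to $b>qa/p$ (using $-1/r'=1/r-1$ and $a/r=qa/p$); under this strict inequality one has $\sum_{m=1}^{n+1}m^{\gamma}\asymp (n+1)^{\gamma+1}$. Combining the two factors, the resulting power of $(n+1)$ is $a/r+(\gamma+1)/r'$, and the exponent bookkeeping collapses: substituting $\gamma$ and using $a/r-(a-1)/r=1/r$ together with $1/r+1/r'=1$, one finds $a/r+(\gamma+1)/r'=b$ exactly. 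This yields the displayed bound, hence $(C,b,q)$-boundedness. I do not expect a genuine conceptual obstacle; the only delicate point is the exponent accounting, in particular verifying that $b>qa/p$ is precisely the threshold making $\gamma>-1$, so that the power sum contributes $(n+1)^{\gamma+1}$ rather than a logarithmic factor (the excluded borderline being $b=qa/p$, i.e.\ $\gamma=-1$).

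For the ``in particular'' statement I would simply take $b=a$: since $q<p$ we have $qa/p<a$, so $b=a$ satisfies $b>qa/p$ and the first part applies, giving that $(C,a,p)$-boundedness implies $(C,a,q)$-boundedness.
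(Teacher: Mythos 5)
Your proof is correct and is essentially the paper's own argument: the paper applies H\"older's inequality with the same conjugate pair ($s'=p/q$ on the norm term, $s=p/(p-q)$ on the pure weight), and its splitting parameter $\gamma$ is chosen so that $(b-1)\gamma s' = a-1$, which makes the norm-side weight exactly your $(n+1-j)^{(a-1)/r}$ and the weight-side factor exactly your $B_j$, with the same threshold computation showing $b>qa/p$ is equivalent to integrability of the power sum. The only difference is cosmetic but in your favor: your direct parametrization of the exponent split is defined for all $b$, whereas the paper's $\gamma = q(a-1)/\bigl(p(b-1)\bigr)$ degenerates at $b=1$, forcing a separate treatment of that case via Lemma~\ref{lemma Cap bdd implies Cbp bdd}, a case distinction your version silently eliminates.
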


\begin{proof}
Let us first recall that if $r > -1$, then
\begin{equation}\label{eq sum estimate by integral}
\sum_{j=1}^{m} j^r \, \lesssim \, m^{r + 1} \qquad (\forall m \ge 1).
\end{equation}
Let $T$ be $(C, a, p)$-bounded and let $b>qa /p$.
Suppose first that $b \neq 1$, and put
\[
s:=\frac{p}{p-q}, \qquad s':=\frac{p}{q}, \qquad \gamma:=\frac{q(a-1)}{p(b-1)}.
\]
Note that $s$ and $s'$ are positive and satisfy $1/s + 1/s' = 1$.
Since
\[
(b-1)(1-\ga)s = \dfrac{pb-qa}{p-q} - 1 > -1
\qquad
\tn{ and }
\qquad
(b-1)\ga s' = a-1,
\]
using H\"older's inequality and \eqref{eq sum estimate by integral} it follows that
\[
\begin{split}
&\dfrac{1}{(n+1)^b} \sum_{j=0}^{n} (n+1-j)^{b-1} \norm{T^j x}^q \\
&\le
\dfrac{1}{(n+1)^b}
\left( \sum_{j=0}^{n} (n+1-j)^{(b-1)(1-\ga)s} \right)^{1/s}
\left( \sum_{j=0}^{n} (n+1-j)^{(b-1)\ga s'} \norm{T^j x}^{qs'} \right)^{1/s'} \\
&\lesssim
(n+1)^{-qa/p}
 \left( \sum_{j=0}^{n} (n+1-j)^{a-1} \norm{T^j x}^{p} \right)^{q/p} \\
&= \left( \dfrac{1}{(n+1)^a} \sum_{j=0}^{n} (n+1-j)^{a-1} \norm{T^j x}^{p} \right)^{q/p}
\end{split}
\]
for every $x \in X$ and every non-negative integer $n$.
Hence the statement follows using \eqref{eq char of Cap-bdd}.

Now suppose that $b=1$. Take any $b' \in (qa/p,1)$.
We have already proved that $T$ is $(C, b', p)$-bounded. 
Then, by Lemma~\ref{lemma Cap bdd implies Cbp bdd}, it
follows that $T$ is $(C, 1, p)$-bounded. This completes the proof.
\end{proof}
\begin{lemma}\label{lemma S is Cap bounded}
Let $a>0$ and $p \ge 1$. Then every isometry $S$ is $(C,a,p)$-bounded.
\end{lemma}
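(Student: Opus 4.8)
The plan is to use the two defining features immediately at hand: that an isometry preserves norms, so that $\norm{S^j x} = \norm{x}$ for every $j \ge 0$ and every $x \in X$, and the elementary Cesàro identity $\sum_{j=0}^n k^a(j) = k^{a+1}(n)$ recorded in the remarks following Definition~\ref{defi Cesaro means and Cesaro bounded}. Together these collapse the weighted average appearing in Definition~\ref{defi abs Cap bdd} to a single constant, so that the $(C,a,p)$-bound holds trivially.

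Concretely, I would start from the left-hand side of the defining inequality and pull $\norm{S^j x}^p = \norm{x}^p$ out of the sum, obtaining
\[
\frac{1}{k^{a+1}(n)}\sum_{j=0}^n k^a(n-j)\,\norm{S^j x}^p
= \frac{\norm{x}^p}{k^{a+1}(n)}\sum_{j=0}^n k^a(n-j).
\]
Reindexing the remaining sum by $i = n-j$ turns it into $\sum_{i=0}^n k^a(i)$, which equals $k^{a+1}(n)$ by the identity above. Hence the whole expression equals exactly $\norm{x}^p$, uniformly in $n$, so that the supremum over $n \ge 0$ is $\norm{x}^p$ and the required bound holds with constant $1$.

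There is essentially no obstacle here: the conclusion is valid for all $a>0$ and all $p\ge 1$ with the sharp constant $1$, and the argument uses neither the asymptotics \eqref{asymp} of the Cesàro numbers nor their positivity beyond what is needed to write the sum, only the summation identity. One may note for comparison that the same computation gives that any isometry is quadratically $(C,a)$-bounded (the case $p=2$ used in the proof of Theorem~\ref{thm Ca to Cesaro bounded}), in accordance with the chain of implications displayed earlier.
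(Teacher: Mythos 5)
Your proposal is correct and coincides with the paper's own proof: both pull $\norm{S^jx}^p=\norm{x}^p$ out of the sum and use the identity $\sum_{j=0}^n k^a(n-j)=k^{a+1}(n)$ to conclude the expression equals $\norm{x}^p$ exactly, with constant $1$.
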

\begin{proof}
This is immediate, since indeed
\begin{equation}\label{eq S is Cap bounded}
\frac{1}{k^{a+1}(n)}\sum _{j=0}^n  k^{a}(n-j) \|S^jx\|^p
=
\frac{1}{k^{a+1}(n)} \left( \sum _{j=0}^n  k^{a}(n-j) \right) \|x\|^p
=
 \|x\|^p
\end{equation}
for every $x \in X$.
\end{proof}

\begin{lemma}\label{quadraticShift}
Let $0<s<1$ and let $a>0$. 
Then $B_s$ is quadratically $(C,a)$-bounded if and only if $1-s<a$.
Moreover, for $1-s<a$ we have
\begin{equation}\label{eq lim Bs Ca bdd is 0}
\lim_{n\to\infty}\frac{1}{k^{a+1}(n)}\displaystyle\sum_{j=0}^n k^{a}(n-j)\|B_s^jx\|^2=0
\qquad \qquad
(\forall x \in \CH_s).
\end{equation}
\end{lemma}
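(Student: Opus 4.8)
The plan is to reduce the quadratic $(C,a)$-boundedness of $B_s$ to a pointwise estimate on the monomials $e_m(t)=t^m$ and then to evaluate the resulting convolution sums. Throughout write $\ka_m:=k^s(m)=\norm{e_m}_{\CH_s}^2$, so that $\ka_m\asymp(m+1)^{s-1}$ by \eqref{asymp}. Since $B_s^je_m=e_{m-j}$ for $j\le m$ and $B_s^je_m=0$ for $j>m$, the family $\{B_s^je_m\}_m$ is orthogonal for each fixed $j$; hence, exactly as in \eqref{eq Back Forw change summation}, $\norm{B_s^jx}^2=\sum_m\abs{x_m}^2\norm{B_s^je_m}^2$ for $x=\sum_mx_me_m\in\CH_s$. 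Writing
\[
\Sigma_n(x):=\frac{1}{k^{a+1}(n)}\sum_{j=0}^n k^a(n-j)\norm{B_s^jx}^2,
\]
this yields $\Sigma_n(x)=\sum_m\abs{x_m}^2\Sigma_n(e_m)$, and therefore $\Sigma_n(x)\le\bigl(\sup_m\Sigma_n(e_m)/\ka_m\bigr)\norm{x}^2$, with the constant attained along $x=e_m$. Consequently, in view of \eqref{eq char of Cap-bdd}, $B_s$ is quadratically $(C,a)$-bounded if and only if
\[
C_{a,s}:=\sup_{n,m}\ \frac{1}{\ka_m\,k^{a+1}(n)}\sum_{j=0}^{\min(n,m)}k^a(n-j)\,\ka_{m-j}<\infty.
\]

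To analyse $C_{a,s}$ I would use $k^a(l)\asymp(l+1)^{a-1}$, $\ka_l\asymp(l+1)^{s-1}$ and $k^{a+1}(n)\asymp(n+1)^a$ from Proposition~\ref{prop properties of Cesaro numbers}, together with the elementary estimate $\sum_{i=1}^mi^r\asymp m^{r+1}$ for $r>-1$, $\asymp\log m$ for $r=-1$, $\asymp1$ for $r<-1$. In the regime $m\le n$, the substitution $i=m-j$ turns the normalized sum into
\[
\frac{\Sigma_n(e_m)}{\ka_m}\asymp\frac{(m+1)^{1-s}}{(N+m+1)^a}\sum_{i=0}^m(N+i+1)^{a-1}(i+1)^{s-1},\qquad N:=n-m\ge0.
\]
Bounding $(N+i+1)^{a-1}$ by $(N+m+1)^{a-1}$ when $a\ge1$ and by $(i+1)^{a-1}$ when $a<1$, one checks in each case that this quotient is $\lesssim1$ precisely when $a+s>1$. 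The decisive example is $m=n$ (so $N=0$), where the quotient is $\asymp(m+1)^{1-s-a}\sum_{i=0}^m(i+1)^{a+s-2}$: this is $\asymp1$ if $a+s>1$, but $\asymp(m+1)^{1-s-a}\to\infty$ if $a+s<1$ and $\asymp\log(m+1)\to\infty$ if $a+s=1$. This already proves the ``only if'' direction, namely $C_{a,s}=\infty$ whenever $a\le1-s$.

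The remaining regime $m>n$ is, I expect, the main obstacle, since here the crude operator-norm bound $\norm{B_s^j}^2\asymp(j+1)^{1-s}$ from \eqref{eq norm Bs} makes $\Sigma_n$ diverge and so cannot be used. After the substitution $i=n-j$,
\[
\frac{\Sigma_n(e_m)}{\ka_m}\asymp\frac{(n+M+1)^{1-s}}{(n+1)^a}\sum_{i=0}^n(i+1)^{a-1}(M+i+1)^{s-1},\qquad M:=m-n\ge1.
\]
The plan is to split the sum at $i\approx M$, using $M+i+1\asymp M$ for $i\le M$ and $M+i+1\asymp i$ for $i>M$. In the subcase $n\le M$ one gets $\sum\asymp M^{s-1}(n+1)^a$ and, since $n+M+1\asymp M$, the quotient is $\asymp1$; in the subcase $n>M$ one gets $\sum\lesssim M^{a+s-1}+(n+1)^{a+s-1}\lesssim(n+1)^{a+s-1}$ and, since $n+M+1\asymp n+1$, the quotient is again $\lesssim1$ whenever $a+s>1$. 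Combining this with the regime $m\le n$ shows $C_{a,s}<\infty$ for $a+s>1$, which is the ``if'' direction ($1-s<a$).

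Finally, for the limit \eqref{eq lim Bs Ca bdd is 0} under $1-s<a$, I would first observe that for each fixed $m$ the sum defining $\Sigma_n(e_m)$ has at most $m+1$ terms once $n\ge m$, each of size $k^a(n-j)\asymp(n+1)^{a-1}$, so $\Sigma_n(e_m)=O(1/n)\to0$ and hence $\Sigma_n(e_m)/\ka_m\to0$. Since $\Sigma_n(x)=\sum_m\abs{x_m}^2\ka_m\cdot\bigl(\Sigma_n(e_m)/\ka_m\bigr)$ with $\Sigma_n(e_m)/\ka_m\le C_{a,s}$ uniformly in $n$ and $\sum_m\abs{x_m}^2\ka_m=\norm{x}^2<\infty$, the dominated convergence theorem for series lets me pass the limit inside the sum, giving $\Sigma_n(x)\to0$ for every $x\in\CH_s$. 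This is exactly \eqref{eq lim Bs Ca bdd is 0} and completes the argument.
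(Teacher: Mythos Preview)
Your argument is correct and rests on the same underlying computation as the paper's proof: both exploit the orthogonality $\norm{B_s^jx}^2=\sum_m\abs{x_m}^2\norm{B_s^je_m}^2$, the asymptotics $k^a(l)\asymp(l+1)^{a-1}$, and elementary estimates for $\sum i^r$. The difference is purely organizational. The paper works with a general $x$ throughout, splits the $m$-sum into three ranges $[0,n]$, $[n+1,2n]$, $[2n+1,\infty)$, and shows each piece tends to $0$ (thereby proving boundedness and the limit in one stroke); it also treats $a>1$ separately by reducing to the case $a=1$. You instead first reduce explicitly to monomials, establish the uniform bound $\sup_{n,m}\Sigma_n(e_m)/\ka_m<\infty$ via the dichotomy $m\le n$ versus $m>n$ (with a further split $n\lessgtr M$), and then obtain \eqref{eq lim Bs Ca bdd is 0} by the pointwise observation $\Sigma_n(e_m)\to0$ combined with dominated convergence. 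Your route handles all $a>1-s$ uniformly and separates the boundedness from the limit cleanly; the paper's three-range split is slightly more ad hoc but gets both conclusions simultaneously. The estimates themselves (and the decisive test on $x=e_n$ for the failure at $a\le 1-s$) are the same in both.
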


\begin{proof}
Recall the notation $e_n=t^n\in \CH_k = \CH_s$, where $k(t)=(1-t)^{-s}$.
Suppose that $a=1-s$. Then
\begin{equation}\label{eq quadraticShift for 1-s}
\begin{split}
\dfrac{1}{(n+1)^a} \sum_{j=0}^{n} (n+1-j)^{a-1} \norm{B_s^j e_n}^2
& \gtrsim
\dfrac{1}{(n+1)^{1-s}} \sum_{j=0}^{n} (n+1-j)^{-s} (n+1-j)^{s-1} \\
&=
\dfrac{1}{(n+1)^{1-s}} \sum_{j=1}^{n+1} j^{-1}
\, \gtrsim \,
\log(n+2) \norm{e_n}^2
\end{split}
\end{equation}
for every $n$. Therefore $B_s$ is not quadratically $(C,1-s)$-bounded, and
by Lemma \ref{lemma Cap bdd implies Cbp bdd} we obtain that 
$B_s$ is not quadratically $(C,a)$-bounded for $a < 1-s$.

Let us assume now that $1-s<a \le 1$ and fix $x \in \CH_s$. Write $x$ in the form
$x = \sum x_m e_m$, where $x_m \in \C$. Then
\[
\norm{B_s^j x}^2 = \sum_{m=j}^{\infty} k^s(m-j) \abs{x_m}^2
\, \lesssim \,  \sum_{m=j}^{\infty} (m+1-j)^{s-1} \abs{x_m}^2,
\]
for every $j \ge 0$.
Hence
\begingroup
\allowdisplaybreaks
\begin{align*} 
\dfrac{1}{(n+1)^a}  \sum_{j=0}^{n} (n+1-j)^{a-1} & \norm{B_s^j x}^2 \\
&\lesssim
\dfrac{1}{(n+1)^a} \sum_{j=0}^{n} (n+1-j)^{a-1} \sum_{m=j}^{\infty} (m+1-j)^{s-1} \abs{x_m}^2 \\
&= \dfrac{1}{(n+1)^a} \sum_{m=0}^{n} \abs{x_m}^2 \sum_{j=0}^{m} (n+1-j)^{a-1} (m+1-j)^{s-1} \\
&+ \dfrac{1}{(n+1)^a}\sum_{m=n+1}^{2n} \abs{x_m}^2 \sum_{j=0}^{n} (n+1-j)^{a-1} (m+1-j)^{s-1} \\
&+ \dfrac{1}{(n+1)^a}\sum_{m=2n+1}^{\infty} \abs{x_m}^2 \sum_{j=0}^{n} (n+1-j)^{a-1} (m+1-j)^{s-1} \\
&=:
(I) + (II) + (III).
\end{align*}
\endgroup
In (I), note that since $1-s<a \le 1$, and $m \le n$, we have
\begin{equation}\label{eq asymp m le n}
\sum_{j=0}^{m} (n+1-j)^{a-1} (m+1-j)^{s-1}
\le
\sum_{j=0}^{m+1} (m+1-j)^{a+s-2}
\lesssim (m+1)^{a+s-1},
\end{equation}
where in the last estimate we used \eqref{eq sum estimate by integral}.
Therefore
\[
\begin{split}
(I)
&\lesssim
\dfrac{1}{(n+1)^a} \sum_{m=0}^{n} \abs{x_m}^2 (m+1)^{a+s-1}
=
\left\{ \sum_{m=0}^{[\sqrt{n}]} 
+ \sum_{m= [\sqrt{n}] + 1}^{n} \right\} \abs{x_m}^2 \,  \dfrac{(m+1)^{a+s-1}}{(n+1)^a} \\
&\lesssim
\dfrac{\norm{x}^2}{\sqrt{n^a}} + \sum_{m= [\sqrt{n}] + 1}^{n} \abs{x_m}^2 (m+1)^{s-1}
\longrightarrow 0 \quad (\tn{as } n \to \infty).
\end{split}
\]
In (II), using that $m>n$ and $s-1<0$, we have
\[
\sum_{j=0}^{n} (n+1-j)^{a-1} (m+1-j)^{s-1} \le \sum_{j=0}^{n} (n+1-j)^{a+s-2} 
\lesssim (n+1)^{a+s-1}.
\]
Therefore
\[
\begin{split}
(II)
&\lesssim
\dfrac{1}{(n+1)^a} \sum_{m=n+1}^{2n} \abs{x_m}^2 (n+1)^{a+s-1}
=
(n+1)^{s-1} \sum_{m=n+1}^{2n} \abs{x_m}^2 \\
&\lesssim \sum_{m=n+1}^{2n} \abs{x_m}^2 (m+1)^{s-1}
\longrightarrow 0 \quad (\tn{as } n \to \infty).
\end{split}
\]
Finally, in (III), since $m>2n$ we have that
\[
\sum_{j=0}^{n} (n+1-j)^{a-1} (m+1-j)^{s-1}
\lesssim (m+1)^{s-1} \sum_{j=0}^{n} (n+1-j)^{a-1} \lesssim (m+1)^{s-1} (n+1)^{a}.
\]
Therefore
\[
(III)
\lesssim
\sum_{m=2n+1}^{\infty} \abs{x_m}^2 (m+1)^{s-1}
\longrightarrow 0 \quad (\tn{as } n \to \infty).
\]
Hence \eqref{eq lim Bs Ca bdd is 0} follows when $1-s < a \le 1$.
Finally, suppose that $1<a$. Then
\[
\dfrac{1}{(n+1)^a} \sum_{j=0}^{n} (n+1-j)^{a-1} \norm{B_s^j x}^2
\le
\dfrac{1}{n+1} \sum_{j=0}^n \norm{B_s^jx}^2
\longrightarrow 0 \quad (\tn{as } n \to \infty),
\]
since this is the case of $a=1$ in \eqref{eq lim Bs Ca bdd is 0} (already proved).
Note that \eqref{eq lim Bs Ca bdd is 0} implies
quadratical $(C,a)$-boundedness, so the proof is complete.
\end{proof}

This lemma allows us to prove the following more general result.

\begin{thm}\label{thm gen Bs Cbp-bdd}
Let $0<s<1$ and $1 \le q \le 2$. 
Then $B_s$ is $(C,b,q)$-bounded if and only if $b > q(1-s)/2$. 
Moreover, for $b > q(1-s)/2$ we have
\begin{equation}\label{eq gen Bs Cbp-bdd limit 0}
\lim_{n\to\infty}\frac{1}{k^{b+1}(n)}\displaystyle\sum_{j=0}^n k^{b}(n-j)\norm{B_s^jx}^q=0
\qquad \qquad
(\forall x \in H).
\end{equation}
\end{thm}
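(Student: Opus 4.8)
The plan is to establish the threshold $b=q(1-s)/2$ from both sides: necessity of $b>q(1-s)/2$ by testing the defining inequality \eqref{eq char of Cap-bdd} on the basis vectors $e_N=t^N$, and sufficiency together with the limit \eqref{eq gen Bs Cbp-bdd limit 0} by reducing to the quadratic case $q=2$ already settled in Lemma~\ref{quadraticShift}, the reduction being furnished by the pointwise H\"older bound contained in the proof of Theorem~\ref{th4.4}.

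For necessity I would use $x=e_N$. Since $B_s^je_N=e_{N-j}$ for $0\le j\le N$ and $\norm{e_m}^2=k^s(m)\asymp(m+1)^{s-1}$ by \eqref{asymp}, the substitution $i=N-j$ gives
\[
\frac{1}{(N+1)^{b}}\sum_{j=0}^{N}(N+1-j)^{b-1}\norm{B_s^je_N}^{q}
\asymp \frac{1}{(N+1)^{b}}\sum_{i=0}^{N}(i+1)^{r},\qquad r:=b-1-\tfrac{q(1-s)}2,
\]
while $\norm{e_N}^{q}\asymp(N+1)^{-q(1-s)/2}$. If $b\le q(1-s)/2$ then $r\le-1$: for $r=-1$ the sum is $\asymp\log(N+2)$, and for $r<-1$ it is bounded; in either case the quotient of the left-hand side by $\norm{e_N}^q$ tends to $\infty$, contradicting $(C,b,q)$-boundedness. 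Hence $b>q(1-s)/2$ is necessary. (For $r>-1$ the same computation yields a bounded quotient, consistent with the sufficiency below.)

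For sufficiency and the limit, the case $q=2$ is exactly Lemma~\ref{quadraticShift}, which gives both quadratic $(C,b)$-boundedness and \eqref{eq lim Bs Ca bdd is 0} whenever $b>1-s=q(1-s)/2$. For $1\le q<2$ and a given $b>q(1-s)/2$ with $b\ne1$, I would pick an auxiliary exponent $a$ with $1-s<a<2b/q$; this interval is nonempty precisely because $b>q(1-s)/2$. By Lemma~\ref{quadraticShift}, $B_s$ is $(C,a,2)$-bounded and the corresponding quadratic average tends to $0$. Theorem~\ref{th4.4} (with $p=2>q$ and $b>qa/2$) then gives $(C,b,q)$-boundedness; moreover its proof establishes, for each $n$, the pointwise bound
\[
\frac{1}{(n+1)^{b}}\sum_{j=0}^{n}(n+1-j)^{b-1}\norm{B_s^jx}^{q}
\lesssim\Big(\frac{1}{(n+1)^{a}}\sum_{j=0}^{n}(n+1-j)^{a-1}\norm{B_s^jx}^{2}\Big)^{q/2},
\]
so the vanishing of the quadratic $(C,a)$-average forces the order-$q$ $(C,b)$-average to vanish; translating through $k^{c}(n)\asymp(n+1)^{c-1}$ yields \eqref{eq gen Bs Cbp-bdd limit 0}.

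The one delicate point, and the main obstacle, is the endpoint $b=1$, where the exponent $\gamma$ in the proof of Theorem~\ref{th4.4} degenerates and the displayed H\"older bound is unavailable. Here I would choose $b'\in\big(q(1-s)/2,1\big)$, which is possible since $q(1-s)/2\le1-s<1$, and apply the previous paragraph (with $b'\ne1$) to obtain that the order-$q$ $(C,b')$-averages of $\norm{B_s^jx}^{q}$ tend to $0$. Since the quantity in \eqref{eq gen Bs Cbp-bdd limit 0} is precisely the Ces\`aro mean of order $b$ of the nonnegative sequence $\{\norm{B_s^jx}^{q}\}$, the classical Ces\`aro consistency theorem (passage from order $b'$ to the larger order $1$) delivers the limit at $b=1$. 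Boundedness at $b=1$ then follows from the limit, or directly from Lemma~\ref{lemma Cap bdd implies Cbp bdd} applied with $b'<1$.
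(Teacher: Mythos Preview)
Your proof is correct and follows essentially the same approach as the paper: necessity by testing the defining inequality on the basis vectors $e_N$, and sufficiency (together with the limit) by reducing to the quadratic case $q=2$ of Lemma~\ref{quadraticShift} via the H\"older estimate from the proof of Theorem~\ref{th4.4}. You are in fact slightly more careful than the paper, which sets $a=2b/q$ exactly (the boundary value where the H\"older argument degenerates) and does not isolate $b=1$; your choice $a\in(1-s,2b/q)$ and your separate treatment of $b=1$ via Ces\`aro consistency close these minor gaps.
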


\begin{proof}
Note that $q = 2$ is precisely Lemma \ref{quadraticShift}. 
So we assume that $1 \le q < 2$. 
If $b = q(1-s)/2$, taking $x = e_n$, we get, as in \eqref{eq quadraticShift for 1-s}, that
\[
\frac{1}{k^{b+1}(n)}\displaystyle\sum_{j=0}^n k^{b}(n-j)\norm{B_s^j e_n}^q
\gtrsim
\log(n+2) \norm{e_n}^2
\]
for every $n$. Therefore $B_s$ is not $(C,q(1-s)/2,q)$-bounded,
and by Lemma~\ref{lemma Cap bdd implies Cbp bdd}
we get that $B_s$ is not $(C,b,q)$-bounded for $b < q(1-s)/2$.

Now suppose that $b > q(1-s)/2$. Then $b = qa/2$ for some $a>1-s$. 
Using H\"older's inequality as in the proof of Theorem \ref{th4.4}, we obtain
\[
\begin{split}
\dfrac{1}{(n+1)^b} \sum_{j=0}^{n} (n+1-j)^{b-1} \norm{B_s^j x}^q
&\lesssim
\left(
\dfrac{1}{(n+1)^a} \sum_{j=0}^{n} (n+1-j)^{a-1} \norm{B_s^j x}^2
\right)^{q/2} \\
&\xrightarrow[n \to \infty]{} 0,
\end{split}
\]
by Lemma \ref{quadraticShift}. Hence \eqref{eq gen Bs Cbp-bdd limit 0} follows.
\end{proof}

\begin{proof}[Proof of Theorem \ref{thm Ca to Cesaro bounded}]
Let $T \in \pweak{a}$ with $0<a<1$ and let $b > 1-a$.
By Theorem~\ref{thmBCHMc} and Theorem~\ref{thm can choose VD} (i),
$T$ is unitarily equivalent to a part of $(B_a \otimes I_\mathfrak{D}) \oplus S$.
Hence, by Lemma~\ref{lemma inherit by a part a tensor IR} (i),
it is enough to prove that $(B_a \otimes I_\mathfrak{D}) \oplus S$ is
quadratically $(C,b)$-bounded.
But this is immediate using
Lemma \ref{lemma inherit by a part a tensor IR} (ii) and (iii), 
and Lemmas \ref{lemma S is Cap bounded} and \ref{quadraticShift}.
\end{proof}

For the proof of Theorem~\ref{thm S appears iff}
we need the following lemma, which is in
the spirit of Lemma~\ref{lemma inherit by a part a tensor IR}.

\begin{lemma}\label{lemma inherit by a part a tensor IR 2}
The following holds.
\begin{itemize}
\item[\tn{(i)}] 
If $T$ satisfies \eqref{eq limit C-means are 0 general}, 
then any part of $T$ also satisfies \eqref{eq limit C-means are 0 general}.
\item[\tn{(ii)}] 
If $T_1$ and $T_2$ satisfy \eqref{eq limit C-means are 0 general}, 
then any direct sum $T_1 \dotplus T_2$ also satisfies 
\eqref{eq limit C-means are 0 general}.
\item[\tn{(iii)}] 
Let $T$ be a bounded linear operator on a Hilbert space. 
If $T$ satisfies \eqref{eq limit C-means are 0 general}, 
then the operator $T \otimes I_\CE$ also satisfies \eqref{eq limit C-means are 0 general}, 
where $I_\CE$ is the identity operator on some Hilbert space $\CE$.
\end{itemize}
\end{lemma}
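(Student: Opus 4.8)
The plan is to introduce, for a bounded operator $R$ on a Hilbert space and a vector $x$, the nonnegative weighted Ces\`aro quadratic mean
\[
Q_n(R,x):=\frac{1}{k^{b+1}(n)}\sum_{j=0}^{n}k^{b}(n-j)\,\norm{R^jx}^2,
\]
so that condition \eqref{eq limit C-means are 0 general} reads simply $\lim_{n\to\infty}Q_n(T,x)=0$ for every $x$. Since $b>1-a>0$, all the weights $k^{b}(n-j)$ are nonnegative, which is what makes the additivity arguments clean. Parts (i) and (ii) are then immediate, exactly as in Lemma~\ref{lemma inherit by a part a tensor IR}. For (i), if $L$ is an invariant subspace and $x\in L$, then $(T|_L)^jx=T^jx$, so $Q_n(T|_L,x)=Q_n(T,x)\to 0$. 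For (ii), writing $x=x_1\oplus x_2$ one has $\norm{(T_1\dotplus T_2)^jx}^2=\norm{T_1^jx_1}^2+\norm{T_2^jx_2}^2$, hence $Q_n(T_1\dotplus T_2,x)=Q_n(T_1,x_1)+Q_n(T_2,x_2)\to 0$.

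For (iii) I would first record the uniform bound hidden in the hypothesis. For each fixed $x$ the sequence $\{Q_n(T,x)\}_n$ converges, hence is bounded; writing $Q_n(T,x)=\ip{A_nx}{x}$ with $A_n:=\frac{1}{k^{b+1}(n)}\sum_{j=0}^{n}k^{b}(n-j)T^{*j}T^{j}\ge 0$, the uniform boundedness principle yields a constant $M<\infty$ with $Q_n(T,x)\le M\norm{x}^2$ for all $n$ and all $x$ (in other words, $T$ is automatically quadratically $(C,b)$-bounded). Next I would use the identification of $T\otimes I_\CE$, where $d:=\dim\CE\le\infty$, with the orthogonal sum of $d$ copies of $T$: choosing an orthonormal basis $\{e_i\}$ of $\CE$ and writing a vector of $H\otimes\CE$ as $\xi=\sum_i x_i\otimes e_i$ with $\sum_i\norm{x_i}^2=\norm{\xi}^2<\infty$, orthogonality (the Pythagoras theorem) gives $\norm{(T\otimes I_\CE)^j\xi}^2=\sum_i\norm{T^jx_i}^2$ and therefore
\[
Q_n(T\otimes I_\CE,\xi)=\sum_i Q_n(T,x_i).
\]

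The only real point is to pass to the limit in this series when $d=\infty$, and this is where the uniform bound does the work. Given $\ep>0$, I would split the sum: using $Q_n(T,x_i)\le M\norm{x_i}^2$, pick $N$ with $\sum_{i>N}\norm{x_i}^2<\ep/(2M)$, so that the tail $\sum_{i>N}Q_n(T,x_i)\le M\sum_{i>N}\norm{x_i}^2<\ep/2$ uniformly in $n$; the remaining finite sum $\sum_{i\le N}Q_n(T,x_i)$ is a finite sum of sequences each tending to $0$ by hypothesis, hence is $<\ep/2$ for $n$ large. Combining the two estimates gives $Q_n(T\otimes I_\CE,\xi)<\ep$ for $n$ large, i.e.\ $T\otimes I_\CE$ satisfies \eqref{eq limit C-means are 0 general}. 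The main (indeed the only) obstacle is this interchange of limit and infinite summation; in the finite-dimensional case it is trivial, and for $\dim\CE=\infty$ the quadratic $(C,b)$-boundedness coming from the uniform boundedness principle is exactly what licenses the tail estimate.
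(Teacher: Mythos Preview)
Your proof is correct and follows essentially the same route as the paper: parts (i) and (ii) are declared immediate, and for (iii) the paper identifies $T\otimes I_\CE$ with an orthogonal sum of copies of $T$ via Pythagoras (as in Lemma~\ref{lemma inherit by a part a tensor IR}(iii)) and then invokes Lebesgue's Dominated Convergence Theorem. Your $\ep/2$ tail estimate is precisely DCT written out by hand, and your use of the uniform boundedness principle to obtain the bound $Q_n(T,x)\le M\norm{x}^2$ supplies the dominating summable sequence $\{M\norm{x_i}^2\}$ that the paper's one-line appeal to DCT tacitly relies on but does not spell out.
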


\begin{proof}
(i) and (ii) are immediate.
For (iii) we use the same argument as in Lemma \ref{lemma inherit by a part a tensor IR} (iii)
and a simple application of Lebesgue's Dominated Convergence Theorem.
\end{proof}

\begin{proof}[Proof of Theorem~\ref{thm S appears iff}]
As in the proof of Theorem \ref{thm Ca to Cesaro bounded}, 
we have that $T$ is unitarily equivalent to
\[
(B_a \otimes I_\mathfrak{D}) \oplus S \, | \,  \mathcal{L},
\]
where $\mathcal{L}$ is a subspace of 
$(\mathcal{H}_{a} \otimes \mathfrak{D}) \oplus \mathcal{W}$ invariant by 
$(B_a \otimes I_\mathfrak{D}) \oplus S$.

Let us prove the circle of implications (i) $\Rightarrow$ (ii) $\Rightarrow$ (iii) $\Rightarrow$ (i).

Suppose that (i) is true. That is, $T$ is unitarily equivalent to
\[
(B_a \otimes I_\mathfrak{D}) \, | \,  \mathcal{L},
\]
where $\mathcal{L}$ is a subspace of $\mathcal{H}_{a} \otimes \mathfrak{D}$ 
invariant by $B_a \otimes I_\mathfrak{D}$. 
Then (ii) follows using Lemmas \ref{quadraticShift} and 
\ref{lemma inherit by a part a tensor IR 2}.

Suppose now that
\[
\liminf_{n \to \infty} \norm{T^n x} > 0
\]
for some $x \in H$. 
Then, obviously, $\norm{T^n x} > \ep > 0$ for every $n \ge 0$. 
Hence for this vector $x$ \eqref{eq limit C-means are 0 general} does not hold. 
Therefore we have proved that (ii) $\Rightarrow$ (iii).

Finally, suppose that the isometry $S$ appears in the minimal model.
Then for some vector $\ell = (\ell_1, \ell_2) \in \mathcal{L}$, 
its second component $\ell_2 \in \mathcal{W}$ is not $0$. 
Therefore
\[
\begin{split}
& \lim_{n \to\infty} \frac{1}{k^{b+1}(n)}\sum _{j=0}^n  k^{b}(n-j) 
\big\|((B_a \otimes I_\mathfrak{D})\oplus S)^j \ell \big\|^2 \\
&= 
\lim_{n \to\infty} \frac{1}{k^{b+1}(n)}\sum _{j=0}^n  k^{b}(n-j) 
\big\|(B_a \otimes I_\mathfrak{D})^j \ell_1 \oplus S^j \ell_2 \big\|^2 \\
&= 
\lim_{n \to\infty} \frac{1}{k^{b+1}(n)}\sum _{j=0}^n  k^{b}(n-j) 
\big\|(B_a \otimes I_\mathfrak{D})^j \ell_1 \big\|^2
+ \lim_{n \to\infty} \frac{1}{k^{b+1}(n)}\sum _{j=0}^n  k^{b}(n-j) \big\| S^j \ell_2 \big\|^2.
\end{split}
\]
The second limit is $\norm{\ell_2}^2 \neq 0$ because of \eqref{eq S is Cap bounded}. 
Hence we obtain that (iii) $\Rightarrow$ (i).
\end{proof}

\begin{rem}
In the same way, we get that if $T$ is an $a$-contraction and $0<a\le 1$, then
\[
\liminf_{n \to \infty} \norm{T^n x} \le \|x\|.
\]
In particular, this lower limit is finite for any $x$.
\end{rem}

Since $\pweak{1}$ is just the set of all contractions on $H$,
$T\in \pweak{1}$ iff $T^*\in \pweak{1}$. However, this is no longer
true for $a\in (0,1)$.

\begin{prop}
If $a\in (0,1)$, then there is an operator $T\in \pweak{a}$ such that $T^*\notin \pweak{a}$.
\end{prop}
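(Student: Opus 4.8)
The plan is to exhibit a concrete $a$-contraction whose adjoint fails to be one, and the natural candidate is the backward shift itself. I would take $T=B_a$, the backward shift on $\CH_a$, and write $\al(t)=(1-t)^a$ throughout, so that $\al_n=k^{-a}(n)$ with $\al_0=1$ and $\al_n\le 0$ for $n\ge 1$ (valid since $0<a<1$). By Theorem~\ref{thm B_s a-contraction}(ii) applied with $s=a$ we have $B_a\in\pweak{a}$, so it only remains to show $S:=B_a^*\notin\pweak{a}$. Here I would work directly inside $\CH_a$, where $\|e_m\|^2=k^a(m)$ and the adjoint of the weighted backward shift acts by $B_a^*e_m=\frac{k^a(m)}{k^a(m+1)}e_{m+1}$. (Morally, $S$ is the multiplication $g(z)\mapsto zg(z)$ on $\CR_a$, as recalled in Subsection~\ref{subsec conseq} and Remark~\ref{rem duality Rk and Hk}, but the direct computation is cleaner.) Iterating the formula from $e_0\equiv 1$ gives $S^n e_0=\frac{k^a(0)}{k^a(n)}\,e_n=\frac{1}{k^a(n)}\,e_n$, hence
\[
\|S^n e_0\|^2=\frac{1}{k^a(n)^2}\,\|e_n\|^2=\frac{1}{k^a(n)}\qquad(\forall n\ge 0).
\]

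The heart of the argument is then a single scalar computation on the vector $e_0$. I would consider
\[
\sum_{n=0}^\infty \al_n\,\|S^n e_0\|^2=\sum_{n=0}^\infty \frac{\al_n}{k^a(n)}=1-\sum_{n=1}^\infty \frac{|\al_n|}{k^a(n)},
\]
and estimate the tail by two elementary facts about Ces\`aro numbers. First, by Proposition~\ref{prop properties of Cesaro numbers} the sequence $\{k^a(n)\}$ is strictly decreasing, since $k^a(n+1)=\frac{n+a}{n+1}\,k^a(n)$ with $a<1$; consequently $k^a(n)\le k^a(1)=a<1$, so that $1/k^a(n)>1$ for every $n\ge 1$. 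Second, since we are in the critical case $\al(1)=0$ and $\{\al_n\}$ is summable (as $\al\in A_W$), Abel's theorem yields $\sum_{n\ge 0}\al_n=\al(1)=0$, whence $\sum_{n\ge 1}|\al_n|=-\sum_{n\ge 1}\al_n=1$. Combining the two, the (a priori possibly infinite) sum $\Sigma:=\sum_{n\ge 1}|\al_n|/k^a(n)$ satisfies the strict inequality $\Sigma>\sum_{n\ge 1}|\al_n|=1$.

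Finally I would conclude by a dichotomy that sidesteps any split into the regimes $a\le 1/2$ and $a>1/2$. If $S\notin\weak{a}$, then $S\notin\pweak{a}$ trivially, because $\pweak{a}\subset\weak{a}$. Otherwise $S\in\weak{a}$, and by the definition of $\weak{a}$ (Proposition~\ref{prop equivalences for Adm alpha weak}) the series $\sum_n|\al_n|\,\|S^n e_0\|^2=\Sigma$ must converge, i.e.\ $\Sigma<\infty$; the displayed identity then reads
\[
\ip{\al(S^*,S)\,e_0}{e_0}=\sum_{n=0}^\infty \al_n\,\|S^n e_0\|^2=1-\Sigma<0,
\]
which contradicts $\al(S^*,S)\ge 0$. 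In either case $T^*=S=B_a^*\notin\pweak{a}$, as desired. The only point requiring care is precisely this convergence bookkeeping: for $a\le 1/2$ one has $\Sigma=+\infty$, so $S$ is not even in $\weak{a}$, while for $a>1/2$ the series converges to a value strictly exceeding $1$; the single cyclic vector $e_0$ handles both possibilities uniformly, so no separate case analysis is needed.
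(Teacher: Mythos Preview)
Your proof is correct and uses the same witness $T=B_a$ as the paper, but your argument for $B_a^*\notin\pweak{a}$ is genuinely different and more elementary. The paper observes that $\|B_a^{*n}e_0\|^2=1/k^a(n)\to\infty$ and then invokes Theorem~\ref{thm Ca to Cesaro bounded}: every operator in $\pweak{a}$ is quadratically $(C,b)$-bounded for $b>1-a$, which is clearly violated when the orbit of a single vector is unbounded. That route therefore leans on the model theorem (Theorem~\ref{thmBCHMc}) together with Lemma~\ref{quadraticShift}. You instead compute $\langle\al(S^*,S)e_0,e_0\rangle=1-\sum_{n\ge1}|\al_n|/k^a(n)$ directly and use only two soft scalar facts (the monotone decrease of $k^a(n)$ for $0<a<1$ and $\sum_{n\ge1}|\al_n|=1$) to force a strict sign, with your dichotomy absorbing both the divergent case $a\le 1/2$ and the convergent case $a>1/2$ at once. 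Your approach is self-contained and avoids the dilation-theoretic machinery; the paper's argument, by contrast, showcases how the ergodic consequences of the model immediately yield obstructions to membership in $\pweak{a}$.
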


\begin{proof}
Note that $B_a^*$ is a forward weighted shift such that $\|B_a^{*n}f_0\|\to\infty$ as
$n$ goes to $\infty$. So
$B_a\in \pweak{a}$, whereas its adjoint cannot belong to $\pweak{a}$, because
$B_a^*$ is not quadratically $(C,b)$-bounded
for any $b$ (see Lemma~\ref{quadraticShift}).
\end{proof}

It is natural to pose the following question.

\begin{que}
For which functions $\al$, satisfying Hypotheses~\ref{hypo-al}, is it true that
$T\in \pweak{\al}$ implies $T^* \in \pweak{\al}$?
\end{que}

It is so for $\al(t)=1-t$ and, more generally, for $\al(t)=1-t^n$, $n\ge 1$.
The authors do not know other examples.

\begin{rems}
\quad
\begin{itemize}
\item[\tn{(i)}]
If $T$ is an operator in $\pweak{a}$ with $0<a<1$, and $0<q<2$, then
by Theorem \ref{thm Ca to Cesaro bounded}
and Theorem \ref{th4.4}, it follows that $T$ is $(C,b,q)$-bounded
for all $b>\frac{q(1-a)}{2}$.
\item[\tn{(ii)}]
An $m$-isometry $T$, which is not an isometry, cannot be $(C,a,p)$-bounded, because
there are vectors $x$ such that the norms $\|T^nx\|$ go to infinity.
The possibility for these operators to have weaker ergodic properties, such as
the Ces\`aro boundedness and weak ergodicity, have been studied in \cite{BBMP19}.
\item[\tn{(iii)}]
Let $T$ be an operator in $\pweak{a}$ with $0<a<1$.
Using Theorem \ref{thm Ca to Cesaro bounded} (i) and Theorem \ref{th4.4}
(with $p=2$ and $q=1$) we
obtain that $T$ is $(C,b,1)$-bounded for every $b>(1-a)/2$.

By \cite[Corollary 3.1]{AB19}, we get that
$T$ is $(C,b)$-mean ergodic,
that is,
there exists 
\[
P_b x:=\lim_{n\to\infty}M^b_{T}(n)x,\quad x\in H.
\]
Therefore, by \cite[Theorem 3.3]{AGL19}, we have
\[
H={\rm {Ker}}(I-T)\oplus\overline{{\rm{Ran}}(I-T)}.
\]
In fact,
\[
{\rm {Ker}}(I-T)= {\rm{Ran}}P_{b}\ \text{ and }\  \overline{{\rm{Ran}}(I-T)}={\rm {Ker}}P_b.
\]
Also note that
\[
M^b_{T}(n) x=x \text{ for } x\in  {\rm {Ker}}(I-T), 
\text{ and  } 
\lim_{n\to\infty}M^b_{T}(n) x= 0 \text{ for } x\in  \overline{{\rm{Ran}}(I-T)}.
\]

Let now $0<\gamma<1,$ by \cite[Proposition 4.8 and Remark 4.9]{AGL19}, 
one can define a bounded operator $(I-T)^\ga$ by means 
of a certain functional calculus, and
\[
{\rm {Ker}}(I-T)={\rm {Ker}}(I-T)^{\gamma},\quad \overline{{\rm{Ran}}(I-T)}
=\overline{{\rm{Ran}}(I-T)^{\gamma}},
\]
with ${\rm{Ran}}(I-T)\subseteq {\rm{Ran}}(I-T)^{\gamma}$.
Furthermore if $\gamma < 1-b,$
for $x\in \overline{{\rm{Ran}}(I-T)},$
\[
x \in {\rm{Ran}}(I-T)^{\gamma} 
\Longleftrightarrow 
\sum_{n=1}^{\infty}\frac{1}{n^{1-\gamma}}T^n x  \ \text{ converges,}
\]
see \cite[Theorem  9.2]{AGL19}.
\item[\tn{(iv)}]
By \cite[Theorem 3.1]{AB19}, if $T$ is an operator in $\pweak{a}$
with $0<a<1$ and $b>(1-a)/2$, then
\[
\lim_{n \to \infty}\|M^{b}_{T}(n+1)-M^{b}_{T}(n)\|=0.
\]
\end{itemize}
\end{rems}

\section*{Acknowledgments}

The authors thank T. Bhattacharyya and N. Nikolski 
and D. Schillo for their useful remarks,
and A.~Bonilla for his advice concerning Theorem~\ref{thm Cap equals C1p}.
The first author has been partly supported by Project MTM2016-77710-P,
DGI-FEDER, of the MCYTS, Project E26-17R, D.G. Arag\'on, 
and Project for Young Researchers, Fundación Ibercaja and Universidad de Zaragoza, Spain.
The second author has been partially supported by
La Caixa-Severo Ochoa grant
(ICMAT Severo Ochoa project SEV-2011-0087, MINECO).
Both second and third authors acknowledge partial support by
Spanish Ministry of Science, Innovation and
Universities (grant no. PGC2018-099124-B-I00) and
the ICMAT Severo Ochoa project SEV-2015-0554 of the Spanish Ministry of Economy and
Competitiveness of Spain and the European Regional Development
Fund, through the ``Severo Ochoa Programme for Centres of Excellence
in R$\&$D''.
Both second and third authors also acknowledge
financial support from the Spanish Ministry of 
Science and Innovation, through the ``Severo Ochoa Programme for 
Centres of Excellence in R$\&$D'' (SEV-2015-0554) and from the Spanish 
National Research Council, through the ``Ayuda extraordinaria a 
Centros de Excelencia Severo Ochoa'' (20205CEX001).

\bibliographystyle{siam}
\bibliography{biblio_ABY_fin_2A}

\end{document}